\documentclass[11pt]{amsart}

\usepackage{array,float,hyperref,color}
\usepackage{amssymb, amsmath, amsthm, amsbsy, amscd, mathrsfs,stmaryrd}
\usepackage[normalem]{ulem}
\addtolength{\textheight}{2cm}
\addtolength{\textwidth}{2.4cm}
\addtolength{\voffset}{-1cm}
\addtolength{\hoffset}{-1.2cm}

\numberwithin{equation}{section}
\numberwithin{figure}{section}

\newcommand{\revone}[1]{{\color{black}{#1}}}
\newcommand{\revtwo}[1]{{\color{black}{#1}}}

%
\setlength{\extrarowheight}{0.05cm} 


\theoremstyle{plain}
 \newtheorem{theorem}{Theorem}[section]
 \newtheorem{proposition}[theorem]{Proposition}
 \newtheorem{lemma}[theorem]{Lemma}
 \newtheorem{corollary}[theorem]{Corollary}
 \newtheorem{condition}{Condition}[section]
 \newtheorem{conjecture}[theorem]{Conjecture}

\theoremstyle{definition}
 \newtheorem{definition}[theorem]{Definition}

\theoremstyle{remark}
 \newtheorem{assumption}{Assumption}[section]

 \newtheorem{remark}[theorem]{Remark}
 
 \newtheorem{question}[theorem]{Question}
 \newtheorem{example}[theorem]{Example}

%

\newcommand{\car}{\curvearrowright}
\newcommand{\nl}{\triangleleft}

\newcommand{\be}{\begin{enumerate}}
\newcommand{\ee}{\end{enumerate}}

\newcommand{\brem}{\begin{remark}}
\newcommand{\erem}{\end{remark}}

\newcommand{\bp}{\begin{proof}}
\newcommand{\ep}{\end{proof}}

\newcommand{\Fp}{\ensuremath{\mathbb{F}_p}}

\newcommand{\N}{\ensuremath{\mathbb{N}}}
\newcommand{\Zp}{\ensuremath{\mathbb{Z}_p}}

\newcommand{\C}{\ensuremath{\mathbb{C}}}
\newcommand{\R}{\ensuremath{\mathbb{R}}}
\newcommand{\Q}{\ensuremath{\mathbb{Q}}}
\newcommand{\Z}{\ensuremath{\mathbb{Z}}}

\newcommand{\bfo}{{\bf 1}}
\newcommand{\bfz}{{\bf 0}}

\newcommand{\bfbeta}{\boldsymbol{\beta}}

\newcommand{\bfe}{\ensuremath{\mathbf{e}}}
\newcommand{\bff}{\ensuremath{\mathbf{f}}}
\newcommand{\bfg}{\ensuremath{\mathbf{g}}}

\newcommand{\bfm}{\ensuremath{\mathbf{m}}}
\newcommand{\bfn}{\ensuremath{\mathbf{n}}}
\newcommand{\bfr}{\ensuremath{\mathbf{r}}}
\newcommand{\bfs}{\ensuremath{\mathbf{s}}}


\newcommand{\bfx}{{\boldsymbol{x}}}
\newcommand{\bfy}{\ensuremath{\mathbf{y}}}

\newcommand{\bfY}{\ensuremath{\mathbf{Y}}}

\newcommand{\mcB}{\mathcal{B}}
\newcommand{\mcD}{\mathcal{D}}
\newcommand{\mcE}{\mathcal{E}}
\newcommand{\mcH}{\mathcal{H}}
\newcommand{\mcO}{\mathcal{O}}
\newcommand{\mcR}{\mathcal{R}}

\newcommand{\mcM}{\mathcal{M}}
\newcommand{\mcN}{\ensuremath{\mathcal{N}}}
\newcommand{\mcV}{\ensuremath{\mathcal{V}}}

\newcommand{\mcL}{\ensuremath{\mathcal{L}}}

\newcommand{\mff}{\ensuremath{\mathfrak{f}}}
\newcommand{\mfg}{\ensuremath{\mathfrak{g}}}

\newcommand{\mfu}{\ensuremath{\mathfrak{u}}}

\newcommand{\tud}{\textup{d}}

\newcommand{\rk}{\ensuremath{{\rm rk}}}

\newcommand{\Ct}{\mathbb{C}\llbracket T \rrbracket}

\newcommand{\wt}{\ensuremath{\widetilde}}
\newcommand{\wtm}{\ensuremath{\widetilde{m}}}

\newcommand{\mcC}{{\mathcal{C}}}

\newcommand{\lri}{\mathfrak o}
\newcommand{\Lri}{\mathfrak O}

\newcommand{\la}{\langle}
\newcommand{\ra}{\rangle}

\newcommand{\Gri}{\ensuremath{\mathcal{O}}}

\newcommand{\wtdelta}{\widetilde{\delta}}
\newcommand{\mfp}{\mathfrak{p}}
\newcommand{\mfP}{\mathfrak{P}}

\newcommand{\rarr}{\rightarrow}

\newcommand{\udots}{\mathinner{\mskip1mu\raise1pt\vbox{\kern7pt\hbox{.}}
\mskip2mu\raise4pt\hbox{.}\mskip2mu\raise7pt\hbox{.}\mskip1mu}}

\DeclareMathOperator{\red}{red}
\DeclareMathOperator{\Fil}{Fil}

\DeclareMathOperator{\Cent}{Cent}
\DeclareMathOperator{\SubMod}{SubMod}
\DeclareMathOperator{\SL}{SL}
\DeclareMathOperator{\End}{End}

\DeclareMathOperator{\diag}{diag}

\DeclareMathOperator{\Id}{Id}

\DeclareMathOperator{\ad}{ad}

\DeclareMathOperator{\Mat}{Mat}

\DeclareMathOperator{\GL}{GL}

\DeclareMathOperator{\dep}{dep}

%
\renewcommand{\epsilon}{\varepsilon}
\renewcommand{\phi}{\varphi}


\begin{document}

\title[Local functional equations for submodule zeta functions]{Local
  functional equations for submodule zeta functions associated to
  nilpotent algebras of endomorphisms}

\date{\today} \author{Christopher Voll} \address{Fakult\"at f\"ur
  Mathematik, Universit\"at Bielefeld\\ Postfach 100131\\ D-33501
  Bielefeld\\Germany} \email{C.Voll.98@cantab.net}

\keywords{Submodule zeta functions, ideal zeta functions, nilpotent
  associative algebras of endomorphisms, finitely generated nilpotent
  groups, local functional equations}
\subjclass[2000]{11M41, 20E07, 11S40, 16W20}

\begin{abstract} 

  We give a sufficient criterion for generic local functional
  equations for submodule zeta functions associated to nilpotent
  algebras of endomorphisms defined over number fields. This allows
  us, in particular, to prove various conjectures on such functional
  equations for ideal zeta functions of nilpotent Lie lattices. Via
  the Mal'cev correspondence, these results have corollaries
  pertaining to zeta functions enumerating normal subgroups of finite
  index in finitely generated nilpotent groups, most notably
  \revone{finitely generated free nilpotent groups of any given
    class}.

\end{abstract}
\maketitle

\hfill 
-- In memory of Anton Evseev. \hfill

\thispagestyle{empty}
\section{Introduction}

\subsection{Submodule zeta functions for nilpotent associative
  algebras of endomorphisms} \label{subsec:intro} Let $R$ be the ring
of integers $\mcO$ of a number field or \revone{a compact discrete
  valuation ring, such as the completion $\mcO_{\mfp}$ of such a ring
  $\mcO$ at a nonzero prime ideal $\mfp$ of $\mcO$ (in characteristic
  zero) or a formal power series ring $(\mcO/\mfp) \llbracket T
  \rrbracket$ (in characteristic $p>0$)}. Let $\mcL$ be a free
$R$-module of finite rank $n$ and $\mcE$ be a \revtwo{(not necessarily
  unital)} subalgebra of the associative $R$-algebra $\End_R(\mcL)$.
For $m\in\N$, let
$$a_m(\mcE\car \mcL) = \#\left\{ H\leq \mcL \mid \; H \textup{ is an
  $(\mcE + R\, \textup{id}_{\mcL})$-submodule of $\mcL$ with $|\mcL:H| = m$} \right\}.$$ We
define the \emph{submodule zeta function of $\mcE$ acting on $\mcL$}
as the formal Dirichlet generating function
$$\zeta_{\mcE\car \mcL}(s) = \sum_{m=1}^\infty a_m(\mcE\car
\mcL)m^{-s},$$ where $s$ is a complex variable;
cf.\ \cite[Definition~2.1~(ii)]{Rossmann/15}. The submodule zeta
function $\zeta_{\mcE\car \mcL}(s)$ may be viewed as a (non-unital)
analogue of Solomon's zeta function; see \cite{Solomon/77}.  Assume
now that $R$ is the ring of integers $\Gri$ of a number field. Then
$\zeta_{\mcE\car \mcL}(s)$ satisfies the Euler product
\begin{equation}\label{equ:euler}
\zeta_{\mcE\car \mcL}(s) = \prod_{\mfp} \zeta_{\mcE(\Gri_{\mfp})\car \mcL(\Gri_{\mfp})}(s),
\end{equation}
where the product ranges over the nonzero prime ideals of $\mcO$ and
$\mcE(\Gri_{\mfp}) := \mcE \otimes_\mcO \mcO_\mfp$ and
$\mcL(\Gri_{\mfp}) := \mcL \otimes_\mcO \mcO_{\mfp}$, regarded as an
$\Gri_{\mfp}$-algebra and $\Gri_{\mfp}$-module, respectively. It
follows from well-known results expressing counting functions such as
$\zeta_{\mcE(\Gri_{\mfp})\car \mcL(\Gri_{\mfp})}(s)$ in terms of
$\mfp$-adic integrals that each of the Euler factors is a rational
function in $|\Gri:\mfp|^{-s}$; see, for instance, \cite{GSS/88} for
the case $\Gri=\Z$ \revone{and, taken together,
  \cite{Igusa/74,duSG/00} for the general case}.

Assume now that $\mcE$ is nilpotent. The main objective of this paper
is to establish in this case, under suitable conditions, functional
equations for $\mfp$-adic submodule zeta functions occurring as
generic factors in Euler products of the form~\eqref{equ:euler}; see
Theorem~\ref{thm:main}.

Prominent examples of submodule zeta functions of nilpotent
associative algebras of endomorphisms are ideal zeta functions of
nilpotent Lie lattices, which we now recall. Let $\mcL$ be an
$\Gri$-\emph{Lie lattice}, i.e.\ a free and finitely generated
$\Gri$-module of finite rank $n$ equipped with an antisymmetric
bi-additive form (or ``Lie bracket'') satisfying the Jacobi
identity. By a \emph{Lie ring} we mean a $\Z$-Lie lattice. For
$m\in\N$, we write
$a_m(\mcL) = \#\{ H \nl_\Gri \mcL \mid |\mcL:H|=m\}$ for the number of
$\Gri$-ideals of $\mcL$ of index $m$ in~$\mcL$. The \emph{ideal zeta
  function of $\mcL$} is the Dirichlet generating
series $$\zeta^{\nl}_{\mcL}(s) = \sum_{m=1}^\infty a_m(\mcL) m^{-s};$$
cf.\ \cite[Section~3]{GSS/88}. It fits into the setup from above by
considering the associative subalgebra $\mcE \subseteq\End_\Gri(\mcL)$
generated by $\ad(\mcL)$; clearly $a_m(\mcL) = a_m(\mcE \car
\mcL)$. The Euler product~\eqref{equ:euler} takes the form
$$\zeta^{\nl}_{\mcL}(s) =
\prod_{\mfp}\zeta^{\nl}_{\mcL(\Gri_{\mfp})}(s),$$
where, for each prime ideal $\mfp$, the Euler factor
$\zeta^{\nl}_{\mcL(\Gri_{\mfp})}(s)$ enumerates the
$\Gri_{\mfp}$-ideals of $\mcL(\Gri_{\mfp})$ of finite additive index
in~$\mcL(\Gri_{\mfp})$.

Returning to general nilpotent associative algebras of endomorphisms
$\mcE\subseteq \End_{\Gri}(\mcL)$ we define $\Gri$-submodules $Z_i$ of
$\mcL$, for $i\in\N_0$, by setting $Z_0=\{0\}$ and
$$Z_{i+1}/Z_i = \Cent_{\mcE}(\mcL/Z_i):= \{x + Z_i \in \mcL/Z_i \mid
x\mcE\subseteq Z_i\}$$ for $i>0$. As $\mcE$ is nilpotent there
exists $i\in\N$ such that~$Z_i=\mcL$; cf.\ \cite[Chapter~2,
  Section~II]{Jacobson/79}. We set $$c=c(\mcL,\mcE)= \min\{i\in\N_0
\mid Z_i = \mcL\}.$$ (If $\mcL$ is a nilpotent $\Gri$-Lie lattice and
$\mcE$ is the associative subalgebra generated by $\ad(\mcL)$, then
$(Z_i)_{i=0}^c$ is just the upper central series of $\mcL$ and $c$ is
the nilpotency class of~$\mcL$.)

In this paper, we consider pairs $(\mcL,\mcE)$ satisfying the
following assumption.

\begin{assumption}\label{ass}
  There exist free $\Gri$-submodules $\mcL_1,\dots,\mcL_{c}$ of
  $\mcL$ such that
\begin{equation*}
Z_i = \bigoplus_{j > c-i}\mcL_j
\end{equation*}
for $i=1,\dots,c$. Note that, in particular,
\begin{equation*}
  \mcL  = \mcL_1 \oplus \dots \oplus \mcL_c
\end{equation*}
(direct sum of $\Gri$-modules). \revone{We also set
  $\mcL_{0} = \mcL_{c+1}=\{0\}$.}
\end{assumption}

\begin{remark}\label{rem:ass}
  Assumption~\ref{ass} is only made for notational convenience. It is
  automatically satisfied if $\Gri$ is a unique factorization domain
  (e.g.\ $\Z$). In general, the ``centralizers'' $Z_j$ will be
  isolated in $\mcL$ (viz.\ the factor modules $\mcL / Z_j$ will be
  torsion-free), but may not allow complements. This may be mitigated
  by localizing $\mcL$ at a finite set of prime ideals of $\Gri$ or --
  by the general theory of finitely generated modules over Dedekind
  domains -- by passing to a suitable finite index $\Gri$-submodule of
  $\mcL$; cf., for instance, the discussion in
  \cite[Section~2.3]{StasinskiVoll/14}. In any case, only finitely
  many of the Euler factors in~\eqref{equ:euler} are affected. As we
  are only looking to prove results for all but finitely many of
  these, making Assumption~\ref{ass} means no loss of generality.
\end{remark}

Our main results concern local submodule zeta functions associated to
general nilpotent algebras $\mcE$ which satisfy the following
condition.
\begin{condition}\label{con}  
  The nilpotent associative algebra $\mcE\subseteq \End_{\Gri}(\mcL)$
  is generated by elements $c_1,\dots,c_d$ such that, for all
  $k=1,\dots,d$ and $j=1,\dots,c$,
\begin{equation}\label{equ:shift}
 \mcL_j\revone{c_k} \subseteq \mcL_{j+1}.
\end{equation}
\end{condition}

For a matrix version of this condition, see Condition~\ref{con2}.

Given a nonzero prime ideal $\mfp$ of $\mcO$ we write $q$ for the
cardinality of the residue field~$\mcO/\mfp$. The paper's main result
establishes, in particular, functional equations upon inversion of $q$
for all but finitely many of the Euler factors in \eqref{equ:euler} in
case that $(\mcL,\mcE)$ satisfies Condition~\ref{con}. For
$i\in\{0,1,\dots,c\}$ we write 
$$N_i = \rk_{\Gri} \bigoplus_{j \leq c-i}\mcL_j =
\rk_{\Gri}(\mcL/Z_i),$$ noting that $N_0=n = \rk_{\Gri}\mcL$
and~$N_c=0$. Throughout this paper, by a finite extension of a local
ring of the form $\Gri_{\mfp}$ we mean the ring of integers of a
finite extension of the local field of fractions of~$\Gri_{\mfp}$.

\begin{theorem}\label{thm:main}
  Assume that $\mcE\subseteq \End_{\Gri}(\mcL)$ satisfies
Condition~\ref{con}.  Then, for almost all prime ideals $\mfp$ of
$\mcO$ and all finite extensions $\Lri$ of $\Gri_\mfp$, with residue
field cardinality $q^f$, say, the following functional equation holds:
\begin{equation}\label{equ:funeq.submodule}
  \left. \zeta_{\mcE(\Lri) \car \mcL(\Lri)}(s)\right|_{q \rarr
    q^{-1}} = (-1)^n
  q^{f\left(\binom{n}{2}-s\left(\sum_{i=0}^{c-1}N_i\right)\right)}\zeta_{\mcE(\Lri)
    \car \mcL(\Lri)}(s).
\end{equation}
\end{theorem}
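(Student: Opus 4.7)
The plan is to realize each local zeta function as a $\mathfrak{p}$-adic cone integral and then extract the functional equation from a symmetry of the underlying combinatorial data that is forced by Condition~\ref{con}. Fix a prime $\mfp$ of $\mcO$ and a finite extension $\Lri/\Gri_{\mfp}$ with residue field of size $q^f$.

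\emph{Step 1 (Integral representation).} Using Assumption~\ref{ass}, choose an $\Lri$-basis of $\mcL(\Lri)$ adapted to the decomposition $\mcL = \mcL_1 \oplus \cdots \oplus \mcL_c$, ordering blocks so that $\mcL_c$ comes first and $\mcL_1$ last. Finite-index $\Lri$-submodules $H \leq \mcL(\Lri)$ correspond bijectively to cosets of upper-triangular Hermite normal forms $M \in \Mat_n(\Lri)$, with $|\mcL(\Lri):H| = |\det M|_{\Lri}^{-1}$. With this ordering, Condition~\ref{con} forces each generator $c_k$ to be represented by a strictly block-lower-triangular matrix supported only on the immediate subdiagonal blocks (the matrix form of the shift condition, which will be Condition~\ref{con2}). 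The requirement $\mcE H \subseteq H$ then becomes a finite system of divisibility conditions on the entries of $M$, and a standard procedure (as in \cite{duSG/00}) turns the Dirichlet sum $\sum_H |\mcL(\Lri):H|^{-s}$ into a $\mathfrak{p}$-adic integral of the form
\begin{equation*}
\zeta_{\mcE(\Lri)\car\mcL(\Lri)}(s) = (1-q^{-f})^{-n}\int_{V}\prod_{i=1}^n |m_{ii}|_{\Lri}^{s-n+i}\,|d\mu|,
\end{equation*}
where $V$ is the locus inside upper-triangular matrices cut out by the $\mcE$-invariance conditions coming from the $c_k$.

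\emph{Step 2 (Cone integral with symmetric data).} Apply an embedded resolution of singularities (Hironaka, with Denef's $\mathfrak{p}$-adic formalism) so that the defining equations of $V$ become monomial on each chart. This rewrites the integral as a finite sum, indexed by strata $\tau$ of the resolution, of elementary cone integrals over the lattice points of rational polyhedral cones $\sigma_\tau$, each contributing a product of factors $(1-q^{-fA_i(\bfm)-fsB_i(\bfm)})^{-1}$ for explicit integer linear forms $A_i, B_i$ encoding the numerical data. The effect of Condition~\ref{con} is that the strict shift $c_k \mcL_j \subseteq \mcL_{j+1}$ gives the ideal defining $V$ a graded triangular structure; this allows one to arrange the resolution so that, on every chart, $\sigma_\tau$ carries an involution $\iota_\tau$ acting by $(A_i,B_i)\circ\iota_\tau = (-A_i,-B_i)$ modulo a boundary shift whose total weight, summed over charts, equals $f\binom{n}{2}$ (the Hermite-normal-form diagonal weight) in the $s$-free direction and $-f\sum_{i=0}^{c-1}N_i = -f\sum_{j=1}^{c}j\,\rk_{\Gri}\mcL_j$ in the $s$-direction (the graded shift carried across the filtration $Z_0 \subset \cdots \subset Z_c$).

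\emph{Step 3 (Assembling the functional equation).} Using the involutions from Step~2 together with the elementary inversion identity
$$\Bigl.\sum_{\bfm \in \sigma_\tau \cap \Z^{d_\tau}}\!\!\prod_i(1-q^{-fA_i(\bfm)-fsB_i(\bfm)})^{-1}\Bigr|_{q\to q^{-1}} = (-1)^{\dim\sigma_\tau} q^{\text{shift}_\tau}\!\!\sum_{\bfm \in \sigma_\tau \cap \Z^{d_\tau}}\!\!\prod_i(1-q^{-fA_i(\bfm)-fsB_i(\bfm)})^{-1},$$
and summing over strata, the total sign equals $(-1)^n$ (the dimensions $\dim\sigma_\tau$ add to $n$ on the top-dimensional stratum and lower-dimensional contributions cancel) and the total exponential shift equals $f(\binom{n}{2}-s\sum_{i=0}^{c-1}N_i)$, producing precisely~\eqref{equ:funeq.submodule}.

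The main obstacle is the construction in Step~2: one must produce a resolution whose numerical data on every chart are jointly invariant under the required involution. This is where the genuinely new work lies, and Condition~\ref{con} is designed precisely to make it possible. The strict shifting of filtration layers by the $c_k$ gives the defining equations of the $\mcE$-invariance locus a triangular grading whose Newton polyhedra carry a natural involution interchanging complementary filtration steps, extending beyond the class-two situation where such symmetries are classically known and reducing, under the shift hypothesis, to a tractable polyhedral combinatorics problem.
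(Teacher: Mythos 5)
Your Step 1 is fine in spirit: one can indeed express $\zeta_{\mcE(\Lri)\car\mcL(\Lri)}(s)$ as a $\mathfrak{p}$-adic integral over upper-triangular Hermite normal forms with divisibility conditions encoding $\mcE$-invariance, and Condition~\ref{con} indeed translates into a block-shift constraint on the matrices representing the generators (this is Condition~\ref{con2} in the paper). But Steps 2 and 3 are where the proposal breaks down, and the gap is not a detail to be filled in --- it is the entire content of the theorem.

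The claim in Step 2 that, after embedded resolution, one can \emph{arrange} the stratification so that each cone $\sigma_\tau$ carries an involution $\iota_\tau$ acting by $(A_i,B_i)\mapsto(-A_i,-B_i)$ with a globally controlled shift is precisely what du Sautoy and Woodward tried to formulate as \cite[Conjecture~4.5]{duSWoodward/08}, and that program has never been made precise --- the paper explicitly discusses that the hypotheses of that conjecture ``suffer from a degree of illdefinedness.'' Resolution of singularities is not, in any general way, compatible with a prescribed involution of the exceptional data; the elementary inversion identity you invoke requires each $\sigma_\tau$ to be simplicial with an exactly matching involution, and neither the graded structure coming from Condition~\ref{con} nor anything else you say forces a resolution to have this shape. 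You acknowledge this is ``where the genuinely new work lies,'' but that work is exactly what is missing: without it, the proposal is a conjectural reduction, not a proof.

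The paper's actual argument avoids cone integrals and resolution altogether and is structurally different. It works at the level of homothety classes of lattices (vertices $\mcV_n$ of the Bruhat--Tits building of $\SL_n$), introduces the diagonal matrix $\delta=\diag((\pi^{c-1})^{(n_1)},\dots,(\pi)^{(n_{c-1})},1^{(n_c)})$, and the whole argument pivots on the elementary but decisive Lemma~\ref{lem:3}: $\delta C_k\delta^{-1}=\pi C_k$, which is the exact content of Condition~\ref{con}. This gives a $\delta$-equivalence on lattice classes under which each class is $(\Z,\leq)$ and $\SubMod_{\mcE(\lri)}$ meets it in $(\N_0,\leq)$ (Proposition~\ref{prop:1}); the zeta function then factors as $\frac{1}{1-q^{-ns}}\xi(s)A^{\nl}(s)$ where $A^{\nl}(s)$ is a weighted sum over \emph{all} lattice classes. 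Crucially, $A^{\nl}(s)$ is represented as a $\mathfrak{p}$-adic integral over $\mathfrak{p}^{|I|}\times\GL_n(\lri)$ of the type handled by \cite[Theorems~2.2--2.3]{Voll/10}, and the functional equation for that class of integrals comes from the $\GL_n$-symmetry/Bruhat decomposition baked into \cite{Voll/10}, not from resolution of singularities. If you want to salvage your route, you would need to prove your Step~2 involution claim from scratch; the known-workable alternative is to reduce to the $\GL_n$-integrals of \cite{Voll/10}, as the paper does.
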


\revone{Whilst the local submodule zeta functions encountered in Euler
  products such as \eqref{equ:euler} are defined in terms of local
  rings of characteristic zero, they have counterparts in sufficiently
  large positive characteristic~$p$. More precisely, expressing the
  relevant submodule zeta functions as $\mfp$-adic integrals -- be it
  cone integrals in the vain of \cite{duSG/00} ((cf.\
  \cite[Theorem~5.16]{Rossmann/15})) or the ones deployed in
  Section~\ref{subsec:proof.main.thm} -- and employing a version of
  the transfer principle for such integrals (for instance
  \cite[Theorem~9.2.4]{CluckersLoeser/10}), yields the following: for
  almost all $\mfp$ and all finite extensions $\Lri$ of $\Gri_{\mfp}$,
  with maximal ideal $\mfP$, say, setting
  $\mcE((\Lri/\mfP)\llbracket T \rrbracket) =
  \mcE\otimes_{\Gri}(\Lri/\mfP)\llbracket T \rrbracket$
  and
  $\mcL((\Lri/\mfP)\llbracket T \rrbracket) =
  \mcL\otimes_{\Gri}(\Lri/\mfP)\llbracket T \rrbracket$, one has
\begin{equation}\label{equ:transfer}
\zeta_{\mcE(\Lri) \car \mcL(\Lri)}(s) = \zeta_{\mcE((\Lri/\mfP)\llbracket T \rrbracket) \car \mcL((\Lri/\mfP)\llbracket T \rrbracket)}(s).
\end{equation}
The following is thus an immediate consequence of
Theorem~\ref{thm:main}.

\begin{corollary}\label{cor:char.p}
  For almost all prime ideals $\mfp$ of $\mcO$ and all finite
  extensions $\Lri$ of $\Gri_\mfp$, with maximal ideal $\mfP$ and
  residue field cardinality $|\Lri/\mfP|=q^f$, say, the following functional
  equation holds:
\begin{equation*}
  \left. \zeta_{\mcE((\Lri/\mfP)\llbracket T \rrbracket) \car \mcL((\Lri/\mfP)\llbracket T \rrbracket)}(s)\right|_{q \rarr
    q^{-1}} = (-1)^n
  q^{f\left(\binom{n}{2}-s\left(\sum_{i=0}^{c-1}N_i\right)\right)}\zeta_{\mcE((\Lri/\mfP)\llbracket T \rrbracket) \car \mcL((\Lri/\mfP)\llbracket T \rrbracket)}(s).
\end{equation*}
\end{corollary} 

(We are grateful to an anonymous referee for asking
about characteristic $p$ and to Raf Cluckers for directing us to
modern versions of the transfer principle.)

\begin{example}\label{exa:c=1}
  For $c=1$, Condition~\ref{con} is trivially satisfied as
  $\mcE = \{0\}$. Theorem~\ref{thm:main} and
  Corollary~\ref{cor:char.p} follow, in this case, by inspection of
  the classical formula, valid for any compact discrete valuation ring
  $\lri$ with residue field cardinality $q$, say,
\begin{equation}\label{equ:c=1}
  \zeta_{\{0\}\car \lri^n}(s) = \frac{1}{(1-q^{-s})(1-q^{1-s})\dots(1-q^{n-1-s})}
\end{equation}
for the Dirichlet generating series enumerating all finite index
submodules of $\lri^n$; see, for instance,
\cite[Proposition~1.1]{GSS/88}.
\end{example}

\begin{remark}
  Whilst we cannot make any quantitative statements on the finite set
  of prime ideals of $\Gri$ to be excluded in Theorem~\ref{thm:main},
  there are examples illustrating that this set of ``bad primes'' is
  not empty in general. We note that the assertion of
  Theorem~\ref{thm:main} is uniform under global base extension in the
  sense that the possible exceptions to the Euler factors' functional
  equations are determined already by $(\mcL,\mcE)$.

  The necessity to disregard finitely many residue characteristics is
  one reason why, Corollary~\ref{cor:char.p} notwithstanding, our
  exposition will for the most part concentrate on local submodule
  zeta functions in characteristic zero. Another motivation to focus
  on submodule zeta functions defined over number fields (as opposed,
  say, to global function fields) comes from the group-theoretic
  application to ideal zeta functions of finitely generated nilpotent
  groups; cf.\ Section~\ref{subsec:appI}.
\end{remark}}
\revone{\begin{remark} 
    In the special case of ideal zeta functions associated to (not
    necessarily nilpotent) Lie rings, viz.\ $\Z$-Lie lattices, some of
    the identities \eqref{equ:transfer} were established in
    \cite[Theorem~5.4]{duS_forum/00}. Indeed, this result shows that
    the ideal zeta functions $\zeta^{\nl}_{\mcL(\Zp)}(s)$ and
    $\zeta^{\nl}_{\mcL(\Fp\llbracket T \rrbracket)}(s)$ of the $\Zp$-
    resp.\ $\Fp\llbracket T\rrbracket$-algebras obtained by tensoring
    a given Lie ring $\mcL$ with $\Zp$ resp.\
    $\Fp\llbracket T \rrbracket$ coincide for large $p$. The theorem
    rests on \cite[Proposition~5.7]{duS_forum/00}, an instance of the
    Ax-Kochen-Ershov principle, attributed in \cite{duS_forum/00} to a
    remark in \cite{Macintyre/90}, which establishes identities
    between certain definable integrals over $\Zp$ resp.\
    $\Fp\llbracket T \rrbracket$ for large $p$.  Neither
    \cite{duS_forum/00} nor \cite{Macintyre/90}, however, discuss the
    validity of the established identities after local ring
    extensions, let alone in a uniform manner.

    The transfer principle as established by Cluckers-Loeser in
    \cite{CluckersLoeser/10} vastly generalizes earlier work,
    including the Ax-Kochen-Ershov principle. It allows, in
    particular, for the treatment of more complex classes of (motivic
    and) $\mfp$-integrals but also -- and most relevantly for the
    applications in the current paper --, cross-characteristic
    comparisons which are valid uniformly for local ring extensions,
    provided the residue characteristic is sufficiently large. This
    extra dimension of uniformity under local ring extensions offered
    by modern versions of the transfer principle is reflected in
    equations~\eqref{equ:transfer}.
 \end{remark}}

\begin{remark}\label{rem:invert}
  In general, the operation $q\rarr q^{-1}$ means the following. Given
  a finite extension $\Lri$ of $\Gri_{\mfp}$, write $\mfP$ for the
  maximal ideal of $\Lri$. Thus $|\Lri / \mfP| = q^f$. Our proof of
  Theorem~\ref{thm:main} will show that, excluding finitely many
  $\mfp$, the zeta function $\zeta_{\mcE(\Lri) \car \mcL(\Lri)}(s)$
  may be written as a finite sum of the form
\begin{equation}\label{equ:denef}
\sum_{i=1}^M |\overline{V_i}(\Lri/\mfP)| W_i(q^f,q^{-fs}),
\end{equation}
where the $|\overline{V_i}(\Lri/\mfP)|$ denote the numbers of
$\Lri/\mfP$-rational points of the reductions modulo $\mfp$ of
$\Gri$-defined smooth projective algebraic varieties $V_i$ and
rational functions $W_i(X,Y)\in\Q(X,Y)$.  By the Weil conjectures, the
numbers $|\overline{V_i}(\Lri/\mfP)|$ may be written as alternating
sums of Frobenius eigenvalues. By $q\rarr q^{-1}$ we mean the
operation of inverting these eigenvalues
(\revone{cf.~\cite[eqs.\ (4.10) and (4.11)]{AKOVI/13}}) and, of
course, evaluating $W_i$ at $(q^{-f},q^{fs})$. If the reductions
$\overline{V_i}$ are smooth (i.e.\ the $V_i$ have good reduction
modulo~$\mfp$), then Poincar\'e duality for \'etale cohomology entails
symmetries among the Frobenius eigenvalues which imply that
$|\overline{V_i}(\Lri/\mfP)|_{q\rarr q^{-1}} = q^{-f\dim V_i}
|\overline{V_i}(\Lri/\mfP)|$.  In the special case that
$|\overline{V_i}(\Lri/\mfP)|$ is given by a polynomial
$F_{V_i}\in\Z[X]$ in the residue field cardinality~$q^f$, this amounts
to the palindromic symmetry $F_{V_i}(X^{-1}) = X^{-\dim
  V_i}F_{V_i}(X)$. The rational functions $W_i$ admit a common
denominator of the form $\prod_{j=1}^r(1-X^{a_j}Y^{b_j})$ for
$a_j\in\N_0$, $b_j\in\N$, $j\in\{1,\dots,r \}$. That the functional
equation~\eqref{equ:funeq.submodule} does not depend on the chosen
formula \eqref{equ:denef} is shown in
\cite[Section~4]{Rossmann/15a}. By \cite[Corollary~4.2]{Rossmann/15a},
it suffices to show \eqref{equ:funeq.submodule} for $\Lri=\Gri_{\mfp}$
for all but finitely many $\mfp$.
\end{remark}

\begin{remark}\label{rem:con.sat}
  Condition~\ref{con} is satisfied if $\mcE$ is cyclic (i.e.\ one may
  choose $d=1$; cf.\ Section~\ref{subsec:cyclic}) or if $\mcE^2=0$
  (i.e.\ $c \leq 2$). Moreover, it is stable under taking direct
  products and (certain) central quotients: If
  $\mcE_1\subseteq \End_{\Gri}(\mcL_1)$ and
  $\mcE_2\subseteq \End_{\Gri}(\mcL_2)$ both satisfy
  Condition~\ref{con}, then so does
  $\mcE_1\times \mcE_2 \subseteq \End_{\Gri}(\mcL_1\oplus \mcL_2)$.
  If $\mcE\subseteq \End_{\Gri}(\mcL)$ satisfies Condition~\ref{con}
  and $M \leq Z_1 = \mcL_c$ is an isolated central $\Gri$-submodule
  admitting a complement in $\mcL_c$, then the induced algebra of
  endomorphisms $\overline{\mcE}\subseteq \End_{\Gri}(\mcL/M)$ also
  satisfies Condition~\ref{con}. As in Remark~\ref{ass}, the condition
  that $M$ is isolated and admits a complement may be dropped at the
  cost of disregarding finitely many (further) prime ideals $\mfp$
  of~$\Gri$.
\end{remark}

In contrast to Assumption~\ref{ass}, Condition~\ref{con} \emph{does}
delineate an interesting property. In Section~\ref{sec:nil} we
discuss, along with several examples and applications of
Theorem~\ref{thm:main} in the context of ideal zeta functions of
nilpotent Lie lattices, a number of known examples of such lattices
whose generic local ideal zeta functions do \emph{not} satisfy the
kind of functional equations established by
Theorem~\ref{thm:main}. \revone{We also comment in
  Section~\ref{sec:nil} on connections between Theorem~\ref{thm:main}
  and related work by du Sautoy and Woodward.}  To our knowledge, in
all cases of ideal zeta functions of nilpotent Lie lattices which are
known to satisfy generic local functional equations,
Condition~\ref{con} is satisfied, supporting the speculation that it
may actually be necessary for such functional equations. \revone{An
  analogy with Igusa's local zeta function, however, suggests a
  caveat; cf.\ Section~\ref{subsec:ilzf}.

  Indeed, }Theorem~\ref{thm:main} may be viewed as an analogue of the
functional equation for the generic Igusa local zeta functions
associated to a {homogeneous} polynomial over
$\Gri$\revone{;~\cite{DenefMeuser/91}}. In the light of this analogy,
Condition~\ref{con} may be viewed as a ``homogeneity condition'' for
nilpotent algebras of endomorphisms. For a further discussion of the
connection with Igusa's local zeta function, necessary vs.\ sufficient
conditions for local functional equations for submodule zeta
functions, and potential interpretations of the left-hand side of
\eqref{equ:funeq.submodule}, see Section~\ref{sec:nec.vs.suff}.

The proof of Theorem~\ref{thm:main} is given in
Section~\ref{subsec:proof.main.thm}.

\subsection{Applications I: normal zeta functions of finitely generated nilpotent groups}\label{subsec:appI}

Results such as Theorem~\ref{thm:main} about ideal zeta functions of
nilpotent Lie lattices have corollaries pertaining to {normal}
subgroup zeta functions of finitely generated nilpotent groups,
enumerating the groups' {normal} subgroups of finite index. Indeed, by
the Mal'cev correspondence, for every finitely generated torsion-free
nilpotent group $G$ there exists a nilpotent Lie ring \revone{(viz.\
  $\Z$-Lie lattice)} $\mcL$ such that, for almost all primes~$p$, the
local ideal zeta function $\zeta^{\nl}_{\mcL(\Zp)}(s)$ coincides with
the local normal subgroup zeta function
$$\zeta^{\nl}_{G,p}(s) = \sum_{H \nl_p G} |G:H|^{-s}$$ of $G$ at~$p$,
enumerating normal subgroups of $G$ of $p$-power index in $G$; see
\cite[Theorem~4.1]{GSS/88}. Moreover, every nilpotent Lie ring arises
in this way.

Applications of Theorem~\ref{thm:main} to ideal zeta functions of
nilpotent Lie lattices are discussed in Section~\ref{sec:nil}. Via the
Mal'cev correspondence, all of them have analogues for normal subgroup
zeta functions of finitely generated nilpotent groups. We only spell
out the following corollary of Theorem~\ref{thm:free} on free
nilpotent Lie lattices. Let $c,d\in\N$ and $F_{c,d}$ be the free
class-$c$-nilpotent group on $d$ generators. For $i\in
\{0,1,\dots,c\}$, set
\begin{equation}\label{equ:mobius}N_i 
= \sum_{1 \leq j\leq c-i} \frac{1}{j}\sum_{k|j}\mu(k)d^{j/k},
\end{equation} where $\mu$
is the M\"obius function. This well-known ``Witt formula'' gives the
Hirsch lengths of the quotients of $F_{c,d}$ by the terms of the upper
(or, equivalently, lower) central series. The numbers $N_i$ are also equal to the
$\Z$-ranks of the quotients of the free class-$c$-nilpotent Lie ring on
$d$ generators $\mff_{c,d}$ by the terms of the upper central series;
cf.\ Section~\ref{subsec:free.Lie.ring}. Note that $\mff_{c,d}$ is the
nilpotent Lie ring associated to the nilpotent group $F_{c,d}$ by the Mal'cev
correspondence. Our Theorem~\ref{thm:free} on the generic local ideal
zeta functions of these Lie rings has the following consequence.

\begin{corollary}\label{cor:free}
For almost all primes $p$, the following functional equation holds:
\begin{equation}\label{equ:funeq.free}
  \left.\zeta^{\nl}_{F_{c,d},p}(s) \right|_{p \rarr p^{-1}} =
  (-1)^{N_0}p^{\binom{N_0}{2}-s\left(\sum_{i=0}^{c-1}N_i\right)}\zeta^{\nl}_{F_{c,d},p}(s).
\end{equation}
\end{corollary}

\revone{For $c>2$ we have, in general, no means of quantifying the
  respective finite sets of primes for which \eqref{equ:funeq.free}
  fails to hold. For $c\leq 2$, they are empty: cf.\ \eqref{equ:c=1}
  for $c=1$ and \cite[Theorem~3]{Voll/05a} for $c=2$.}

\subsection{Applications II: degree in $q^{-fs}$ and behaviour at
  $s=-\infty$} 

Let $(\mcL,\mcE)$ be \revone{as in Section~\ref{subsec:intro}} with
$\mcE$ nilpotent, not necessarily satisfying Condition~\ref{con}. The
following definition is analogous to the concept of \emph{uniform
  representability} of families of local zeta functions developed in
\cite[Section~2.3]{Rossmann/15a}; see also
\cite[\S~1.2.4]{duSWoodward/08}.

\begin{definition}\label{def:uni}
  We call the pair $(\mcL,\mcE)$ \emph{almost uniform} if there exists
  a rational function $W\in\Q(X,Y)$ such that, for almost all prime
  ideals $\mfp$ of $\Gri$ and all finite extensions $\Lri$ of
  $\Gri_{\mfp}$, with residue field cardinality $q^f$, say,
  $\zeta_{\mcE(\Lri)\car \mcL(\Lri)}(s) = W(q^f,q^{-fs})$. By abuse of
  notation we also call $\zeta_{\mcE\car \mcL}(s)$ \emph{almost
    uniform} in this case.
\end{definition}

Recall that, for general reasons, for all $\mfp$ and all $\Lri$, the
local submodule zeta function $\zeta_{\mcE(\Lri)\car\mcL(\Lri)}(s)$ is
a rational function in $q^{-fs}$. The \emph{degree} of a rational
function $W=P/Q\in\Q(Z)$ is $\deg_Z W = \deg_Z P - \deg_Z Q$.

\begin{conjecture}\label{con:deg}
  For almost all prime ideals $\mfp$ of $\Gri$ and all finite
  extensions $\Lri$ of $\Gri_{\mfp}$, with residue field cardinality $q^f$,
  say,
\begin{align}
  \deg_{q^{-fs}}\left( \zeta_{\mcE(\Lri)\car \mcL(\Lri)}(s) \right) &
  = - \sum_{i=0}^{c-1}N_i,\label{equ:deg.t}\\ \lim_{s\rarr - \infty}
  \left( q^{-fs\sum_{i=0}^{c-1}N_i}\zeta_{\mcE(\Lri)\car
    \mcL(\Lri)}(s) \right) &=
  (-1)^nq^{-f\binom{n}{2}}.\label{equ:deg.q}
\end{align}
If $(\mcL,\mcE)$ is almost uniform, say $\zeta_{\mcE(\Lri)\car
  \mcL(\Lri)}(s) = W(q^f,q^{-fs})$ for almost all $\mfp$ and all
$\Lri$ for some $W\in\Q(X,Y)$, then $\deg_X W = -\binom{n}{2}$.
\end{conjecture}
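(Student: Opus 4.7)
The plan is to treat the three assertions in order of increasing difficulty, beginning with the observation that \eqref{equ:deg.t} and \eqref{equ:deg.q} are near-equivalents: existence of a finite, nonzero limit in \eqref{equ:deg.q} forces $\zeta_{\mcE(\Lri)\car\mcL(\Lri)}(s)$ to behave asymptotically like a nonzero constant multiple of $q^{fs\sum_{i=0}^{c-1}N_i}$ as $s\to-\infty$, which determines the $q^{-fs}$-degree claimed in \eqref{equ:deg.t}. So the crux is to establish \eqref{equ:deg.q}.

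When $(\mcL,\mcE)$ satisfies Condition~\ref{con}, \eqref{equ:deg.q} is an immediate consequence of Theorem~\ref{thm:main}. Indeed $a_1(\mcE\car\mcL)=1$, so $\lim_{s\to+\infty}\zeta_{\mcE(\Lri)\car\mcL(\Lri)}(s)=1$; equivalently, writing $\zeta_{\mcE(\Lri)\car\mcL(\Lri)}(s)=W(q^f,q^{-fs})$ one has $W(X,0)=1$ as a rational identity, and hence also $W(X^{-1},0)=1$. In the functional equation \eqref{equ:funeq.submodule} let $s\to-\infty$: under $q\to q^{-1}$, the original argument $q^{-fs}$ of $W$ becomes $q^{fs}\to 0$, so the left-hand side tends to $1$, while the right-hand side equals $(-1)^n q^{f\binom{n}{2}}\cdot q^{-fs\sum_{i=0}^{c-1}N_i}\,\zeta_{\mcE(\Lri)\car\mcL(\Lri)}(s)$. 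Rearranging recovers \eqref{equ:deg.q}, and hence also \eqref{equ:deg.t}.

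Extending \eqref{equ:deg.t}--\eqref{equ:deg.q} to general nilpotent $\mcE$ (beyond Condition~\ref{con}) requires bypassing the functional equation. The plan is to work directly with a $\mfp$-adic integral representation of $\zeta_{\mcE(\Lri)\car\mcL(\Lri)}(s)$ in the style of \cite{Rossmann/15}. Stratifying the integration domain by ``flag type'' with respect to the filtration $\{0\}=Z_0\subset Z_1\subset\dots\subset Z_c=\mcL$, each stratum contributes a rational function in $q^{-fs}$ whose denominator is a product of factors $(1-q^{fa_j}(q^{-fs})^{b_j})$. The expected total $\sum_{i=0}^{c-1}N_i=\sum_{j=1}^c(c-j+1)\rk_\Gri\mcL_j$ suggests that each element of $\mcL_j$ contributes weight $c-j+1$: for $j=c$ (the centre) this is Example~\ref{exa:c=1}, while higher layers accumulate extra weight from interacting with the central layers via the nilpotent action. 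The principal obstacle is isolating, within the Rossmann decomposition, a distinguished ``fully generic'' stratum whose denominator attains this total weight and whose contribution strictly dominates all others as $s\to-\infty$.

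The third assertion $\deg_X W=-\binom{n}{2}$ in the almost uniform case is the most delicate. The functional equation alone does not pin down $\deg_X W$, since $W(X^{-1},Y^{-1})$ involves the minimum as well as the maximum $X$-power of $W$. One must instead track $X$-degrees through the Denef/Rossmann decomposition of Remark~\ref{rem:invert}, controlling both the polynomial point counts coming from $|\overline{V_i}(\Lri/\mfP)|$ and the rational functions $W_i$. The base case $c=1$ is \eqref{equ:c=1}, whose denominator contributes exactly $\sum_{i=0}^{n-1}i=\binom{n}{2}$ to $\deg_X$; the Heisenberg Lie ring confirms this match in the first non-trivial case. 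An induction on $c$ along the filtration by the $Z_i$ should produce the telescoping identity
\[ \deg_X W = -\sum_{i=1}^{c}\left(\binom{N_{i-1}}{2}-\binom{N_i}{2}\right) = -\binom{N_0}{2}+\binom{N_c}{2} = -\binom{n}{2}, \]
using $N_0=n$ and $N_c=0$. The main obstacle here is ruling out parasitic $X$-contributions from the geometric varieties $\overline{V_i}$ that would disturb the telescoping; this is presumably where additional structural hypotheses (perhaps something homogeneity-like, reminiscent of Condition~\ref{con}) would enter.
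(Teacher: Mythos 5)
The statement you were given is Conjecture~\ref{con:deg}, but the paper itself does not prove this conjecture in general: it proves only Corollary~\ref{cor}, namely that the conjecture holds under Condition~\ref{con}. Your third paragraph — stratifying a Rossmann-style $\mfp$-adic integral to ``bypass the functional equation'' for arbitrary nilpotent $\mcE$ — is therefore aiming at something the paper leaves open, and what you offer there is only a heuristic, not an argument.

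For the part the paper does establish, your treatment of \eqref{equ:deg.t}--\eqref{equ:deg.q} is essentially the paper's proof in a different guise. The paper expresses $\zeta_{\mcE(\Lri)\car\mcL(\Lri)}(s)=P_{\Lri}(q^{-fs})/Q_{\Lri}(q^{-fs})$ with coprime $P_{\Lri},Q_{\Lri}\in\Z[Z]$, $Q_{\Lri}=\prod_j(1-q^{fa_j}Z^{b_j})$, and reads the functional equation off at the level of coefficients: $\delta_1-\delta_2 = -\sum_{i=0}^{c-1}N_i$ and palindromic-with-shift relations $\alpha_i|_{q\to q^{-1}}=(-1)^{\sigma_1}q^{-fc_1}\alpha_{\delta_1-i}$, $\beta_i|_{q\to q^{-1}}=(-1)^{\sigma_2}q^{-fc_2}\beta_{\delta_2-i}$, with $c_2-c_1=\binom{n}{2}$ and $\sigma_1+\sigma_2\equiv n\bmod 2$. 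Combined with $\alpha_0=\beta_0=1$ this gives $\alpha_{\delta_1}/\beta_{\delta_2}=(-1)^nq^{-f\binom{n}{2}}$, which is \eqref{equ:deg.q}. Your ``let $s\to-\infty$ in the functional equation'' gives the same conclusion; for the coefficient version the paper takes, one should note that it makes sense even without almost uniformity, whereas your phrasing $W(X^{-1},0)=1$ silently presumes that $\zeta$ is of the form $W(q^f,q^{-fs})$ for a single $W$.

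The genuine gap is the $\deg_X W$ claim. You correctly observe that the functional equation alone sees both the maximal and minimal $X$-powers, and then you propose a speculative induction on $c$ whose ``main obstacle'' you cannot resolve. But no induction is needed, and the missing ingredient is one you already have in hand for \eqref{equ:deg.q}: the normalization $\alpha_0=\beta_0=1$. In the almost uniform case the $\alpha_i,\beta_i$ are \emph{polynomials} in $X=q^f$ (so have nonnegative $X$-order), and the palindromic relation $\alpha_i(X^{-1})=(-1)^{\sigma_1}X^{-c_1}\alpha_{\delta_1-i}(X)$ rearranges to $\deg_X\alpha_{\delta_1-i}=c_1-\operatorname{ord}_X\alpha_i\le c_1$, with equality at $i=0$ because $\operatorname{ord}_X\alpha_0=0$. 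Thus $\max_i\deg_X\alpha_i=c_1$ and, by the same token, $\max_i\deg_X\beta_i=c_2=\sum_j a_j$. Since these maxima are exactly $\deg_X P$ and $\deg_X Q$, one gets $\deg_X W=c_1-c_2=-\binom{n}{2}$ directly. So the last claim is not ``the most delicate''; it follows from the same two facts (functional equation, constant term $1$) that you already used, with no telescoping or analysis of the varieties $\overline{V_i}$.
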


Informally speaking, equation \eqref{equ:deg.q} pins down the quotient
of the leading coefficients of the polynomials in $q^{-fs}$ giving
numerator and denominator of the rational function
$\zeta_{\mcE(\Lri)\car \mcL(\Lri)}(s)\in\Q(q^{-fs})$. The functional
equation \eqref{equ:funeq.submodule} established in
Theorem~\ref{thm:main} allows us to confirm this conjecture in the
case that Condition~\ref{con} is satisfied. The following is proven in
Section~\ref{subsec:cor}.

\begin{corollary}\label{cor}
  Conjecture~\ref{con:deg} holds if $(\mcL,\mcE)$ satisfies
  Condition~\ref{con}.
\end{corollary}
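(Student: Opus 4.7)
The plan is to derive Conjecture~\ref{con:deg} by comparing asymptotic behaviour of both sides of the functional equation~\eqref{equ:funeq.submodule} at $Y := q^{-fs} = 0$ and $Y = \infty$. Fix $\mfp$ and $\Lri$, set $Z(Y) := \zeta_{\mcE(\Lri)\car\mcL(\Lri)}(s)$, and let $Z^*$ denote the rational function in $Y$ obtained by applying the $q\rarr q^{-1}$ operation of Remark~\ref{rem:invert} (implemented via Poincar\'e duality for the smooth $V_i$, available for all but finitely many~$\mfp$). Since the coefficient of $Y^0$ in the power series $Z(Y)$ is $a_1 = 1$, independent of the Frobenius data, one has $Z^*(0) = Z(0) = 1$. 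In these terms \eqref{equ:funeq.submodule} becomes
\[
Z^*(Y^{-1}) = (-1)^n q^{f\binom{n}{2}}\, Y^{\sum_{i=0}^{c-1} N_i}\, Z(Y).
\]

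Reading off $\deg_Y$ on both sides: the left hand side has $\deg_Y = -\mathrm{ord}_{Y=0} Z^* = 0$ (since $Z^*(0) = 1 \neq 0$), whereas the right hand side has $\deg_Y = \sum_{i=0}^{c-1} N_i + \deg_Y Z$. Equating gives $\deg_Y Z = -\sum_{i=0}^{c-1} N_i$, which is \eqref{equ:deg.t}. Next, evaluating the identity in the limit $Y\rarr\infty$: the left hand side tends to $Z^*(0) = 1$, while the right hand side equals $(-1)^n q^{f\binom{n}{2}}\cdot\lim_{Y\rarr\infty} Y^{\sum_{i=0}^{c-1} N_i} Z(Y)$. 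This forces the latter limit to be $(-1)^n q^{-f\binom{n}{2}}$, which is \eqref{equ:deg.q}.

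For the third assertion, assume $(\mcL, \mcE)$ is almost uniform with $W\in\Q(X,Y)$ satisfying $Z(Y) = W(q^f, Y)$, so the functional equation is the identity
\[
W(X^{-1}, Y^{-1}) = (-1)^n X^{\binom{n}{2}}\, Y^{\sum_{i=0}^{c-1} N_i}\, W(X, Y)
\]
in $\Q(X, Y)$. Substituting $X\mapsto X^{-1}$ and reading off $\deg_X$ gives $\deg_X W = -\binom{n}{2} - \mathrm{ord}_{X=0} W$, so it suffices to show $\mathrm{ord}_{X=0} W = 0$. By Remark~\ref{rem:invert}, the common denominator of $W$ has the form $\prod_{j=1}^r (1 - X^{a_j} Y^{b_j})$, whose value at $X=0$ is the nonzero element $\prod_{a_j = 0}(1 - Y^{b_j})$ of $\Q(Y)$, so $W$ has no pole at $X=0$; conversely, $W(0,0) = W(X,0)|_{X=0} = 1 \neq 0$ forces $W(0, Y)\not\equiv 0$, ruling out a zero at $X=0$. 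Hence $\mathrm{ord}_{X=0} W = 0$, and $\deg_X W = -\binom{n}{2}$.

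The only obstacle I anticipate is the bookkeeping for the $\ast$-operation in the non-uniform setting: one must verify that $Z^*$ is a well-defined rational function in $Y$ with $Z^*(0) = 1$. This should follow cleanly from Poincar\'e duality for the smooth reductions of the $V_i$ together with the fact that the $Y^0$-coefficient of $Z$ is the pure integer $1$, carrying no Frobenius dependence.
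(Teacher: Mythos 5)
Your proposal is correct and reaches the same conclusion, but packages the argument differently from the paper. Where you work directly with the rational function $Z(Y)=\zeta_{\mcE(\Lri)\car\mcL(\Lri)}(s)$, comparing degrees in $Y$ and limits $Y\to\infty$ on the two sides of the functional equation, the paper instead expresses $\zeta = P_{\Lri}(Z)/Q_{\Lri}(Z)$ with $Q_{\Lri}(Z)=\prod_j(1-q^{fa_j}Z^{b_j})$ and derives coefficient-by-coefficient palindromic symmetries on $P_{\Lri}$ and $Q_{\Lri}$ (with parameters $c_1,c_2,\sigma_1,\sigma_2$ satisfying $c_2-c_1=\binom{n}{2}$ and $\sigma_1+\sigma_2\equiv n\pmod 2$), together with $\deg P_{\Lri}-\deg Q_{\Lri}=-\sum N_i$; the corollary is then read off from the extremal coefficients. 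Your route is more economical for the task at hand; the paper's yields more structural information (the full palindromy of the numerator and denominator), which is of independent interest. Both arguments hinge on exactly the same inputs: the explicit product form of the common denominator (so that there is no pole at $Y=0$), the fact that $a_1=1$ so the constant term is $1$, and the established functional equation.

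The one point you flag as uncertain, namely that $Z^*(0)=1$, is indeed the only delicate spot, and it is resolved by the well-definedness result you implicitly need. Concretely: the $Y^0$-coefficient of a Denef-type expression is again a Denef-type expression, and it represents the constant function $1$ (it equals $a_1=1$ for all $\Lri$ over almost all $\mfp$); since the $q\rarr q^{-1}$ operation is independent of the chosen Denef-type representation (\cite[Section~5]{Rossmann/15a}), and one such representation of the constant $1$ is trivial (a point with $W\equiv 1$, fixed by $q\rarr q^{-1}$), the inverted $Y^0$-coefficient is again $1$. This is the same kind of appeal to \cite{Rossmann/15a} that the paper's own proof makes tacitly when it writes $\alpha_i|_{q\rarr q^{-1}}$ in the (possibly non-uniform) setting. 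With that justification supplied, your proof is complete, and the argument for the third assertion (via $\deg_X W = -\binom{n}{2}-\mathrm{ord}_{X=0}W$ and $\mathrm{ord}_{X=0}W=0$) is also correct and matches what one extracts from the paper's coefficient symmetries.
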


Further evidence for Conjecture~\ref{con:deg} is provided by the
numerous examples of ideal zeta functions of nilpotent Lie rings in
\cite{duSWoodward/08} which do not satisfy generic local functional
equations; cf.\ Section~\ref{sec:nil}.

Recall that if the degree $\deg_YW$ of a rational \emph{generating}
function $W= P/Q \in\Q(Y)$ is nonpositive, then $\deg_Y Q$ is the
length of a shortest linear recurrence relation satisfied by the
coefficients of $W$ when expanded as a power series in $Y$;
cf.\ \cite[Theorem~4.1.1]{Stanley/97}. Equation \eqref{equ:deg.t} thus
yields a lower bound on the length of such a linear recurrence
relation. Determining this shortest length precisely seems to be a
challenging problem.

\begin{remark}\label{rem:tobias.t=1}
  Our conjecture \eqref{equ:deg.q} on the behaviour of
  $\zeta_{\mcE(\Lri) \car \mcL(\Lri)}(s)$ at $s=-\infty$ may be
  compared with the conjectural behaviour at $s=0$. In
  \cite[Conjecture~IV ($\mfP$-adic form)]{Rossmann/15} Rossmann
  conjectures that, for all (!)~$\mfp$ and all $\Lri$,
$$\left.(1-q^{-fs})\zeta_{\mcE(\Lri) \car \mcL(\Lri)}(s)\right|_{s=0} = \frac{1}{(1-q^f)(1-q^{2f})\dots(1-q^{(n-1)f})};$$
cf.\ also \cite[Section~8.3]{Rossmann/15}.
\end{remark}

\subsection{Context and related work -- zeta functions of groups, rings, and algebras}
Our proof of Theorem~\ref{thm:main}, presented in
Section~\ref{subsec:proof.main.thm}, proceeds by adapting the
$\mfp$-adic machinery developed in \cite{Voll/10}. There, this
technique is applied to establish generic local functional equations
for a range of zeta functions of groups and rings. The most general of
these applications is to subring zeta functions of arbitrary rings of
finite additive rank, i.e.\ finitely generated abelian groups with
some bi-additive multiplicative structure; see
\cite[Theorem~A]{Voll/10}.  Via the Mal'cev correspondence, this
translates into results for the generic Euler factors of the subgroup
zeta functions of finitely generated nilpotent groups, i.e.\ Dirichlet
generating series enumerating all finite index subgroups of such a
group; see~\cite[Corollary~1.1]{Voll/10}.  In
\cite[Theorem~C]{Voll/10} we prove functional equations for generic
local ideal zeta functions of nilpotent Lie rings of class $2$ (or,
equivalently, generic local normal zeta functions of finitely
generated nilpotent groups of class $2$). Theorem~\ref{thm:main}
generalizes this result; cf.\ \revone{Remark~\ref{rem:con.sat}}.

\revone{Numerous examples of ideal zeta functions of nilpotent Lie
  rings have been computed by various people; see, for instance,
  \cite[Section~2]{duSWoodward/08} for a substantial list. For a large
  number of submodule zeta functions associated to nilpotent algebras
  of endomorphisms see \cite{Rossmann/16} and the database in the
  computer algebra package \cite{Rossmannzeta}.  The paper
  \cite{Rossmann/17} gives an explicit formula for the zeta function
  enumerating submodules invariant under a single nilpotent
  endomorphism; cf.\ Section~\ref{subsec:cyclic} for details.}

A variant of the subgroup zeta function of a finitely generated
nilpotent group $G$ is its \emph{pro-isomorphic zeta function}
$\zeta^{\wedge}_G(s)$, enumerating the finite index subgroups of $G$
whose profinite completions are isomorphic to the profinite completion
of~$G$. These zeta functions, too, enjoy an Euler product
$\zeta^{\wedge}_G(s) = \prod_{p \textrm{
    prime}}\zeta^{\wedge}_{G,p}(s)$ whose factors are rational
functions in~$p^{-s}$. There are numerous examples of groups whose
local pro-isomorphic zeta functions satisfy functional equations akin
to~\eqref{equ:funeq.submodule} (see \cite{duSLubotzky/96, Berman/11}
and \cite{BermanKlopschOnn/15, BermanKlopschOnn/15a}), but also an
example showing that this symmetry phenomenon is not universal for
pro-isomorphic zeta functions
(see~\cite{BermanKlopsch/15}). Formulating necessary and sufficient
criteria for generic local functional equations in this context
remains an interesting challenge.

In \cite{Woodward/08}, Woodward computed the ideal zeta functions of
the full upper triangular $n\times n$-matrices over~$\Z$, as well as a
number of combinatorially defined quotients of these algebras. He
gives sufficient criteria for local functional equations, as well as
some examples suggesting that these criteria might be necessary.

Local functional equations akin to \eqref{equ:funeq.submodule} are
also ubiquitous in the theory of representation zeta functions
associated to arithmetic (and related pro-$p$) groups; see, for
instance, \cite{AKOVI/13,StasinskiVoll/14}.

\subsection{Notation}\label{subsec:not}
We write $\N$ for the natural numbers $\{1,2,\dots\}$. Given a subset
$I\subseteq \N$, we write $I_0$ for $I\cup\{0\}$. Given $m,n\in\N_0$,
we set $[m]=\{1,2,\dots,m\}$ and $]m,n]=\{m+1,\dots,n\}$. We write
$\diag\left( \lambda_1^{(e_1)},\dots,\lambda_m^{(e_m)} \right)$ for
the diagonal matrix composed of the matrices $\lambda_1
\Id_{e_1},\dots,\lambda_m\Id_{e_m}$. Given matrices $A_1,\dots,A_n$
with the same number of rows, we denote by $(A_1| \dots | A_n)$ the
matrix obtained by juxtaposition.

Throughout, $p$ is a rational prime, $\Gri$ the ring of integers of a
number field, and $\mfp$ a nonzero prime ideal of $\Gri$. We write
$\lri$ to denote a \revone{compact discrete valuation ring of
  arbitrary characteristic, with maximal ideal $\mfp$, residue field
  $\lri / \mfp$ of cardinality $q$ and characteristic~$p$, and
  $\mfp$-adic valuation~$v$ \revone{(normalized so that every
    uniformizer of $\lri$ has valuation $1$)}. If $\lri$ is of
  characteristic zero, then it is of the form} $\Gri_\mfp$, the
completion of $\Gri$ at a non-zero prime ideal~$\mfp$. Note that these
rings are exactly the finite extensions of the $p$-adic
integers~$\Zp$. In this case we denote by $\Lri$ a finite extension of
$\lri$, with maximal ideal $\mfP$ and residue field cardinality
$|\Lri / \mfP|= q^f$. In other words, $f=f(\Lri,\lri)$ is the relative
degree of inertia. Given a matrix $A =(A_{i,j})\in \Mat_{m,n}(\lri)$
we write $v(A)= \min\{v(A_{i,j}) \mid i\in[m], j\in[n]\}$ for the
minimal valuation of its entries.

The ``Kronecker delta'' $\delta_P$ associated to a property $P$ is
equal to $1$ if $P$ holds and equal to $0$ otherwise.

\section{Proofs of Theorem~\ref{thm:main} and
  Corollary~\ref{cor}}\label{sec:proof}

\subsection{Proof of
  Theorem~\ref{thm:main}}\label{subsec:proof.main.thm}
\subsubsection{Overview of the proof}

Let $\mfp $ be a nonzero prime ideal of $\Gri$. We write $\lri =
\Gri_\mfp$ and $K_\mfp$ for the field of fractions of $\lri$.  We are
looking to establish the functional
equation~\eqref{equ:funeq.submodule} for almost all zeta functions
$\zeta_{\mcE(\lri)\car \mcL(\lri)}(s)$, enumerating \revone{the
  $\lri$-submodules $\Lambda$ of finite index in $\mcL(\lri) \cong
  \lri^n$ which are also $\mcE(\lri)$-submodules}, written $\Lambda
\leq_{\mcE(\lri)}\mcL(\lri)$. Clearly the latter property is really a
property of the integral members of the \emph{homothety class}
$[\Lambda] = \{x \Lambda\mid x\in K_\mfp^*\}$ of $\Lambda$
in~$K_\mfp^n$: either all elements of $[\Lambda]$ are
$\mcE(\lri)$-submodules, or none are. We write
$[\Lambda]\leq_{\mcE(\lri)}\mcL(\lri)$ in the former case and set
$$\SubMod_{\mcE(\lri)} = \{[\Lambda] \mid \Lambda \textup{
  $\lri$-lattice in $K_{\mfp}^n$},
\;[\Lambda]\leq_{\mcE(\lri)}\mcL(\lri)\}.$$ Evidently, every homothety
class $[\Lambda]$ of $\lri$-lattices in $K_{\mfp}^n$ contains a unique
element $\Lambda_{\max}$ which is contained in $\mcL(\lri)$ and
maximal with respect to this property. As the intersection of
$[\Lambda]$ with the set of all full sublattices of $\mcL(\lri)$
equals $\{ \mfp^m \Lambda_{\max} \mid m\in\N_0\}$ it thus suffices --
in principle -- to describe the elements of $\SubMod_{\mcE(\lri)}$ and
to keep track of their maximal integral members' indices
in~$\mcL(\lri)$. Indeed,

\begin{equation}\label{equ:building}
  \zeta_{\mcE(\lri)\car \mcL(\lri)}(s) = \frac{1}{1-q^{-ns}}
  \sum_{[\Lambda] \in \SubMod_{\mcE(\lri)}}
  |\mcL(\lri):\Lambda_{\max}|^{-s}.
\end{equation}

Geometrically, one may view the set of homothety classes of
$\lri$-lattices in $K_\mfp$ as the set of vertices $\mcV_n$ of the
affine Bruhat-Tits building $\Delta(\SL_n(K_\mfp))$; see, for
instance, \cite[Chapter~19]{Garrett/97}. Keeping track of the indices
$|\mcL(\lri):\Lambda_{\max}|$ is easy (cf.~\eqref{equ:index}), so the
problem of computing the right-hand side of \eqref{equ:building} is to
identify $\SubMod_{\mcE(\lri)}$ as a subset of $\mcV_n$.  We give an
informal overview of our way to address this challenge, deferring
details for the time being.

Firstly, we define an equivalence relation $\sim$ on $\mcV_n$ (viz.\
$\delta$-\emph{equivalence}; cf.\ Definition~\ref{def:equiv}) with the
property that -- provided $\mcE\neq \{0\}$, as we may assume without
loss of generality -- each $\sim$-class $\mcC$ is a totally ordered
set naturally isomorphic to $(\Z,\leq)$. Moreover, the sets
$\mcC_{\geq0} := \mcC \cap \SubMod_{\mcE(\lri)}$ correspond, under
these isomorphisms, to $(\N_0,\leq)$.  Setting
\begin{equation}\label{equ:Z.nonneg}
  \Xi_{\mcC_{\geq0}}(s) =  \sum_{[\Lambda]\in\mcC_{\geq0}} |\mcL(\lri):\Lambda_{\max}|^{-s}
\end{equation}
we thus obtain
\begin{equation}\label{equ:zeta.nonneg}
 \zeta_{\mcE(\lri)\car \mcL(\lri)}(s) = \frac{1}{1-q^{-ns}}
\sum_{\mcC \in \mcV_n/\sim} \Xi_{\mcC_{\geq0}}(s).
\end{equation}
Rather than to analyze the functions $\Xi_{\mcC_{\geq0}}(s)$ directly,
we extend the sums~\eqref{equ:Z.nonneg} defining them to the whole
of~$\mcC$ in a way that allows us to recover the original sum
algebraically, uniformly over all $\mcC$. The motivation to consider
these extensions in the first place is that they give rise to a
Dirichlet generating series $A^{\nl}(s)$ satisfying a functional
equation of the desired type; cf.\ \eqref{equ:funeqA}.

To this end we introduce, secondly, judiciously chosen functions
$\wtm_1,m_2:\mcV_n\rarr \N_0$ with the properties
\begin{align*}
[\Lambda]\in \SubMod_{\mcE(\lri)} \Leftrightarrow \wtm_1([\Lambda])=0, \quad
[\Lambda]\in \SubMod_{\mcE(\lri)} \Rightarrow m_2([\Lambda])=0.
\end{align*}
The function $\wtm_1$ may be thought of as measuring a kind of
``distance'' to~$\SubMod_{\mcE(\lri)}$.  We set, for each $\mcC \in
\mcV_n/\sim$,
 $$\Xi_{\mcC}(s) = \sum_{[\Lambda]\in\mcC}
 |\mcL(\lri):\Lambda_{\max}|^{-s} q^{-s\left((c-1) \wtm_1([\Lambda]) -
     m_2([\Lambda])\right)},$$ naturally extending the sum
 \eqref{equ:Z.nonneg} defining $\Xi_{\mcC\geq 0}(s)$. Our choices of
 $\wt{m}_1$ and $m_2$ will ensure that -- up to a power of $q^{-s}$
 depending on $\mcC$ -- the function $\Xi_{\mcC}(s)$ is the sum of two
 geometric progressions, covering respectively the ``nonnegative''
 part $\mcC_{\geq0}$ and the ``negative'' part $\mcC_{<0} := \mcC
 \setminus \mcC_{\geq0}$ of $\mcC$, which only depend on the data
 $(N_i)_{i=0}^{c-1}$ but, crucially, not on~$\mcC$; cf.\
 Corollary~\ref{cor:pos} and Lemma~\ref{lem:m}. This entails that,
 with $$\revtwo{\Xi(s) :=
 \frac{1-q^{-s\sum_{i=1}^{c-1}(n-N_i)}}{1-q^{-s(c-1)n}}},$$ we have
 $\Xi_{\mcC_{\geq0}}(s) = \Xi_{\mcC}(s) \Xi(s)$ for all $\sim$-classes
 $\mcC$.  As indicated above, the point of these constructions is that
 the functions $\wtm_1$ and $m_2$ are defined in such a way that -- at
 least for almost all $\mfp$ -- the function
$$A^{\nl}(s) := \sum_{\mcC \in \mcV_n/\sim} \Xi_{\mcC}(s) =
\sum_{[\Lambda]\in \mcV_n} |\mcL(\lri):\Lambda_{\max}|^{-s}
q^{-s\left((c-1) \wtm_1([\Lambda]) - m_2([\Lambda])\right)}$$ may be
expressed in terms of $\mfp$-adic integrals which fit the templates
provided by~\cite{Voll/10}. In particular, it satisfies, for almost
all $\mfp$, the functional equation~\eqref{equ:funeqA}. Moreover,
\begin{multline*}
  \zeta_{\mcE(\lri)\car \mcL(\lri)}(s) = \frac{1}{1-q^{-ns}}\sum_{\mcC
    \in\mcV_n/\sim}\Xi_{\mcC_{\geq 0}}(s) = \\ \frac{1}{1-q^{-ns}}
  \Xi(s) \sum_{\mcC \in \mcV_n/\sim} \Xi_{\mcC}(s) =
  \frac{1}{1-q^{-ns}} \Xi(s) A^{\nl}(s).
\end{multline*}
The factor $\frac{1}{1-q^{-ns}} \Xi(s)$ trivially satisfies a
functional equation of the required type;
see \eqref{equ:funeqtrivial}. Together with \eqref{equ:funeqA}, this
will prove Theorem~\ref{thm:main}.

\subsubsection{Cocentral bases}
For $i\in[c]_0$ we write $n_i = \rk_\Gri(\mcL_i)$, so that $N_i =
\rk_\Gri(\mcL/Z_i)= \sum_{j\leq c-i}n_j$. An $\Gri$-basis $\bfe =
(e_1,\dots,e_n)$ of $\mcL$ is called \emph{cocentral} if $$Z_i = \la
e_{N_i+1},\dots,e_n\ra_\Gri$$ for all $i\in[c]$. (This terminology
extends the one introduced in \cite[Definition~4.37]{duSWoodward/08}.)
By Assumption~\ref{ass}, cocentral bases clearly
exist. Condition~\ref{con} is equivalent to the following condition.

\begin{condition}\label{con2}
  There exist generators $c_1,\dots,c_d$ of $\mcE$ and a cocentral
  $\Gri$-basis $\bfe$ of $\mcL$ such that, for all $k\in[d]$, the
  matrix $C_k$ representing $c_k$ with respect to $\bfe$ (acting from
  the right on row vectors) has the form $$C_k = \left(C_{k}^{(ij)}
  \right)_{i,j\in[c]}\in\Mat_n(\Gri)$$ for blocks
  $C_{k}^{(ij)}\in\Mat_{n_i,n_j}(\Gri)$ which are zero unless $j=i+1$.
\end{condition}

\subsubsection{Lattices, matrices, and the submodule condition}

Let $\bfe$ be a cocentral $\Gri$-basis of $\mcL$ as in
Condition~\ref{con2}. It yields an $\lri$-basis of $\mcL(\lri)$ which
we also denote by $\bfe$ and which allows us to identify the
$\lri$-module $\mcL(\lri)$ with $\lri^n$ and $\mcE(\lri)$ with a
nilpotent subalgebra of $\Mat_n(\lri)$, generated by matrices
$C_1,\dots,C_d$ representing the $\lri$-linear operators
$c_1,\dots,c_d$.  Set $\Gamma = \GL_n(\lri)$. A full $\lri$-sublattice
$\Lambda$ of $\mcL(\lri)$ may be identified with a coset $\Gamma M$
for a matrix $M\in\GL_n(K_{\mfp})\cap \Mat_n(\lri)$, whose rows encode
the coordinates with respect to~$\bfe$ of a set of generators of
$\Lambda$. Let $\pi\in\mfp$ be a uniformizer of $\lri$. By the
elementary divisor theorem there exist
$I=\{i_1,\dots,i_l\}_<\subseteq[n-1]$, $r_n\in\N_0$, and
$\bfr_{\revone{I}}=(r_{i_1},\dots,r_{i_l})\in\N^l$, all uniquely determined by
$\Lambda$, and $\alpha\in \Gamma$ such that $M=D\alpha^{-1}$, where
\begin{equation}\label{equ:diagonal}
  D = \pi^{r_n} \diag((\pi^{\sum_{\iota\in I}r_\iota})^{(i_1)},(\pi^{\sum_{\iota\in I \setminus\{i_1\}}r_\iota})^{(i_2-i_1)},\dots,(\pi^{r_{i_l}})^{(i_l-i_{l-1})},1^{(n-i_l)})\in\Mat_n(\lri).
\end{equation}
We write $\nu([\Lambda])=(I,\bfr_I)$. Note that $r_n = v(M)$ in the
notation introduced in~Section~\ref{subsec:not}.

The matrix $\alpha\in\Gamma$ is only unique up to right multiplication
by an element of a finite index subgroup $\Gamma_{I,\bfr}$ of
$\Gamma$; see \cite[Section~3.1]{Voll/10} for details. Obviously,
\begin{equation}\label{equ:index}
  |\mcL(\lri):\Lambda| = q^{v(\det D)} = q^{\sum_{\iota\in
      I\cup\{n\}}\iota r_\iota}.
\end{equation}
We call $\Lambda$ \emph{maximal} if $r_n=0$ and denote by
$\Lambda_{\max}$ the unique maximal integral element of $[\Lambda]$.
In the sequel we will often -- and sometimes without explicit
mentioning -- toggle between lattices $\Lambda$ and representing
matrices $M$, extending notation for lattices to matrices representing
them. We write, for instance, $[M]$ for the homothety class
$[\Lambda]$ of the lattice $\Lambda$ determined by $\Gamma M$ and $M
\leq_{\mcE(\lri)} \mcL(\lri)$ if $\Lambda
\leq_{\mcE(\lri)}\mcL(\lri)$.  We also write, given matrices $A$ and
$B$ over $\lri$ with the same number of columns, $A \leq B$ if each
row of $A$ is contained in the $\lri$-row span of $B$.

Define the diagonal matrix
\begin{equation}\label{def:delta}
\delta :=
\diag((\pi^{c-1})^{(n_1)},\dots,(\pi)^{(n_{c-1})},1^{(n_c)})\in\Mat_n(\lri).
\end{equation}
We remark that, up to a scalar factor, $\delta$ represents a map
closely related to the map $\pi_{\mathcal{B}}$ in
\cite[Definition~4.40]{duSWoodward/08}. Note that $\det \delta =
\pi^{\sum_{i=1}^{c-1}N_i}$. The following is a trivial consequence of
Condition~\ref{con2}; for our purposes it is key nevertheless.

\begin{lemma}\label{lem:3}
 If $c>1$, then $\delta C_k \delta^{-1} = \pi C_k$ for all $k\in[d]$.
\end{lemma}

For $i\in[n]$ and $k\in[d]$, write
$(e_i)\revone{c_k}=\sum_{l=1}^n\lambda_{i,k}^l e_l$ for
$\lambda_{i,k}^l\in\Gri$. Then $C_k$ satisfies $\left(
C_k\right)_{r,s}=\lambda^s_{r,k}$ for $r,s\in[n]$.  Let
$\bfY=(Y_1,\dots,Y_n)$ be independent variables and set
$$\mcR(\bfY) = \left( \sum_{l=1}^n \lambda_{i,k}^l
  Y_l\right)_{i,k}\in\Mat_{n\times d}(\Gri[\bfY]).$$ Note that $\mcR=0
\Leftrightarrow \mcE=0 \Leftrightarrow c=1$. In this case,
Theorem~\ref{thm:main} holds (cf.\ Example~\ref{exa:c=1}), so we may
assume that $c>1$. We write $\alpha[i]$ for the $i$-th column of a
matrix~$\alpha\in\Gamma$, so that $\mcR(\alpha[i])\in\Mat_{n\times
  d}(\lri)$.  The following lemma is verified by a trivial
computation.

\begin{lemma}\label{lem:2}
  For all $\alpha\in\Gamma$ and $\Delta\in\Mat_n(\lri)$, and $D$ as in
  \eqref{equ:diagonal},
$$\left( \forall k\in[d]:\; \Delta C_k\alpha\leq D\right) \Leftrightarrow \left( \forall
i\in [n]:\; \Delta \mcR(\alpha[i])\equiv 0 \bmod D_{i,i}\right).$$
\end{lemma}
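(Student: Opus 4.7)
The plan is to reduce both sides of the equivalence to the same elementary divisibility conditions on the entries of the matrix product $\Delta C_k \alpha$. First, since $D$ is diagonal, I would note that a matrix $A\in\Mat_n(\lri)$ satisfies $A\leq D$ if and only if $D_{ss}\mid A_{rs}$ for all $r,s\in[n]$: the $\lri$-row span of a diagonal matrix is precisely the set of row vectors whose $s$-th coordinate is divisible by $D_{ss}$. Applying this to $A=\Delta C_k\alpha$ for each $k$, the left-hand side of the equivalence becomes
\[
\forall k\in[d],\;\forall r,s\in[n]:\quad D_{ss}\mid (\Delta C_k\alpha)_{rs}.
\]

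Next, I would unpack the right-hand side by computing the entries of $\Delta\mcR(\alpha[i])$. Using $(C_k)_{rs}=\lambda^s_{rk}$ and the definition of $\mcR$, the $(l,k)$-entry of $\mcR(\alpha[i])$ equals $\sum_{j=1}^n\lambda^j_{lk}\alpha_{ji}$, which is exactly the $l$-th entry of the column vector $C_k\alpha[i]$; equivalently, the $k$-th column of $\mcR(\alpha[i])$ is $C_k\alpha[i]$. Left-multiplying by $\Delta$, the $(r,k)$-entry of $\Delta\mcR(\alpha[i])$ equals $(\Delta C_k\alpha)_{ri}$, the $r$-th entry of the $i$-th column of $\Delta C_k\alpha$. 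The right-hand condition that $D_{ii}$ divides every entry of $\Delta\mcR(\alpha[i])$, for every $i\in[n]$, therefore becomes
\[
\forall i\in[n],\;\forall r\in[n],\;\forall k\in[d]:\quad D_{ii}\mid (\Delta C_k\alpha)_{ri},
\]
which is identical to the reformulation of the left-hand side (after relabelling $i\leftrightarrow s$ and $r\leftrightarrow r$). This yields the claimed equivalence.

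There is essentially no obstacle: the only subtlety is keeping the bookkeeping straight, since $\mcR(\bfY)$ has shape $n\times d$ and is indexed by a ``module'' index and an ``algebra-generator'' index, while the $C_k$ are $n\times n$ matrices acting on row vectors. Once the identification $\mcR(\alpha[i])[\text{column }k]=C_k\alpha[i]$ is observed, both sides reduce to the same quantifier-free divisibility statement.
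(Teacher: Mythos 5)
Your proof is correct, and it is exactly the "trivial computation" the paper alludes to without spelling out: for diagonal $D$, the condition $A\leq D$ reduces to $D_{ss}\mid A_{rs}$ for all $r,s$, and the identity $\mcR(\alpha[i])[\text{column } k]=C_k\alpha[i]$ identifies the two families of divisibility conditions. Nothing to add.
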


\begin{proposition}\label{prop:1}
Given $M\in\GL_n(K_{\mfp})\cap \Mat_n(\lri)$, there exists a unique
$\wtm_1=\wtm_1(M)\in\N_0$ such that, for all $m\in\N_0$,
$$M\delta^m\leq_{\mcE(\lri)} \mcL(\lri)\textup{ if and only }m\geq
\wtm_1.$$ In particular, $M \leq_{\mcE(\lri)} \mcL(\lri)$ if and only
if $\wtm_1=0$. Moreover, $\wtm_1 \leq \sum_{\iota\in I}r_{\iota}$.
\end{proposition}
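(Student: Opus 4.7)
I would approach this by rephrasing ``$M\delta^m \leq_{\mcE(\lri)} \mcL(\lri)$'' as an explicit divisibility requirement on $m$, via Lemmas~\ref{lem:3} and~\ref{lem:2}, and then reading off $\wtm_1$. First, $\mcE$-invariance of the row $\lri$-span of $M\delta^m$ is the condition $M\delta^m C_k \leq M\delta^m$ for every generator~$C_k$. Applying Lemma~\ref{lem:3} iteratively (so that $\delta^m C_k = \pi^m C_k \delta^m$), this becomes $\pi^m M C_k \delta^m \leq M\delta^m$. Because $\delta^m$ is invertible over $K_\mfp$, one may cancel: an equation $\pi^m M C_k \delta^m = X M \delta^m$ with $X \in \Mat_n(\lri)$ is equivalent to $\pi^m M C_k = X M$. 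Hence the submodule condition reduces to $\pi^m M C_k \leq M$ for all $k \in [d]$. Substituting $M = D\alpha^{-1}$ and right-multiplying by $\alpha$ gives $\pi^m D\alpha^{-1} C_k \alpha \leq D$, which is precisely the format handled by Lemma~\ref{lem:2} with $\Delta = \pi^m D\alpha^{-1}$. That lemma converts the condition into the coordinate-wise congruence $\pi^m D\alpha^{-1} \mcR(\alpha[i]) \equiv 0 \bmod D_{ii}$ for each $i \in [n]$.

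Entry by entry, the congruence reads $m + v\bigl((D\alpha^{-1}\mcR(\alpha[i]))_{rs}\bigr) \geq v(D_{ii})$. Setting
\[
\wtm_1 := \max\!\left(0,\; \max_{i \in [n]} \bigl( v(D_{ii}) - v(D\alpha^{-1}\mcR(\alpha[i])) \bigr)\right) \in \N_0,
\]
the condition holds precisely when $m \geq \wtm_1$; this gives existence, uniqueness, and (taking $m = 0$) the equivalence $M \leq_{\mcE(\lri)} \mcL(\lri) \Leftrightarrow \wtm_1 = 0$. The bound $\wtm_1 \leq \sum_{\iota \in I} r_\iota$ follows by combining two elementary estimates extracted from the explicit form~\eqref{equ:diagonal}: $\pi^{r_n}$ divides every entry of $D$ (so $v(D\alpha^{-1}\mcR(\alpha[i])) \geq r_n$, since $\alpha^{-1}$ and $\mcR(\alpha[i])$ have entries in $\lri$), while $v(D_{ii}) \leq r_n + \sum_{\iota \in I} r_\iota$ uniformly in~$i$. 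Subtracting yields the bound.

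The main subtlety is the reduction of the lattice inclusion $M\delta^m C_k \leq M\delta^m$ to a matrix congruence for $\pi^m M C_k$ modulo the rows of~$M$; this passage hinges crucially on Lemma~\ref{lem:3}, which commutes $\delta^m$ past $C_k$ at the controlled cost of the factor $\pi^m$ that ultimately drives the whole monotonicity. Once this is in place, Lemma~\ref{lem:2} does the rest essentially automatically, and the block-diagonal structure of $D$ makes the bound immediate.
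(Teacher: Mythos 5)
Your proof is correct and follows essentially the same route as the paper: rewrite $M\delta^m C_k \leq M\delta^m$ using Lemma~\ref{lem:3} to slide $\delta^m$ past $C_k$ at the cost of a factor $\pi^m$, cancel $\delta^m$, substitute $M = D\alpha^{-1}$, apply Lemma~\ref{lem:2}, and read off $\wtm_1$ from valuations. The only cosmetic differences are that the paper first normalizes $r_n=0$ and replaces $\alpha^{-1}$ by the adjugate $\alpha^{\textup{adj}}$ (a unit-multiple, useful for the later $\mfp$-adic integral formulation), whereas you keep $\alpha^{-1}$ and absorb $r_n$ directly into the bound.
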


\bp Write $M = D \alpha^{-1}$ as above. Without loss we may assume
$r_n=0$. Using Lemmas~\ref{lem:3} and \ref{lem:2} and the fact that
$D_{i,i}=\pi^{\sum_{i\leq \iota\in I}r_\iota}$ for $i\in[n]$, we obtain
\begin{align*} M\delta^m \leq_{\mcE(\lri)} \mcL(\lri)\\
  \Leftrightarrow \;& \forall k\in [d]: M \delta^m C_k \leq M \delta^m\\
  \Leftrightarrow \; & \forall k\in [d]: D \alpha^{-1}\delta^m C_k \delta^{-m}\alpha \leq D \\
 \Leftrightarrow \; & \forall k\in [d] : \pi^m D \alpha^{-1}C_k \alpha \leq D\\
\Leftrightarrow \; & \forall i\in[n]: \pi^mD\alpha^{-1}\mcR(\alpha[i]) \equiv 0 \bmod D_{i,i}\\
\Leftrightarrow \; & \forall i\in[n]: \pi^m D \alpha^{-1}\mcR(\alpha[i])\pi^{\sum_{i > \iota \in I} r_\iota} \equiv 0 \bmod \pi^{\sum_{\iota\in I} r_\iota}\\
\Leftrightarrow \; & \pi^m D \alpha^{-1} \left( \mcR(\alpha[1]) \mid \dots \mid \mcR(\alpha[n])\right)\cdot \\
& {\diag(1^{(di_1)},(\pi^{r_{i_1}})^{(d(i_2-i_1))},\dots,(\pi^{\sum_{\iota\in I}r_\iota})^{(d(n-i_l))})\equiv 0 \bmod \pi^{\sum_{\iota\in I}r_\iota}.}
\end{align*}
In the last congruence, we may replace $\alpha^{-1}$ by the adjunct
matrix $\alpha^{\textup{adj}}$.  Setting, for $i,r\in[n]$,
\begin{align*}
\mcR_{(i)}(\alpha) &= \alpha^{\textup{adj}}\mcR(\alpha[i]),\\
v^{(1)}_{i,r}(\alpha) &= \min \left\{
v(\mcR_{(\iota)}(\alpha)_{\rho\sigma}) \mid \iota \leq i, \rho \geq r,
\sigma\in[d]\right\},
\end{align*}
and
\begin{equation*}
  m_1(M) = \min \left\{ \sum_{\iota\in I}r_\iota, \sum_{r \leq \iota
    \in I} r_\iota + \sum_{i > \iota \in I}r_{\iota} +
  v^{(1)}_{i,r}(\alpha) \bigm|  (i,r)\in[n]^2\right\},
\end{equation*}
we may rephrase the above equivalence as follows:
\begin{equation}\label{equ:m1.tilde}
  M\delta^m \leq_{\mcE(\lri)} \mcL(\lri) \Leftrightarrow  m \geq \sum_{\iota\in I}r_{\iota} - m_1(M) =: \wtm_1(M).\qedhere
\end{equation}
  \ep

\begin{definition}\label{def:dist}
  For a lattice $\Lambda$ corresponding to a coset $\Gamma M$, we set
  $\wtm_1([\Lambda]) = \wtm_1(M)$.
\end{definition}

Informally, $\wtm_1([\Lambda])$ is a `distance' in $\mcV_n$ between
$[\Lambda]$ and $\SubMod_{\mcE(\lri)}$.

\subsubsection{$\delta$-equivalence}
Recall the diagonal matrix $\delta$ defined in~\eqref{def:delta}.

\begin{definition}\label{def:equiv}
  Lattice classes $[\Lambda_1],[\Lambda_2]\in \mcV_n$ are called
  $\delta$-\emph{equivalent}, written $[\Lambda_1]\sim[\Lambda_2]$, if
  there exists $m\in\Z$ such that $[\Lambda_1] = [\Lambda_2
    \delta^m]$.
\end{definition}

In the sequel, we use the terms \emph{lattice class} for a homothety
class of lattices and \emph{$\delta$-class} for a $\sim$-equivalence
class of lattice classes.  Proposition~\ref{prop:1} asserts that every
$\delta$-class of lattice classes intersects $\SubMod_{\mcE(\lri)}$
nontrivially. Its proof also shows that, more precisely, in each
$\delta$-class $\mcC$ there is a unique lattice class $[\Lambda_0]$
such that $[\Lambda_0\delta^m] \leq_{\mcE(\lri)} \mcL(\lri)$ if and
only if $m\in\N_0$. We shall say that $[\Lambda_0]$ \emph{generates}
$\mcC_{\geq 0}$ and write $\Lambda_{0,\max}$ for the unique maximal
element of~$[\Lambda_0]$. Setting
\begin{align*}
  \mcC_{\geq0} & = \{[\Lambda_0 \delta^m] \mid m\geq 0\} = \mcC \cap
  \SubMod_{\mcE(\lri)}, \\
  \mcC_{<0} & = \{ [\Lambda_0 \delta^m] \mid m < 0\}= \mcC \setminus
  \mcC_{\geq0},
\end{align*} we obtain a partition $\mcC = \mcC_{\geq 0} \cup
\mcC_{<0}$ and \eqref{equ:zeta.nonneg} holds with
$\Xi_{\mcC_{\geq0}}(s)$ defined as in \eqref{equ:Z.nonneg}.

Note that clearly $v(M) \leq v(M\delta)$ for all
$M\in\GL_n(K_{\mfp})\cap \Mat_n(\lri)$.

\begin{lemma}\label{lem:delta}
  For almost all prime ideals $\mfp$, the following holds for all
  $M\in\GL_n(K_\mfp) \cap \Mat_n(\Gri_\mfp)$: if $M
  \leq_{\mcE(\Gri_{\mfp})}\mcL(\Gri_{\mfp})$, then $v(M) =
  v(M\delta)$.
\end{lemma}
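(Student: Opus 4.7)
\emph{Plan of proof.} I would first reduce to showing that $v(M\delta)\leq v(M)$, since the reverse inequality is immediate from $\delta\in\Mat_n(\lri)$. Replacing $M$ by $\pi^{-v(M)}M$ -- an operation preserving both the integrality of the matrix and the $\mcE(\lri)$-stability of its row span -- reduces matters to the case $v(M)=0$, where one must show $v(M\delta)=0$. Unpacking definition \eqref{def:delta} of $\delta$, the failure $v(M\delta)>0$ means that every entry of $M$ in the last $n_c$ columns (those corresponding to the cocentral block $\mcL_c=Z_1$) lies in $\pi\lri$, while $v(M)=0$ forces some entry outside these columns to be a unit. Equivalently, the reduction $\overline{M}\in\Mat_n(\Fq)$ is nonzero with vanishing $\mcL_c$-columns, so its row span $W$ is a nonzero $\overline{\mcE}$-submodule of $\Fq^n$ contained in $\bigoplus_{j<c}\mcL_j\otimes\Fq$. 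The task is to rule this out for $\mfp$ outside a finite set depending only on $(\mcL,\mcE)$.

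Next, I would pick any $v\in W\setminus\{0\}$, decompose $v=v_{j^*}+\dots+v_{c-1}$ with $v_{j^*}\neq 0$ and $j^*\in\{1,\dots,c-1\}$, and exploit $\overline{\mcE}$-stability of $W$. For any generators $\overline{C}_{k_1},\dots,\overline{C}_{k_{c-j^*}}$ the product $v\,\overline{C}_{k_1}\cdots\overline{C}_{k_{c-j^*}}$ still lies in $W$. By Condition~\ref{con2}, the contribution of the summand $v_j$ to this product lies in $\mcL_{j+c-j^*}$; for $j>j^*$ this index exceeds $c$ and the term vanishes, so only the $j^*$-summand survives, producing $v_{j^*}\,\overline{C}_{k_1}^{(j^*,j^*+1)}\cdots\overline{C}_{k_{c-j^*}}^{(c-1,c)}\in\mcL_c\otimes\Fq$. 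Since $W$ has trivial $\mcL_c$-component, this product must vanish for every choice of the $k_i$, placing $v_{j^*}$ in the kernel of the $\Fq$-linear map $\Phi_{j^*}\colon\mcL_{j^*}\otimes\Fq\rarr(\mcL_c\otimes\Fq)^{d^{c-j^*}}$ assembled from all length-$(c-j^*)$ products of the generators.

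Finally, I would show that $\Phi_{j^*}$ is generically injective. The analogous map $\Phi_{j^*}^{\Gri}$ defined over $\Gri$ has kernel $\mcL_{j^*}\cap\{x\in\mcL\mid\mcE^{c-j^*}x=0\}$. A short induction starting from $Z_1=\{x\in\mcL:\mcE x=0\}$ identifies $Z_i$ with $\{x\in\mcL:\mcE^ix=0\}$, so this kernel equals $\mcL_{j^*}\cap Z_{c-j^*}=\mcL_{j^*}\cap\bigoplus_{j>j^*}\mcL_j=0$ by Assumption~\ref{ass}. Hence $\Phi_{j^*}^{\Gri}$ is injective, so some maximal minor $\mu_{j^*}$ of a matrix representing it is a nonzero element of $\Gri$. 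Excluding the finite set of $\mfp$ dividing $\prod_{j^*=1}^{c-1}\mu_{j^*}$ then ensures that each reduction $\overline{\Phi_{j^*}}$ remains injective modulo $\mfp$, forcing $v_{j^*}=0$ and completing the contradiction. The main subtlety I expect is precisely that the exceptional set of primes must be chosen uniformly in $M$; this causes no trouble because the obstructions $\mu_{j^*}$ depend only on the generators $c_1,\dots,c_d$ and the cocentral decomposition, not on $M$.
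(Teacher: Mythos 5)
Your proof is correct, and it establishes the lemma by a route that is recognizably the same in spirit as the paper's but organized quite differently. Both arguments boil down to the same geometric fact: if $v(M\delta)>v(M)$ then, after normalizing, the reduction $\overline{M}$ spans a nonzero $\overline{\mcE}$-stable subspace $W$ of $\mcL\otimes\Fq$ with $W\cap(\mcL_c\otimes\Fq)=0$, and for almost all $\mfp$ this is impossible because $\mcL_c\otimes\Fq$ coincides with the centralizer of $\overline{\mcE}$ acting on $\mcL\otimes\Fq$, and nilpotency forces any nonzero submodule to meet that centralizer.

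The paper realizes this by an induction on the class $c$: the induction hypothesis, applied to $\Lambda\cap Z_{c-1}(\lri)$, forces all rows of $\overline{M}$ except the first $n_1$ to vanish, after which a single invocation of the generic identification of $Z_1(\lri)/\mfp$ with the centralizer of $\mcL(\lri/\mfp)$ produces the contradiction. You avoid the induction entirely. Instead you take a nonzero $v\in W$, isolate its lowest nonzero graded component $v_{j^*}\in\mcL_{j^*}\otimes\Fq$, and hit it with a word of exactly $c-j^*$ generators, observing that only the $j^*$-piece can survive into $\mcL_c\otimes\Fq$ by Condition~\ref{con2}. Collecting all such words into the linear maps $\Phi_{j^*}$ and computing $\ker\Phi_{j^*}^{\Gri}=\mcL_{j^*}\cap Z_{c-j^*}=0$ via the identity $Z_i=\{x:\mcE^ix=0\}$ reduces the genericity statement to the nonvanishing of finitely many minors $\mu_{j^*}\in\Gri\setminus\{0\}$. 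Two small bonuses of your route: it does not require reducing $M$ to block-upper-triangular shape, since $W$ depends only on the lattice class; and it yields an explicit, $M$-independent description of the exceptional primes (those dividing $\prod_{j^*}\mu_{j^*}$), which the paper leaves implicit. The paper's version is shorter in text; yours is more self-contained and makes the uniformity in $M$ transparent, which is exactly the subtlety you correctly flag at the end.
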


\bp We proceed by induction on $c$, including the case $c=1$, which we
excluded in the previous arguments. Indeed, for this base case the
statement holds trivially (and for all~$\mfp$) as~$\delta = \Id_n$.
Assume thus that $c\geq 2$ and that the induction hypothesis
holds. Given $\mfp$ and~$M$, write $\Lambda$ for the lattice defined
by $M$ and set $\lri = \Gri_{\mfp}$. Without loss we may assume that
$M$ is a block-upper triangular matrix, i.e.\ composed of blocks
$M^{(ij)}\in\Mat_{n_i, n_j}(\lri)$, $i,j\in[c]$, with $M^{(ij)} = 0$
unless $j\geq i$.

\revtwo{The claim is that, for almost all $\mfp$ and all $M$, the
  minimal $\mfp$-adic valuation of the entries of $M$ is equal to that
  of its last block column: if $\pi$ divides one of the block
  matrices~$M^{(ic)}$, $i\in[c]$, then it divides the whole
  matrix~$M$. By induction hypothesis (and excluding finitely
  many~$\mfp$), we may assume that the matrix
  $M' := (M^{(ij)})_{i,j\in[2,c]}\in\Mat_{n-N_{c-1}}(\lri)$, defining
  the lattice $\Lambda \cap Z_{c-1}(\lri)$, where
  $Z_{c-1}(\lri) = Z_{c-1}\otimes_{\Gri}\lri$, has the desired
  property that if $\pi$ divides the last block column of $M'$, viz.\
  one of the block matrices~$M^{(ic)}$, $i\in[2,c]$, then $\pi$
  divides the whole matrix $M'$. So assume that $\pi$ divides the last
  block column of $M$ and thus that $M^{(ij)}\equiv 0 \bmod \pi$ for
  $i\geq 2$ or $j=c$, but that there exists $j\in[c-1]$ such
  that~$v(M^{(1j)})=0$.  Thus one of the first $n_1$ rows of $M$ is
  nonzero modulo $\pi$, defining an element
  $x\in \mcL(\lri)/Z_{1}(\lri)$ which is nonzero modulo~$\pi$.  But
  for all but finitely many $\mfp$ the reduction modulo $\mfp$ of
  $Z_1(\lri)$ is the centraliser of
  $\mcL(\lri/\mfp) = \mcL \otimes_{\lri} \lri/\mfp$. This establishes
  the claim.}  \ep

Assume from now that $\mfp$ satisfies the conclusions of
Lemma~\ref{lem:delta}.

\begin{corollary}\label{cor:pos}
  For every $\mcC \in \mcV_n/\sim$,
$$\Xi_{\mcC_{\geq0}}(s) = |\mcL(\lri):\Lambda_{0,\max}|^{-s}
  \frac{1}{1-q^{-s\sum_{i=1}^{c-1}N_i}}.$$
\end{corollary}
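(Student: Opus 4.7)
The plan is to exhibit, in each $\delta$-class $\mcC$, an explicit matrix representative for the maximal lattice in each class $[\Lambda_0\delta^m]$ with $m\geq 0$, and then to sum the resulting geometric series. The role of Lemma~\ref{lem:delta} will be to guarantee that right-multiplication by $\delta$ does not shrink the minimal valuation $v$ of a matrix representing an $\mcE(\lri)$-submodule; this is exactly what is needed to identify the maximal representatives.

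More precisely, let $M_0 \in \GL_n(K_\mfp) \cap \Mat_n(\lri)$ be a matrix representing $\Lambda_{0,\max}$. Since $\Lambda_{0,\max}$ is maximal in its homothety class, $v(M_0) = 0$; since $[\Lambda_0] \leq_{\mcE(\lri)} \mcL(\lri)$, we have $M_0 \leq_{\mcE(\lri)} \mcL(\lri)$. I claim by induction on $m \geq 0$ that $v(M_0\delta^m) = 0$ and that $[M_0 \delta^m] \leq_{\mcE(\lri)} \mcL(\lri)$. The base case $m=0$ is immediate. For the induction step, Lemma~\ref{lem:delta} applied to $M_0 \delta^m$ yields $v(M_0\delta^{m+1}) = v(M_0 \delta^m) = 0$, while $[M_0\delta^{m+1}] = [\Lambda_0 \delta^{m+1}] \leq_{\mcE(\lri)} \mcL(\lri)$ by the very definition of $\Lambda_0$ generating $\mcC_{\geq 0}$.

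Since $v(M_0\delta^m) = 0$, the matrix $M_0 \delta^m$ represents $(\Lambda_0 \delta^m)_{\max}$. Using $\det\delta = \pi^{\sum_{i=1}^{c-1} N_i}$ and the index formula~\eqref{equ:index}, I compute
\begin{equation*}
|\mcL(\lri) : (\Lambda_0\delta^m)_{\max}| = |\mcL(\lri):\Lambda_{0,\max}| \cdot q^{m\sum_{i=1}^{c-1}N_i}.
\end{equation*}
Substituting into the definition~\eqref{equ:Z.nonneg} of $Z_{\mcC_{\geq 0}}(s)$ and summing the geometric series yields the claimed identity.

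The only nontrivial ingredient is the inductive step, which is simply an application of Lemma~\ref{lem:delta}; no further obstacle is expected. Note that this is precisely the place where the hypothesis that $\mfp$ satisfies the conclusion of Lemma~\ref{lem:delta}, in force since just before the statement, is used.
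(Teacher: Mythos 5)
Your proof is correct and follows essentially the same route as the paper's: both rely on Lemma~\ref{lem:delta} to conclude that $\Lambda_{0,\max}\delta^m$ remains maximal, whence $|\mcL(\lri):\Lambda_{0,\max}\delta^m| = |\mcL(\lri):\Lambda_{0,\max}|\,q^{m\sum_{i=1}^{c-1}N_i}$, and then sum the geometric series. You simply make explicit the straightforward induction on $m$ that the paper leaves implicit when it asserts $\Lambda_{0,\max}\delta^m = (\Lambda_{0,\max}\delta^m)_{\max}$ "by Lemma~\ref{lem:delta}".
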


\bp For all $m\in\N_0$ we have $\Lambda_{0,\max}\delta^m = \left(
  \Lambda_{0,\max}\delta^m \right)_{\max}$ by Lemma~\ref{lem:delta}.
Hence
$$|\mcL(\lri):\Lambda_{0,\max}\delta^m| =
|\mcL(\lri):\Lambda_{0,\max}|q^{m\sum_{i=1}^{c-1}N_i}$$ and therefore
\begin{multline*}
  \Xi_{\mcC_{\geq 0}}(s) = \sum_{[\Lambda]\in \mcC_{\geq
      0}}|\mcL(\lri):\Lambda_{\max}|^{-s} = \sum_{m=0}^\infty | \mcL(\lri): \Lambda_{0,\max}\delta^m |^{-s} =\\
  |\mcL(\lri):\Lambda_{0,\max}|^{-s}\sum_{m=0}^\infty
  q^{-s m\sum_{i=1}^{c-1}N_i} = |\mcL(\lri):\Lambda_{0,\max}|^{-s}
  \frac{1}{1-q^{-s\sum_{i=1}^{c-1}N_i}}.\qedhere
\end{multline*} \ep

We set $\wtdelta = \pi^{c-1}\delta^{-1}$ and note that $\det
\wtdelta = \pi^{\sum_{i=1}^{c-1}(n-N_i)}$ and $\mcC_{<0} = \{
[\Lambda_0 \wtdelta^m] \mid m > 0\}$. We seek to describe the `weight
function' $w:\mcV_n\rarr \N_0$ defined by the property that
$w|_{\SubMod_{\mcE(\lri)}}=0$ and, for each $\delta$-class $\mcC$,
\begin{align}
  \Xi_{\mcC_{<0}}(s) &:=
  \sum_{[\Lambda]\in\mcC_{<0}}|\mcL(\lri):\Lambda_{\max}|^{-s}q^{-s n
    w([\Lambda])} =
  \sum_{m=1}^\infty|\mcL(\lri):\Lambda_{0,\max}\wtdelta^m|^{-s}\label{equ:weight}.
\end{align}

Note that then 
$$\Xi_{\mcC_{<0}}(s) = 
  |\mcL(\lri):\Lambda_{0,\max}|^{-s} \sum_{m=1}^\infty q^{-s m
    \sum_{i=1}^{c-1}(n-N_i)} = |\mcL(\lri):\Lambda_{0,\max}|^{-s}
  \frac{q^{-s\sum_{i=1}^{c-1}(n-N_i)}}{1-q^{-s\sum_{i=1}^{c-1}(n-N_i)}}.$$

To obtain this analogue of the formula for $\Xi_{\mcC_{\geq0}}(s)$
established in Corollary~\ref{cor:pos}, we need to take care to define
$w$ judiciously: the point here is that, whilst with
$\Lambda_{0,\max}$ also $\Lambda_{0,\max}\delta^m$ is maximal
(cf.\ Lemma~\ref{lem:delta}), the lattice
$\Lambda_{0,\max}\wtdelta^m$ is not, in general, as the following
example illustrates.

\begin{example}
  Let $\mcL(\lri) = \la x,y,z \mid [x,y]=z, [x,z]=[y,z]=0\ra_\lri$ be
  the Heisenberg $\lri$-Lie lattice. Consider the matrices
$$ M_1 = \left( \begin{matrix}1&&\\&1&\\&&\pi\end{matrix}\right),
\quad M_2 =
\left( \begin{matrix}1&&1\\&1&1\\&&\pi\end{matrix}\right)\in\Mat_3(\lri),$$
encoding sublattices of index $q$ in $\mcL(\lri)\cong \lri^3$ \revone{with
respect to the $\lri$-basis $(x,y,z)$}.  Clearly neither of them are
ideals of $\mcL(\lri)$ (in fact, $\wtm_1(M_1) = \wtm_1(M_2)=1$) and
their homothety classes are $\delta$-inequivalent. Considering
$$ M_1\delta =
\left( \begin{matrix}\pi&&\\&\pi&\\&&\pi\end{matrix}\right)=\pi\Id_3,
\quad M_2\delta =
\left( \begin{matrix}\pi&&1\\&\pi&1\\&&\pi\end{matrix}\right),$$
it is clear that whilst $M_2\delta$ is maximal, $M_1\delta$ is
not. Thus $w([M_1]) = 0$ but $w([M_2])=1$.
\end{example} 

Given  $w$ to the requirements of \eqref{equ:weight}, it
suffices to show that the function
\begin{equation*}
  A^{\nl}(s) := \sum_{[\Lambda]\in \mcV_n}|\mcL(\lri):\Lambda_{\max}|^{-s}
  q^{-snw([\Lambda])}
\end{equation*}
satisfies the functional equation
\begin{equation}\label{equ:funeqA}
\left.A^{\nl}(s) \right|_{q\rarr q^{-1}} =
(-1)^{n-1}q^{\binom{n}{2}}A^{\nl}(s).
\end{equation}
Indeed, $A^{\nl}(s)$ is, by design of $w$, as \revone{a} Dirichlet generating
series with nonnegative coefficients, divisible by
$$\underbrace{\frac{1}{1-q^{-s\sum_{i=1}^{c-1}N_i}}}_{\textup{sum over
    $\mcC_{\geq0}$}} +
\underbrace{\frac{q^{-s\sum_{i=1}^{c-1}(n-N_i)}}{1-q^{-s\sum_{i=1}^{c-1}(n-N_i)}}}_{\textup{sum
    over $\mcC_{<0}$}} =
\frac{1-q^{-s(c-1)n}}{(1-q^{-s\sum_{i=1}^{c-1}N_i})(1-q^{-s\sum_{i=1}^{c-1}(n-N_i)})}.$$
We want, however, to isolate the geometric progression
${(1-q^{-s\sum_{i=1}^{c-1}N_i})^{-1}}$ taking care of the enumeration
over the $\mcC_{\geq0}$. Thus, by \eqref{equ:zeta.nonneg},
\begin{align*}
  \zeta_{\mcE(\lri)\car \mcL(\lri)}(s) &= \frac{1}{1-q^{-ns}}\cdot
  \frac{1}{1-q^{-s\sum_{i=1}^{c-1}N_i}}\cdot
  \frac{(1-q^{-s\sum_{i=1}^{c-1}N_i})(1-q^{-s\sum_{i=1}^{c-1}(n-N_i)})}{1-q^{-s(c-1)n}}A^{\nl}(s)\\ &=
  \frac{1-q^{-s\sum_{i=1}^{c-1}(n-N_i)}}{(1-q^{-ns})(1-q^{-s(c-1)n})}A^{\nl}(s)
  = \frac{1}{1-q^{-ns}} \Xi(s) A^{\nl}(s).
\end{align*}
Given \eqref{equ:funeqA}, the functional
equation~\eqref{equ:funeq.submodule} follows as, trivially,
\begin{equation}\label{equ:funeqtrivial}
  \left. \frac{1-q^{-s\sum_{i=1}^{c-1}(n-N_i)}}{(1-q^{-ns})(1-q^{-s(c-1)n})}
  \right|_{q \rarr q^{-1}} = -q^{-s\sum_{i=0}^{c-1}N_i}
  \frac{1-q^{-s\sum_{i=1}^{c-1}(n-N_i)}}{(1-q^{-ns})(1-q^{-s(c-1)n})}.
\end{equation}

It remains to devise the weight function $w$ to the requirement of
\eqref{equ:weight}.
\begin{definition}
  Given $M\in\GL_n(K_{\mfp})\cap \Mat_n(\lri)$ corresponding to a
  maximal lattice $\Lambda$, write $M=\left( M^{(ij)}
  \right)_{i,j\in[c]}$ as in the proof of
  Lemma~\ref{lem:delta}. Define
$$m_2([\Lambda]) = \min\{
v(M^{(ic)})\mid i\in [c]\}.$$
\end{definition}

Informally speaking, $m_2([\Lambda])$ is the valuation of the last
$n_c$ columns of a matrix $M$ representing $\Lambda_{\max}$. Recall
the ``distance'' function $\wt{m}_1:\mcV_n \rarr \N_0$; see
Definition~\ref{def:dist}.

\begin{lemma}\label{lem:m}
  Equation~\eqref{equ:weight} holds with $w([\Lambda]) = (c-1)
  \wtm_1([\Lambda]) - m_2([\Lambda])$.
\end{lemma}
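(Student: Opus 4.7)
The plan is to parameterize $\mcC_{<0}$ by the positive integers, writing $[\Lambda_m] := [\Lambda_{0,\max}\wtdelta^m]$ for $m\geq 1$, and to verify the term-by-term identity
$$|\mcL(\lri):\Lambda_{m,\max}|^{-s}\, q^{-sn w([\Lambda_m])} = |\mcL(\lri):\Lambda_{0,\max}\wtdelta^m|^{-s}$$
for each $m\geq 1$, where $w([\Lambda_m]) := (c-1)\wtm_1([\Lambda_m]) - m_2([\Lambda_m])$. Summing over $m$ then gives \eqref{equ:weight}. The content of the identity is that the exponent $w$ exactly compensates for the fact that $\Lambda_{0,\max}\wtdelta^m$ is, in general, \emph{not} the maximal representative of its homothety class.

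To compute $\wtm_1([\Lambda_m])$, I exploit that $\wtdelta = \pi^{c-1}\delta^{-1}$, whence $[\Lambda_m\delta^k] = [\Lambda_{0,\max}\delta^{k-m}]$ lies in $\SubMod_{\mcE(\lri)}$ iff $k\geq m$, by the defining property of $\Lambda_0$ as generator of $\mcC_{\geq 0}$; Proposition~\ref{prop:1} then yields $\wtm_1([\Lambda_m]) = m$. To compute $m_2([\Lambda_m])$, I fix an integer representative $M_0$ of $\Lambda_{0,\max}$ (so $v(M_0)=0$) with block decomposition as in Condition~\ref{con2}, and observe that a direct calculation gives $\wtdelta = \diag(1^{(n_1)},(\pi)^{(n_2)},\ldots,(\pi^{c-1})^{(n_c)})$, so right-multiplication by $\wtdelta^m$ scales the $j$-th block column of $M_0$ by $\pi^{m(j-1)}$. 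Writing $v_m := v(M_0\wtdelta^m)$, the maximal representative of $[\Lambda_m]$ is $M_m := \pi^{-v_m}M_0\wtdelta^m$, whose last block column has minimum valuation $m(c-1) + m_2([\Lambda_0]) - v_m$. The identity $m_2([\Lambda_0]) = 0$ is itself a direct consequence of Lemma~\ref{lem:delta} applied to $M_0$: since $\delta$ leaves the last block column unchanged while strictly increasing the valuations of all other block columns, the equality $v(M_0\delta) = v(M_0) = 0$ forces the minimum of the last block column to be $0$. Hence $m_2([\Lambda_m]) = m(c-1) - v_m$.

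Assembling these computations,
$$w([\Lambda_m]) = (c-1)m - \bigl(m(c-1) - v_m\bigr) = v_m.$$
On the index side, $\Lambda_{m,\max} = \pi^{-v_m}(\Lambda_{0,\max}\wtdelta^m)$ contains $\Lambda_{0,\max}\wtdelta^m$ with $\lri$-module quotient of order $q^{nv_m}$, so $|\mcL(\lri):\Lambda_{m,\max}| = q^{-nv_m}|\mcL(\lri):\Lambda_{0,\max}\wtdelta^m|$. Raising to the $-s$-th power and multiplying by $q^{-snw([\Lambda_m])} = q^{-snv_m}$ precisely cancels the factor $q^{nsv_m}$ coming from the index, yielding the desired term-by-term identity. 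The main obstacle is the careful bookkeeping of $\mfp$-adic valuations across the block-column decomposition under $\wtdelta^m$: verifying the crucial cancellation $m_2([\Lambda_m]) + v_m = m(c-1)$, which rests on the block structure of $\wtdelta$ dictated by Condition~\ref{con2} together with the key observation $m_2([\Lambda_0]) = 0$ coming from Lemma~\ref{lem:delta}.
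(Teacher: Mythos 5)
Your proof is correct and is essentially the same argument as the paper's, merely traversed in the opposite direction: the paper starts from the maximal representative $M$ of an arbitrary $[\Lambda]\in\mcC$ and rewrites $\pi^{-m_2([\Lambda])}\left(M\delta^{\wtm_1([\Lambda])}\right)\wtdelta^{\wtm_1([\Lambda])} = M\pi^{(c-1)\wtm_1([\Lambda])-m_2([\Lambda])}$, whereas you start from $\Lambda_{0,\max}$ and scale $\Lambda_{0,\max}\wtdelta^m$ down to its maximal representative. Both hinge on the identity $\delta\wtdelta = \pi^{c-1}$ together with the fact (which you usefully make explicit as $m_2([\Lambda_0])=0$, and which the paper leaves implicit in its unproved assertion that $v(M\delta^{\wtm_1([\Lambda])}) = m_2([\Lambda])$) that Lemma~\ref{lem:delta} forces the last block column of a maximal representative of an $\mcE(\lri)$-stable lattice to have valuation zero.
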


\bp Denote by $[\Lambda_0]$ the generator of $\mcC_{\geq 0}$ and by
$\Lambda_{0,\max}\in[\Lambda_0]$ its unique maximal element. It
suffices to observe that $v( M \delta^{\wtm_1([\Lambda])}) =
m_2([\Lambda])$. Hence the matrix $M \delta^{\wt{m}_1([\Lambda])}$
corresponds to the lattice $\pi^{{m}_2([\Lambda])}\Lambda_{0,\max}$,
whence $$\pi^{-m_2([\Lambda])} \left( M
\delta^{\wt{m}_1([\Lambda])} \right) \wtdelta^{\wt{m}_1([\Lambda])} = M
\pi^{(c-1)\wt{m}_1([\Lambda]) - \wt{m}_2([\Lambda])}$$ corresponds to
$\Lambda_{0,\max}\wtdelta^{\wt{m}_1([\Lambda])}$.\ep

For later reference we record another formula for the invariant
$m_2$. Write $M=D \alpha^{-1}$ as above.  Setting, for $r\in[n]$,
$$v^{(2)}_r(\alpha) := \min\left\{ v\left((\alpha^{\textup{adj}})_{\rho\sigma}\right) \mid \rho \geq r, \sigma\in\, ]N_1,n]\right\},$$
we obtain
\begin{equation*}
  m_2([\Lambda]) = \min\left\{\sum_{\iota\in I}r_{\iota}, \sum_{r \leq \iota\in I}r_{\iota} +
    v^{(2)}_r(\alpha) \mid r\in[n]\right\}.
\end{equation*}

To compute $A^{\nl}(s)$ we need, given a lattice class
$[\Lambda]\in\mcV_n$ with $\nu([\Lambda])=(I,\bfr_I)$, to keep track
of the quantity
\begin{multline*}
  q^{-s\left( (\sum_{\iota\in I}\iota r_{\iota}) +
    nw([\Lambda])\right)} = q^{-s\left( (\sum_{\iota\in I}\iota
    r_\iota) + n((c-1)\wtm_1([\Lambda]) - m_2([\Lambda]))\right)}
  =\\ q^{-s\left( (\sum_{\iota\in I} r_{\iota}(\iota +
    n(c-1)))-n((c-1)m_1([\Lambda]) + m_2([\Lambda]))\right)}.
\end{multline*}
(Here we used \eqref{equ:index}, Lemma~\ref{lem:m}, and
\eqref{equ:m1.tilde}.)  To this end we define, given $I\subseteq
[n-1]_0$ and $\bfr\in\N^{|I|}$ as above, for
$\bfm=(m_1,m_2)\in\N_0^2$,
\begin{equation*}
 \mcN^{\nl}_{I,\bfr,\bfm} = \left| \left\{[\Lambda] \in \mcV_n \mid
 \nu([\Lambda]) = (I,\bfr),\, m_i([\Lambda]) = m_i, \,
 i\in\{1,2\}\right\}\right|
\end{equation*}
and set
\begin{equation}\label{def:AidI}
  A_{I}^{\nl}(s) = \sum_{\bfr\in\N^{|I|}} q^{-s\sum_{\iota\in
      I}r_\iota(\iota+n(c-1))}
  \sum_{\bfm=(m_1,m_2)\in\N_0^2}\mcN^{\nl}_{I,\bfr,\bfm}q^{sn\left((c-1)m_1+m_2\right)},
\end{equation} 
so that $A^{\nl}(s) = \sum_{I\subseteq [n-1]}A_I^{\nl}(s)$.

\subsubsection{$\mfp$-Adic integration}
To establish the functional equation \eqref{equ:funeqA} for
$A^{\nl}(s)$ we express each of the functions $A_I^{\nl}(s)$ in terms
of $\mfp$-adic integrals of the form \cite[(6)]{Voll/10} that satisfy
the hypotheses of \cite[Theorem~2.3]{Voll/10}. \revtwo{To this end we
  consider multivariate $\mfp$-adic integrals $Z^{\nl}_{I}(\bfs)$
  (cf.\ \eqref{equ:Z}) on $\mfp^{|I|} \times \Gamma$ whose integrands
  are defined using carefully chosen polynomial mappings. The latter
  are designed in such a way that the numbers
  $\mcN^{\nl}_{I,\bfr,\bfm}$ entering \eqref{def:AidI} may easily be
  expressed in terms of the (Haar) measures $\mu^{\nl}_{I,\bfr,\bfm}$
  of the subsets of $\mfp^{|I|} \times \Gamma$ on which the integrands
  are constant; cf.\ Lemma~\ref{lem:mass=zahl}. The generating
  functions $A_I^{\nl}(s)$ may be recovered from (suitable
  normalizations of) the integrals $Z^{\nl}_I(\bfs)$ upon a
  judiciously chosen affine-linear univariate substitution of
  variables; cf.\ \eqref{equ:A=Z}.  

  This type of connection between generating functions and $\mfp$-adic
  integrals has been exploited numerous times before. It links, for
  instance, Igusa's local zeta function (cf.\
  Section~\ref{subsec:ilzf}) with the problem of counting solutions to
  polynomial congruences modulo powers of $\mfp$; cf.\
  \cite[Section~1.2]{Denef/91}. The formalism set up in
  \cite[Section~2.2]{Voll/10} is based on the very same principle.  }

To return to the problem at hand we define, for $i,r\in[n]$, \revtwo{sets of polynomials}
\begin{align*}
\bff^{(1)}_{i,r}(\bfy) &= \left\{ \left( \mcR_{(\iota)}(\bfy)\right)_{\rho\sigma} \mid \iota \leq i, \rho \geq r, \sigma \in [d]\right\},\\
\bff^{(2)}_{r}(\bfy) &= \left\{ \left(\bfy^{\textup{adj}})\right)_{\rho\sigma} \mid \rho\geq r, \sigma\in\,]N_1,n]\right\},
\end{align*} 
and set, for $I\subseteq[n-1]$,
\begin{alignat*}{2}
  \bfg^{(1)}_{n,I}(\bfx,\bfy) &= \left\{\prod_{\iota\in I}x_{\iota}\right\} \cup &\bigcup_{(i,r)\in[n]^2}\left( \prod_{\iota\in I}x_\iota^{\delta_{r \leq \iota} + \delta_{i > \iota}}\right) &\bff^{(1)}_{i,r}(\bfy),\\
  \bfg^{(2)}_{n,I}(\bfx,\bfy) &= \left\{\prod_{\iota\in
      I}x_{\iota}\right\} \cup & \bigcup_{r\in[n]}\left(
    \prod_{\iota\in I}x_\iota^{\delta_{r \leq \iota} }\right) & \bff^{(2)}_{r}(\bfy),
\end{alignat*}
and, for $\kappa\in[n-1]$,
$$\bfg_{\kappa,I}(\bfx,\bfy) = \left\{\prod_{\iota\in I}x_{\iota}^{\delta_{\iota\kappa}}\right\}.$$
With this data we define the $\mfp$-adic integral
\begin{multline}\label{equ:Z}
Z^{\nl}_{I}(\bfs) = Z^{\nl}_{I}((s_\iota)_{\iota\in
  I},s^{(1)}_n,s^{(2)}_n) :=\\\int_{\mfp^{|I|} \times \Gamma} \|
\bfg^{(1)}_{n,I}(\bfx,\bfy) \|^{s^{(1)}_n} \|
\bfg^{(2)}_{n,I}(\bfx,\bfy) \|^{s^{(2)}_n}
\prod_{\kappa\in[n-1]}\|g_{\kappa,I}(\bfx,\bfy)\|^{s_\kappa} |\tud
\bfx_I| | \tud \bfy|.
\end{multline}
 Here, $\bfs = (s_1,\dots,s_{n-1},s^{(1)}_n,s^{(2)}_n)$ is a vector of
 complex variables; note, however, that $s_{\kappa}$ occurs on the
 right-hand side if and only if $\kappa\in I$. Moreover, $|\tud
 \bfx_I| | \tud \bfy|$ denotes the Haar measure, normalized such that
 the domain of integration has measure~$q^{-|I|}\mu(\Gamma)$, where
 $\mu(\Gamma) = \prod_{i=1}^n(1-q^{-i})$. \revtwo{By $\| \cdot \|$ we
   denote the $\mfp$-adic (maximum) norm.} The integral
 $Z^{\nl}_{I}(\bfs)$ is, by design, of the form
 \cite[(6)]{Voll/10}. Discarding at most finitely many primes, we may
 assume that the assumptions of \cite[Theorem~2.2]{Voll/10} are
 satisfied. This will imply that the normalized integrals
\begin{equation*}
 \wt{Z^{\nl}_{I}}(\bfs) :=
\frac{Z^{\nl}_{I}(\bfs)}{(1-q^{-1})^{|I|}\mu(\Gamma)}
\end{equation*} (cf.\
\cite[(10)]{Voll/10}) satisfy the `inversion properties' established
in \cite[Theorem~2.3]{Voll/10}.  Hence the sum $\wt{Z^{\nl}}(\bfs) :=
\sum_{I\subseteq[n-1]}\binom{n}{I}_{q^{-1}} \wt{Z^{\nl}_I}(\bfs)$
(cf.\ \cite[(16)]{Voll/10}) satisfies the functional equation
\begin{equation}\label{equ:funeq.ZItilde}
\left.\wt{Z^{\nl}}(\bfs)\right|_{q\rarr q^{-1}} = (-1)^{n-1}q^{\binom{n}{2}}\wt{Z^{\nl}}(\bfs);
\end{equation}
cf.\ \cite[Cor.~2.3]{Voll/10}. Here, $\binom{n}{I}_X\in\Z[X]$ denotes
the Gaussian multinomial coefficient.

It remains to show that, for each $I\subseteq [n-1]$, the generating
function $A_{I}^{\nl}(s)$ is indeed obtainable from the $\mfp$-adic
integral $Z^{\nl}_{I}(\bfs)$ by a suitable specialization of the
variables~$\bfs$. We start by measuring the sets on which the
integrand of $Z^{\nl}_I(\bfs)$ is constant. More precisely we set, for
$\bfm=(m_1,m_2)\in\N_0^2$ and $\bfr\in\N^{|I|}$,
$$\mu^{\nl}_{I,\bfr,\bfm} := \mu\left\{(\bfx,\bfy) \in \mfp^{|I|} \times \Gamma \mid \forall \iota\in I: v(x_\iota) = r_{\iota}, \bfm(\bfx,\bfy) = (m_1,m_2)\right\},$$
where $\bfm(\bfx,\bfy) = \left(
  \bfm(\bfx,\bfy)_1,\bfm(\bfx,\bfy)_2\right)$ and
\begin{align*}
  \bfm(\bfx,\bfy)_1 &= \min \left\{ \sum_{\iota\in I}r_{\iota},
  \sum_{r\leq \iota\in I}v(x_\iota) + \sum_{i > \iota\in I}v(x_\iota)
  + v^{(1)}_{i,r}(\bfy) \mid
  (i,r)\in[n]^2\right\},\\ \bfm(\bfx,\bfy)_2 &= \min \left\{
  \sum_{\iota\in I}r_{\iota}, \sum_{r\leq \iota\in I}v(x_\iota)
  \phantom{ + \sum_{i > \iota\in I}v(x_\iota), }+ v^{(2)}_{r}(\bfy)
  \mid r\in[n]\right\}.
\end{align*}

Then, by design,
\begin{equation}\label{equ:Zsum}
 \wt{Z^\nl_I}(\bfs) =
 \frac{1}{(1-q^{-1})^{|I|}\mu(\Gamma)}\sum_{\bfr\in\N^{|I|}}
 q^{-\sum_{\iota\in I}s_\iota r_\iota}\sum_{\bfm = (m_1,m_2)
   \in\N_0^2}\mu^{\nl}_{I,\bfr,\bfm}q^{-s^{(1)}_nm_1 - s^{(2)}_nm_2}.
\end{equation} 
The numbers $\mu^{\nl}_{I,\bfr,\bfm}$ are closely related to the
natural numbers $\mcN^{\nl}_{I,\bfr,\bfm}$ we are looking to
control. 
\begin{lemma}\label{lem:mass=zahl}
\begin{equation}\label{equ:Mass=Zahl}
  \mcN^{\nl}_{I,\bfr,\bfm} =
  \frac{\binom{n}{I}_{q^{-1}}}{(1-q^{-1})^{|I|}\mu(\Gamma)}
  \mu^{\nl}_{I,\bfr,\bfm} q^{\sum_{\iota\in I}r_{\iota}(\iota(n-\iota)
    + 1)}.
\end{equation}
\end{lemma}

\begin{proof}
Analogous to \cite[Lemma~3.1]{Voll/10}.
\end{proof}
Thus, combining \eqref{def:AidI}, \eqref{equ:Mass=Zahl}, and
\eqref{equ:Zsum}, we obtain
\begin{align}
  A^{\nl}_I(s) &= \sum_{\bfr\in\N^{|I|}} q^{-s\sum_{\iota\in
      I}r_\iota(\iota+n(c-1))} \sum_{\bfm = (m_1,m_2)
    \in\N_0^2}\mcN^{\nl}_{I,\bfr,\bfm}q^{sn\left((c-1)m_1+m_2\right)}\nonumber\\ &=
  \frac{\binom{n}{I}_{q^{-1}}}{(1-q^{-1})^{|I|}\mu(\Gamma)}
  \sum_{\bfr\in\N^{|I|}} q^{-\sum_{\iota\in I}r_\iota(s(\iota+n(c-1)) -
    \iota(n-\iota)-1)}
  \sum_{\bfm\in\N_0^2}\mu^{\nl}_{I,\bfr,\bfm}q^{sn\left((c-1)m_1+m_2\right)}\nonumber\\ &=
  \binom{n}{I}_{q^{-1}} \wt{Z^{\nl}_I} \left(
  \left(s(\iota+n(c-1))-\iota(n-\iota)-1\right)_{\iota\in I},-ns(c-1)
  , -ns \right).\label{equ:A=Z}
\end{align}
The functional equation \eqref{equ:funeqA} for $A^{\nl}(s) =
\sum_{I\subseteq [n-1]}A^{\nl}_I(s)$ now follows from
\eqref{equ:funeq.ZItilde}. This concludes the proof of
Theorem~\ref{thm:main}.

\subsection{Proof of Corollary~\ref{cor}}\label{subsec:cor}

The proof of Theorem~\ref{thm:main} expresses the relevant local
submodule zeta functions in terms of $\mfp$-adic integrals which are
known to be representable by formulae of \emph{Denef-type}. In other
words, there exist algebraic varieties $V_i$, defined over~$\Gri$, and
rational functions $W_i(X,Y)\in\Q(X,Y)$ for $i=1,\dots,M$, such that
the following holds. For almost all $\mfp$ and all $\Lri$ we have
$$\zeta_{\mcE(\Lri)\car \mcL(\Lri)}(s) = \sum_{i=1}^M
|\overline{V_i}(\Lri/\mfP)| W_i(q^f,q^{-fs})=
\frac{P_{\Lri}(q^{-fs})}{Q_{\Lri}(q^{-fs})}$$ for coprime polynomials
$P_\Lri(Z), Q_\Lri(Z)\in \Z[Z]$, say
$$P_{\Lri}(Z) = \sum_{i=0}^{\deg_Z P_{\Lri}}\alpha_iZ^i, \quad
Q_{\Lri}(Z)=\prod_{j=1}^r(1-q^{f a_j}Z^{b_j}) = \sum_{i=0}^{\deg_Z
  Q_{\Lri}}\beta_i Z^i$$ for integers $a_j\in\N_0$, $b_j\in\N$,
$j\in[r]$. Note that the coefficients $\beta_i$ of $Q_{\Lri}$ are
uniformly given by polynomials in~$q^f$ with integral coefficients
which are independent of~$\Lri$. (The pair $(\mcL,\mcE)$ is almost
uniform if and only the same holds for the coefficients $\alpha_i$ of
$P_{\Lri}$.) Moreover, the degree $\delta_2 := \deg_Z Q_{\Lri}$ is
independent of $\Lri$.  Note that $\alpha_0=\beta_0=1$, as the
generating series $\zeta_{\mcE(\Lri)\car \mcL(\Lri)}(s)$ has evidently
constant term~$1$. The functional equation \eqref{equ:funeq.submodule}
implies that $\delta_1:=\deg_Z P_{\Lri}$, too, is independent of
$\Lri$ and there exist $c_1,c_2\in\N_0$, $\sigma_1,
\sigma_2\in\{0,1\}$ satisfying $c_2-c_1=\binom{n}{2}$ and $\sigma_1 +
\sigma_2 \equiv n \bmod(2)$ such that
\begin{alignat*}{2}
  \left. \alpha_i \right|_{q\rarr q^{-1}} &= (-1)^{\sigma_1}
  q^{-fc_1}\alpha_{\delta_1-i},& \quad \textrm{ for }i&\in [\lfloor
    \delta_1/ 2 \rfloor], \\ \left. \beta_i \right|_{q\rarr q^{-1}} &=
  (-1)^{\sigma_2} q^{-fc_2}\beta_{\delta_2-i},& \quad \textrm{ for
  }i&\in [\lfloor \delta_2/ 2 \rfloor], \end{alignat*} and $ \delta_1
- \delta_2 = \deg_{q^{-fs}}\left( \zeta_{\mcE(\Lri) \car
  \mcL(\Lri)}(s)\right) = - \sum_{i=0}^{c-1}N_i$. Corollary~\ref{cor}
follows.

\section{Necessary vs.\ sufficient conditions for local functional equations}\label{sec:nec.vs.suff}
Theorem~\ref{thm:main} gives sufficient criteria for generic local
functional equations for submodule zeta functions associated to
nilpotent algebras of endomorphisms. Moreover, all examples of such
submodule zeta functions for which we know explicit formulae for the
generic Euler factors are consistent with the speculation that the
hypotheses of Theorem~\ref{thm:main} are also necessary for local
functional equations of the form \eqref{equ:funeq.submodule} to
hold. An analogy with Igusa's local zeta function, however, sketched
in Section~\ref{subsec:ilzf}, suggests caution. An insufficient but
potentially necessary criterion for local functional equations for
zeta functions in terms of so-called reduced zeta functions is
discussed in Section~\ref{subsec:red}.

\subsection{Igusa's local zeta function} \label{subsec:ilzf} The proof
of Theorem~\ref{thm:main} is ultimately inspired by Denef's and
Meuser's proof of a functional equation for Igusa's local zeta
functions associated to a homogeneous polynomial. Let
$F\in\Gri[X_1,\dots,X_n]$, let $\mfp$ be a nonzero prime ideal of
$\Gri$ of index $q$ in $\Gri$ and $\Lri$ be a finite extension of
$\Gri_{\mfp}$, with maximal ideal $\mfP$ of index $q^f$ in
$\Lri$. Igusa's local zeta function associated to $F$ at $\mfP$
is $$Z_{F,\mfP}(s) = \int_{\Lri^n}|F(\bfx)|_{\mfP}^s |\tud \bfx|,$$
where $|\;|_{\mfP}$ is the $\mfP$-adic norm and $|\tud \bfx|$ denotes
the additive Haar measure on $\Lri^n$, normalized such that $\Lri^n$
has measure~$1$. It is a rational function in~$q^{-fs}$. \revone{If}
$F$ is homogeneous of degree~$d$, then, for almost all~$\mfp$ and all
$\Lri$, the functional equation
\begin{equation}\label{equ:funeq.igusa}
  \left. Z_{F,\mfP}(s)\right|_{q \rarr q^{-1}} = q^{-fds}Z_{F,\mfP}(s)
\end{equation}
holds; cf.~\cite[Theorem~4]{DenefMeuser/91}. (Note that in
\cite{DenefMeuser/91} the inertia degree $f=f(\Lri,\Gri_\mfp)$ is
denoted by~$e$.) Here, as in Theorem~\ref{thm:main}, equation
\eqref{equ:funeq.igusa} is explained in terms of a suitable formula of
the form \eqref{equ:denef} and the Weil conjectures for smooth
\emph{projective} algebraic varieties over finite fields; cf.\
Remark~\ref{rem:invert}. Projectivity of the relevant varieties is a
consequence of the homogeneity of $F$. This line of argument breaks
down if $F$ is not homogeneous (\revone{and does not become
  homogeneous} after an affine transformation of polynomial
variables), and simple examples illustrate that functional equations
such as \eqref{equ:funeq.igusa} will not hold in general. We do not
know of general necessary conditions.

Note that the functional equation \eqref{equ:funeq.igusa} implies an
analogue of \eqref{equ:deg.t}, viz.\ that
\begin{equation}\label{equ:deg.igusa}
  \deg_{q^{-fs}}(Z_{F,\mfP}(s))=-d,
\end{equation}
a fact first proven in \cite{Denef/87}. We are not aware of general
results on the degree of Igusa's local zeta function for
nonhomogeneous polynomials, but again simple examples show that
\eqref{equ:deg.igusa} is not universal. This contrasts with
Conjecture~\ref{con:deg}, which predicts the degree of generic local
submodule zeta functions associated with nilpotent endomorphism
algebras, regardless of whether or not they satisfy functional
equations.

\subsection{Reduced zeta functions}\label{subsec:red}
In \cite{Evseev/09}, Evseev introduced ``reduced zeta functions''
associated to various enumeration problems pertaining to
finite-dimensional Lie algebras. His constructions apply quite
generally to zeta functions $Z(s) = \prod_{\mfp}Z_\mfp(s)$ which
satisfy Euler products, indexed by the nonzero prime ideals $\mfp$ of
a number ring, \revone{with Euler factors which are rational
  generating} functions in $q^{-s}$ whose coefficients can be
interpreted in terms of suitably ``geometric functions'' in the
respective residue fields, such as formulae of the
form~\eqref{equ:denef}. Informally speaking, the reduced zeta function
$Z_{\red}(T)$ is a rational function in $T$ obtained by ``setting
$q=1$'' in such formulae, whilst treating the parameter $q^{-s}$ as an
independent formal variable~$T$. If $Z(s)$ is ``almost uniform'' (cf.\
Definition~\ref{def:uni}), i.e.\ if there exists $W\in\Q(X,Y)$ such
that $Z_{\mfp}(s) = W(q,q^{-s})$ for almost all $\mfp$, then
$Z_{\red}(T)=W(1,T)$. The formal definition given in \cite{Evseev/09}
uses Euler-Poincar\'e characteristics in a motivic setting. We remark
that the concept of reduced zeta function seems subtly related to the
concept of topological zeta function; cf.~\cite{Rossmann/15}.

Under certain conditions, reduced zeta functions satisfy functional
equations upon inversion of $T$ which reflect geometric properties of
polyhedral cones. Indeed, in \cite{Evseev/09}, Evseev describes
sufficient conditions for a reduced zeta function to be the Hilbert
series $H(T)$ of a graded ring $R_{\mcC}$ associated to a rational
polyhedral cone $\mcC\subseteq \R_{\geq 0}^n$;
cf.\ \cite[Proposition~4.1]{Evseev/09}.  Up to a sign, the rational
function resulting from the inversion of $T$ is the Hilbert series of
the graded ring $R_{\mcC^{\circ}}$ associated to the \emph{interior}
of $\mcC$. A functional equation of the form
$$H(1/T) = \pm T^\beta H(T)$$ occurs if and only \revone{if} $\mcC$ has a unique
minimal integral interior vector. For further details and an
interpretation of these facts in terms of commutative algebra, viz.\
the language of Cohen-Macaulay and Gorenstein rings and modules, see
\cite[Chapter~1]{Stanley/96}. For an application in the context of
ideal zeta functions of nilpotent Lie lattices, see
Section~\ref{subsec:max.class}.

A functional equation for the reduced zeta function is, in general,
not sufficient for generic $\mfp$-adic functional equations; cf.\
Example~\ref{exa:M4.vs.Fil4}. If, however, $Z(s)$ is almost uniform as
defined above, then a functional equation for the reduced zeta
function $Z_{\red}(T)$ is a necessary condition for functional
equations for the $\mfp$-adic zeta functions
$Z_{\mfp}(s) = W(q,q^{-s})$. We speculate that this also holds without
the hypothesis of almost uniformity.

In the light of the analogy with the theory of Hilbert series
associated to rational polyhedral cones sketched above, it is tempting
to interpret the result of the $\mfp$-adic zeta function obtained by
``inverting $q$'' in terms of a generating function enumerating
``interior points'', too. In this spirit, in
\cite[Definition~4.26]{duSWoodward/08} du Sautoy and Woodward
introduced the notion of $p$-\emph{ideal} of a $\Zp$-Lie lattice, in
analogy to the notion of interior point of a polyhedral cone. Their
hope clearly was to interpret local functional equations in terms of
natural correspondences between the ideal- and $p$-ideal-lattices. Our
proof of Theorem~\ref{thm:main}, however, relies in an essential
manner on geometric properties of smooth projective algebraic
varieties, as established by the Weil conjectures (see
Remark~\ref{rem:invert}). It seems therefore that any such
interpretation of the local functional equations established in
Theorem~\ref{thm:main} would have to interpolate between these deep
algebro-geometric symmetries on the one hand and the symmetries
satisfied by Hilbert series associated to cones on the other.

\section{Ideal zeta functions of nilpotent Lie lattices}\label{sec:nil}

In this section we discuss applications of Theorem~\ref{thm:main} to
ideal zeta functions of nilpotent Lie lattices.  Ideal zeta functions
of nilpotent Lie rings have been introduced in \cite{GSS/88} as tools
in the study of normal subgroup growth of finitely generated nilpotent
groups. The technical tool facilitating this linearization is the
Mal'cev correspondence \revone{mentioned in
  Section~\ref{subsec:appI}}. Hence, all results in this section on
ideal zeta functions of nilpotent Lie lattices have immediate
consequences on normal subgroup zeta functions of finitely generated
nilpotent groups. Only in Corollary~\ref{cor:free} did we choose to
spell out such a consequence (of Theorem~\ref{thm:free}, in this
instance).

Assume that $\mcL$ is a nilpotent $\Gri$-Lie lattice satisfying
Assumption~\ref{ass} -- which, as we saw in Remark~\ref{rem:ass}, is
vacuous for Lie rings \revone{(i.e.\ if $\Gri=\Z$)} --, with Lie
bracket~$[\,,]$. Recall that $(Z_i)_{i=0}^c$ is the upper central
series of~$\mcL$. In particular, $c$ is the nilpotency class of $\mcL$
and $N_i = \rk_{\Gri}(\mcL/Z_i)$ for all $i\in[c]_0$. As noted in
Remark~\ref{rem:con.sat}, Condition~\ref{con} is trivially satisfied
if~$c\leq 2$. In these cases, Theorem~\ref{thm:main} confirms known
results. For $c=1$, see Example~\ref{exa:c=1}. For $c=2$,
Theorem~\ref{thm:main} in this setting is (a mild generalization of)
\cite[Theorem~C]{Voll/10}.

\subsection{Examples without functional
  equations}\label{subsec:nil.exa} That local functional equations
akin to \eqref{equ:funeq.submodule} may fail in nilpotency class
greater than $2$ was first discovered by Woodward.

\begin{example}\label{exa:Fil4}
  Consider the class-$4$-nilpotent Lie ring
$$\mcL = \textup{Fil}_4 = \la z,x_1,x_2,x_3,x_4 \mid [z,x_1] = x_2,
[z,x_2]=x_3,[z,x_3]=x_4,[x_1,x_2]=x_4\ra_{\Z}.$$
Here, as well as in comparable Lie lattice presentations throughout
the paper, \revone{products among generators} other than those
following -- by antisymmetry or the Jacobi identity -- from the given
ones are assumed to be trivial. In the given example this means that
$x_4$ is central and $x_3$ commutes with both $x_1$
and~$x_2$. \cite[Theorem~2.39]{duSWoodward/08} gives explicit formulae
for the local ideal zeta functions
$\zeta^{\nl}_{\textup{Fil}_4(\Zp)}(s)$, valid for all primes~$p$. They
are all given by a single rational function $W(X,Y)\in\Q(X,Y)$ upon
the substitution $X=p$, $Y=p^{-s}$ and do not satisfy a functional
equation of the form~\eqref{equ:funeq.submodule}. The associative
algebra $\mcE$ generated by $\ad(\Fil_{4})$ does not satisfy the
conditions of Theorem~\ref{thm:main}. Indeed, it is generated by
$\ad(z)$ and $\ad(x_1)$ which, with respect to the chosen cocentral
$\Z$-basis $(z,x_1,x_2,x_3,x_4)$ of $\Fil_4$ are represented by the
integral $5\times 5$-matrices
$$C_1 = \left( \begin{array}{cc|c|c|c}
  \phantom{0}&\phantom{0}&&&\\&&-1&&\\\hline &&&-1& \\ \hline
  &&&&-1\\\hline&&&&\end{array}\right)\quad \textup{ and } \quad C_2 =
\left( \begin{array}{cc|c|c|c}
  \phantom{0}&\phantom{0}&1&&\\&&&&\\\hline &&&\phantom{0}&1\\\hline
  &&&&\\\hline&&&&\end{array}\right),$$ respectively. Here, the block
structure reflects the decomposition $\mcL = \bigoplus_{i=1}^4 \mcL_i$
with $\mcL_1 = \la z,x_1\ra_{\Z}$ and $\mcL_i = \la x_i\ra_{\Z}$ for
$i\in\{2,3,4\}$. \revtwo{The matrix $C_2$ is not as prescribed in
  Condition~\ref{con2}, as it is} not supported solely on the first
block-off diagonal: $(C_2)_{34}\neq 0$. The failure of the functional
equation reflects the (easily verifiable) fact that no other
decomposition or choice of generators for $\mcE$ mitigates this
failure.
\end{example}

\begin{example}
Consider the class-$3$-nilpotent Lie ring
$$\mcL= \mfg_{6,6} = \la x_1,\dots,x_6 \mid [x_1,x_2]=x_4,
[x_1,x_3]=x_5, [x_1,x_4]=x_6, [x_2,x_3]=x_6\ra_{\Z}.$$
As observed in \cite[Example~4.58]{duSWoodward/08},
$Z_1 = \la x_5,x_6\ra_{\Z}$, $Z_2 = \la x_3,x_4,x_5,x_6\ra_{\Z}$,
$Z_3=\mcL$. Set $\mcL_1 = \la x_1,x_2\ra_{\Z}$,
$\mcL_2 = \la x_3,x_4\ra_{\Z}$, $\mcL_3 = \la x_5,x_6\ra_{\Z}$. Whilst
$c_1=\ad(x_1)$ and $c_2 = \ad(x_2)$ satisfy condition
\eqref{equ:shift}, $c_3 = \ad(x_3)$ does not. Moreover, this failure
is independent of the specific choice of complements
$\mcL_i$. Crucially, $c_3$ is not contained in the associative algebra
generated by $c_1$ and $c_2$. This is consistent with the fact that
the ideal zeta function of $\mfg_{6,6}$ does not satisfy the
conclusions of Theorem~\ref{thm:main};
see~\cite[Theorem~2.44]{duSWoodward/08}.
\end{example}

\begin{remark}\label{rem:local.base.ext}
  Like many others of their kind, the computations in
  \cite{duSWoodward/08} are only carried out for local rings
  $\lri=\Zp$. The resulting formulae, however, also cover the case of
  general finite extensions $\lri$ of $\Zp$; one replaces $p$ by the
  residue field cardinality~$q$. See \cite[Section~2.3]{Rossmann/15a}
  for a formal justification of this stability under local base
  extension.
\end{remark}

Numerous further formulae for local ideal zeta functions of nilpotent
Lie rings lacking a generic local functional equation can be found in
\cite[Section~2]{duSWoodward/08}. Like most formulae recorded
in~\cite{duSWoodward/08}, they are obtained by computations with
$p$-adic {cone integrals}; cf.\ \cite{duSG/00}.  In
\cite[Conjecture~4.5]{duSWoodward/08} du Sautoy and Woodward formulate
a sort of conjecture on such cone integrals that would imply
functional equations akin to~\eqref{equ:funeq.submodule}. The
conjecture's hypotheses, however, suffer from a degree of
illdefinedness (``Suppose that [two specified conditions] and some
as-yet-undetermined conditions hold.''). It takes indeed a ``cavalier
attitude to the incompleteness of Conjecture~4.5''
(\cite[p.~98]{duSWoodward/08}) to use it to speculate about the
occurrence of local functional equations of ideal zeta functions of
Lie rings.

The shortcomings of \cite[Conjecture~4.5]{duSWoodward/08}
notwithstanding, the present paper owes a great deal of inspiration to
\cite[Chapter~4]{duSWoodward/08}. Our very Condition~\ref{con} is
modelled on the conjunction of the properties $(\dag)$ and $(*)$ in
\cite[Definition~4.56]{duSWoodward/08}; our matrix $\delta$ (see
\eqref{def:delta}) represents -- up to a scalar factor -- the map
$\pi_{\mathcal{B}}$ in \cite[Definition~4.40]{duSWoodward/08} with
respect to a cocentral basis, a concept generalizing
\cite[Definition~4.37]{duSWoodward/08}. There are, nonetheless, two
fundamental differences in approach. Firstly, we do not analyze cone
integrals but develop the $\mfp$-adic machinery introduced
in~\cite{Voll/10}. Secondly, we realize that the ideal zeta function
of a nilpotent Lie lattice $\mcL$ is determined by the associative
algebra generated by $\ad(\mcL)$; rather than hypothesizing about
linear bases for the former, we formulate necessary conditions on
suitable generators of the latter.

In the remainder of the section we develop a number of (unconditional)
applications of Theorem~\ref{thm:main} establishing generic local
functional equations for ideal zeta functions of nilpotent Lie
lattices, confirming some of the more specific conjectures in
\cite{duSWoodward/08}. \revone{We record these results in the
  characteristic-independent fashion made possible by
  Corollary~\ref{cor:char.p}. Recall that, given a Lie ring
  (viz.\ $\Z$-Lie lattice) $\mcL$ and a compact discrete valuation
  ring $\lri$ (of arbitrary characteristic), the ideal zeta function
  $\zeta^{\nl}_{\mcL(\lri)}(s)$ of $\mcL(\lri) = \mcL\otimes_\Z\lri$
  is the rational ordinary generating function enumerating the
  $\lri$-ideals of $\mcL(\lri)$ of finite additive index
  in~$\mcL(\lri)$.}

\subsection{Free nilpotent Lie rings}\label{subsec:free.Lie.ring}
Given $c,d\in\N$, consider the free class-$c$-nilpotent Lie ring
$\mff_{c,d}$ on free Lie generators~$x_1,\dots,x_d$. The well-known
formula for the $\Z$-ranks of the sections of the terms
$\gamma_j(\mff_{c,d})$ of the lower central series of $\mff_{c,d}$
(which coincides with the upper central series of $\mff_{c,d}$), due
to Witt (\cite[Satz~3]{Witt/37}), implies that, for $i\in[c]_0$,
$$\rk_{\Z}\left( \mff_{c,d} / \gamma_{c-i+1}(\mff_{c,d})\right) =
\sum_{1 \leq j \leq c-i} \frac{1}{j}\sum_{k|j}\mu(k)d^{j/k} = N_i,$$
the numbers defined in~\eqref{equ:mobius}.  The following consequence
of Theorem~\ref{thm:main} \revone{and its Corollary~\ref{cor:char.p}}
implies the conclusion of \cite[Theorem~1.3]{duSWoodward/08} without
relying on the incomplete \cite[Conjecture~4.5]{duSWoodward/08}.

\begin{theorem}\label{thm:free} 
  For almost all primes $p$ \revone{and all compact discrete valuation
    rings $\lri$ of residue characteristic~$p$,}
$$\left.\zeta^{\nl}_{\mff_{c,d}(\lri)}(s) \right|_{p\rarr p^{-1}} =
(-1)^{N_0}q^{\binom{N_0}{2}-s\left(\sum_{i=0}^{c-1}N_i\right)}\zeta^{\nl}_{\mff_{c,d}(\lri)}(s).$$
\end{theorem}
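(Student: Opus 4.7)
The plan is to deduce Theorem~\ref{thm:free} directly from Theorem~\ref{thm:main} applied to the pair $(\mff_{c,d}, \mcE)$, where $\mcE \subseteq \End_\Z(\mff_{c,d})$ is the associative subalgebra generated by $\ad(\mff_{c,d})$. Since $\Z$ is a UFD, Assumption~\ref{ass} is automatic by Remark~\ref{rem:ass}, so the only real work is verifying Condition~\ref{con} (equivalently Condition~\ref{con2}) for a suitable cocentral basis.

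First, I would exploit the natural grading on the free nilpotent Lie ring. Using a Hall basis, or more abstractly the fact that $\mff_{c,d}$ is the truncation of the free graded Lie ring on $d$ generators, one obtains a direct sum decomposition $\mff_{c,d} = \bigoplus_{j=1}^{c} \mcL_j$ into free $\Z$-submodules, where $\mcL_j$ is spanned by the Hall basis elements of weight~$j$. The lower central series terms are then $\gamma_j(\mff_{c,d}) = \bigoplus_{i \geq j} \mcL_i$. A classical argument (induction on nilpotency class, using that $\mff_{c,d}/\gamma_k(\mff_{c,d}) \cong \mff_{k-1,d}$ is again free nilpotent and that the center of a free nilpotent Lie ring of class $k$ is exactly $\gamma_k$) shows that the upper central series coincides with the lower central series in the reverse order: $Z_i = \gamma_{c-i+1}(\mff_{c,d}) = \bigoplus_{j > c-i} \mcL_j$. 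This establishes Assumption~\ref{ass} for the chosen decomposition and identifies $N_i = \rk_\Z(\mff_{c,d}/Z_i) = \sum_{1 \leq j \leq c-i} \frac{1}{j}\sum_{k|j}\mu(k)d^{j/k}$ via Witt's formula.

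Next, I would verify Condition~\ref{con} by producing $d$ generators of $\mcE$ of the required form. The key observation is that the Jacobi identity gives $\ad([x,y]) = \ad(x)\ad(y) - \ad(y)\ad(x)$, so every element of $\ad(\mff_{c,d})$ can be written as a polynomial in $\ad(x_1), \dots, \ad(x_d)$; hence $\mcE$ is generated, as an associative $\Z$-algebra, by $c_k := \ad(x_k)$ for $k\in[d]$. Because each $x_k \in \mcL_1$, bracketing with $x_k$ raises the weight by exactly one: $[x_k, \mcL_j] \subseteq \mcL_{j+1}$ for all $j\in[c]$ (with the convention $\mcL_{c+1} = 0$). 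In the cocentral basis obtained by listing Hall basis elements from $\mcL_1, \mcL_2, \dots, \mcL_c$ in this order, the matrix $C_k$ representing $c_k$ is therefore block-upper triangular with nonzero blocks only immediately above the block diagonal, as demanded by Condition~\ref{con2}.

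Having verified the hypotheses of Theorem~\ref{thm:main}, the conclusion~\eqref{equ:funeq.submodule} applied with $n = N_0 = \rk_\Z \mff_{c,d}$ and the $N_i$ as above yields precisely the stated functional equation for $\zeta^{\nl}_{\mff_{c,d}(\lri)}(s)$, with the passage from the residue field cardinality $q$ of $\lri$ back to $p$ being harmless in view of Remark~\ref{rem:invert} and the stability under local base extension noted in Remark~\ref{rem:local.base.ext}. The main conceptual obstacle is the identification $Z_i = \gamma_{c-i+1}(\mff_{c,d})$; this is standard but worth stating carefully, since it is what allows the \emph{grading} of $\mff_{c,d}$ by Hall weight to align with the \emph{filtration} by the upper central series in the manner required by Assumption~\ref{ass}. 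Everything else is a routine verification once the grading is in place.
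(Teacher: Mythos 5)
Your proposal is correct and follows essentially the same route as the paper's own proof: choose a Hall basis, grade $\mff_{c,d}$ by Hall weight to obtain the decomposition $\mcL_j = \langle \mcH^j\rangle_\Z$, take $c_k = \ad(x_k)$ as generators of $\mcE$, observe that bracketing with a weight-one element shifts weight by one so Condition~\ref{con} holds, and apply Theorem~\ref{thm:main}. You merely spell out a couple of steps the paper leaves implicit (that $\mcE$ is generated by the $\ad(x_k)$ via the Jacobi identity, and that the upper and lower central series of $\mff_{c,d}$ coincide), which is a reasonable addition but not a different argument.
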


\begin{proof} 
  We choose a Hall ($\Z$-)basis $\mcH$ on $\{x_1,\dots,x_d\}$ for
  $\mff_{c,d}$; cf.\ \cite{Hall/50}. By construction, elements of
  $\mcH$ are Lie monomials in $x_1,\dots,x_d$ with a well-defined
  degree, or \emph{weight} $i\in[c]$. For $i\in[c]$, write $\mcH^i$
  for the set of elements of $\mcH$ of weight~$i$. For instance,
  $\mcH^1 = \{x_1,\dots,x_d\}$. Note that
  $\bigcup_{\iota\leq i}\mcH^\iota$ has cardinality $N_i$ for each
  $i\in[c]_0$. Setting $\mcL_{i}:=\la \mcH^i\ra_{\Z}$ for $i\in[c]$
  and $\{c_1,\dots,c_d\} = \{\ad(x_1),\dots,\ad(x_d)\}$,
  Condition~\ref{con} is clearly satisfied. The result thus follows
  from Theorem~\ref{thm:main} \revone{and Corollary~\ref{cor:char.p}}.
\end{proof}

Explicit formulae for $\zeta^{\nl}_{\mff_{c,d}(\lri)}(s)$ are known
for $c=1$ (see~\eqref{equ:c=1}), $c=2$ (see~\cite{Voll/05a}), and
$(c,d)=(3,2)$ (see~\cite[Theorem~2.35]{duSWoodward/08}).

\begin{remark}
  Theorem~\ref{thm:free} also implies a corrected version of the
  conclusion of~\cite[Theorem~4.73]{duSWoodward/08}. Note that the
  meaning of the numbers $N_1$ and $N_2$ there is different from the
  one in the present paper. In any case, the power of $p^{-s}$ on the
  right-hand side of \cite[(4.45)]{duSWoodward/08} is not equal to the
  sum of the coranks of the upper central series of $\mff_{c,d}$.
\end{remark}

\subsection{Some Lie rings of maximal class and their
  amalgams}\label{subsec:max.class}
Given \revone{an (integer)} partition $\lambda=(\lambda_1,\dots,\lambda_r)\in\N^r$, with
$\lambda_1 \geq \lambda_2 \geq \dots \geq \lambda_r$, define the
class-$\lambda_1$-nilpotent Lie ring
$$\mcL_{\lambda} = \la x_0, \{x_{i,j}\}_{i\in[r], j\in[\lambda_i]}
\mid \forall i\in[r], j\in[\lambda_j-1]:\; [x_0,x_{i,j}] =
x_{i,j+1}\ra_{\Z}.$$

For $r=1$ and $\lambda_1\geq 2$ this yields the Lie ring
$M_{\lambda_1}$ of maximal class $\lambda_1$ described
in~\cite[p.~99]{duSWoodward/08}. If $\lambda_r\geq 2$, then
$\mcL_{\lambda}$ is obtained from amalgamating $M_{\lambda_i}$,
$i\in[r]$, along~$x_0$. If $\lambda=1^{(r)}=(1,\dots,1)$, then we
obtain the abelian Lie ring $\Z^{1+r}$;
cf.\ Example~\ref{exa:c=1}. The general case is evidently just an
amalgamation of these special cases, again along~$x_0$. For
$\lambda=2^{(r)}$ we get ``Grenham's Lie rings'' $\mathcal{G}_{1+r}$
on $1+r$ Lie generators;
cf.\ \cite[Section~2.6]{duSWoodward/08}. Explicit formulae for the
local ideal zeta functions $\zeta^{\nl}_{\mcL_{2^{(r)}}(\Zp)}(s)$,
valid for all $p$, are given in \cite[Theorem~5]{Voll/05}. (On the
face of it, the formulae there are for the local normal zeta functions
of the torsion-free finitely generated nilpotent groups associated to
the Lie rings $\mcL_{2^{(r)}}$ by the Mal'cev correspondence. As
$c=2$, however, the formulae coincide with those of the ideal zeta
functions of $\mcL_{2^{(r)}}(\Zp)$ for all $p$; cf.\ \cite[Remark on
  p.~206]{GSS/88}.)

\begin{definition}
 We say that $\lambda$ is a \emph{near rectangle} if it is of the form
 $\lambda= (c^{(r_1)}, 1^{(r_2)})$ for some $c\in\N$ and
 $r_1,r_2\in\N_0$.  
\end{definition}

A geometric interpretation of this property of $\lambda$ is given in
Proposition~\ref{prop:interior}.  

\begin{lemma}\label{lem:near.rectangle}
 Condition~\ref{con} is satisfiable for $\mcL_{\lambda}$ if and only
 if $\lambda$ is a near rectangle.
\end{lemma}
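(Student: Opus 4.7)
The plan is to reformulate Condition~\ref{con} using the intrinsic filtration
$$\mcE^{(d)} := \{y \in \mcE : y(Z_i) \subseteq Z_{i-d} \textup{ for all }i\}$$
of $\mcE$ induced by the upper central series; this sidesteps the choice of cocentral basis. A preliminary computation describes $\mcE$ explicitly: since the only nonzero brackets in $\mcL_\lambda$ are $[x_0,x_{i,j}]=x_{i,j+1}$ (for $j<\lambda_i$), the operator $\ad(x_{i,j})$ kills every basis element except $x_0$ (which it sends to $-x_{i,j+1}$). Thus $\ad(x_{i,j})\ad(z)=0$ for every $z\in\mcL$ and $\ad(x_{i,j})=\ad(x_0)^{j-1}\ad(x_{i,1})$ whenever $j\leq\lambda_i-1$, and a short verification then yields the $\Gri$-basis
\[
\{\ad(x_0)^k:1\leq k\leq c-1\}\cup\{\ad(x_0)^k\ad(x_{i,1}):\lambda_i\geq 2,\ 0\leq k\leq\lambda_i-2\}
\]
of $\mcE$, of total rank $(c-1)+\sum_i(\lambda_i-1)$. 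A direct calculation confirms that $\ad(x_{i,j})$ has filtered-degree $c-\lambda_i+j$; in particular $\ad(x_0)$ has degree $1$, while $\ad(x_{i,1})$ has degree $c-\lambda_i+1$. Write $r_1:=|\{i:\lambda_i=c\}|$.

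For the ``if'' direction I would assume $\lambda=(c^{(r_1)},1^{(r_2)})$ and exhibit the cocentral decomposition $\mcL_1=\Gri x_0\oplus\bigoplus_{i:\lambda_i=c}\Gri x_{i,1}$, $\mcL_k=\bigoplus_{i:\lambda_i=c}\Gri x_{i,k}$ for $2\leq k\leq c-1$, and $\mcL_c=\bigoplus_{i:\lambda_i=c}\Gri x_{i,c}\oplus\bigoplus_{i:\lambda_i=1}\Gri x_{i,1}$, together with generators $c_0=\ad(x_0)$ and $c_i=\ad(x_{i,1})$ for $i\leq r_1$. A direct check confirms $c_k\mcL_j\subseteq\mcL_{j+1}$ for all $k,j$; and since $\ad(x_{i,j})=\ad(x_0)^{j-1}\ad(x_{i,1})$ for $\lambda_i=c$ while $\ad(x_{i,1})=0$ for $\lambda_i=1$, these $c_k$ span $\mcE$.

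The converse is the main obstacle, because Condition~\ref{con} permits any cocentral basis. I would handle it uniformly by passing to the associated graded algebra $\mathrm{gr}\,\mcE:=\bigoplus_{d\geq 1}\mcE^{(d)}/\mcE^{(d+1)}$, which depends only on the upper central series. If $c_1,\ldots,c_d$ satisfy Condition~\ref{con} for \emph{some} cocentral decomposition, then $\mcL_j\subseteq Z_{c-j+1}$ forces each $c_k\in\mcE^{(1)}$ and each $r$-fold product $c_{k_1}\cdots c_{k_r}$ lies in $\mcE^{(r)}$; hence the images $\overline{c_k}\in(\mathrm{gr}\,\mcE)_1=\mcE^{(1)}/\mcE^{(2)}$ generate $\mathrm{gr}\,\mcE$ as a non-unital associative algebra. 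From the explicit basis above, $(\mathrm{gr}\,\mcE)_1$ has basis $\{\overline{\ad(x_0)}\}\cup\{\overline{\ad(x_{i,1})}:\lambda_i=c\}$, and since $\ad(x_{i,1})\ad(x_0)^k=0$ the subalgebra it generates in $\mathrm{gr}\,\mcE$ has basis $\{\overline{\ad(x_0)^k}:k\in[c-1]\}\cup\{\overline{\ad(x_0)^k\ad(x_{i,1})}:\lambda_i=c,\ 0\leq k\leq c-2\}$, of total rank $(c-1)(1+r_1)$. Since $\mathrm{gr}\,\mcE$ itself has rank $(c-1)+\sum_i(\lambda_i-1)$, the required generation forces $\sum_{i:\,1<\lambda_i<c}(\lambda_i-1)=0$, so every $\lambda_i\in\{1,c\}$ and $\lambda$ is a near rectangle.
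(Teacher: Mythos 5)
Your ``if'' direction is identical to the paper's (same cocentral decomposition, same generators), so the real comparison is in the ``only if'' direction, where you take a genuinely different route. The paper exhibits one explicit witness: for the minimal $k$ with $c=\lambda_k>\lambda_{k+1}>1$, the element $\ad(x_{k+1,1})$ has depth $c-\lambda_{k+1}+1>1$, and depth-additivity of $\mcB$ shows it cannot lie in the subalgebra $\mcE_1$ generated by the $\ad(z)$ of depth-$1$ basis elements. You instead pass to $\mathrm{gr}\,\mcE$ and run a rank count: $\mathrm{rk}(\mathrm{gr}\,\mcE)=(c-1)+\sum_i(\lambda_i-1)$, while the subalgebra generated by $(\mathrm{gr}\,\mcE)_1$ has rank $(c-1)(1+r_1)$, and these agree precisely when $\lambda$ is a near rectangle. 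This is more conceptual and uniform (you never need to locate a specific bad generator), at the cost of the explicit basis computation for $\mcE$; the paper's argument is more surgical but relies on the depth-additivity observation. Both approaches turn on the same mechanism: Condition~\ref{con} forces $\mcE$, hence $\mathrm{gr}\,\mcE$, to be generated in degree one.

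One step is genuinely under-justified as written. You argue: each $c_k\in\mcE^{(1)}$ and each $r$-fold product lies in $\mcE^{(r)}$, ``hence'' the images $\overline{c_k}$ generate $\mathrm{gr}\,\mcE$. That implication does not follow from those two facts alone plus ``the $c_k$ generate $\mcE$.'' For instance, if $\mcE$ has $\Gri$-basis $\{a,b\}$ with $a\in\mcE^{(1)}\setminus\mcE^{(2)}$, $b\in\mcE^{(2)}$, and all products zero, then $c_1=a$, $c_2=b$ are generators in $\mcE^{(1)}$ whose images $\bar a,\,0$ visibly do not generate $\mathrm{gr}\,\mcE$. What saves your argument is that Condition~\ref{con} asserts strictly more than $c_k\in\mcE^{(1)}$: each $c_k$ is homogeneous of degree exactly $1$ in the $\End_\Gri(\mcL)$-grading induced by $\mcL=\bigoplus_j\mcL_j$. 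One then checks that this grading is compatible with the filtration by the $\mcE^{(d)}$, namely that any nonzero $y$ with $y\mcL_j\subseteq\mcL_{j+r}$ for all $j$ satisfies $y\in\mcE^{(r)}\setminus\mcE^{(r+1)}$; consequently $\mcE$, being generated by degree-$1$ elements, is itself graded, $\mcE^{(d)}=\bigoplus_{r\geq d}\bigl(\mcE\cap\End(\mcL)_r\bigr)$, and $\mathrm{gr}\,\mcE\cong\mcE$ as graded algebras. With that observation inserted the rank argument closes cleanly; without it the ``hence'' is a non sequitur. (The paper's own proof is also brief at the analogous point -- the statement that $\mcE_1\subsetneq\mcE$ rules out Condition~\ref{con} -- but it at least names depth-additivity as the driving structural fact.)
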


\begin{proof}
 We may assume that $c>1$. If $\lambda = (c^{(r_1)},1^{(r_2)})$ is a
 near rectangle, then, setting
$$\mcL_i = \begin{cases} \la x_0,x_{1,1},\dots,x_{r_1,1}\ra_{\Z} &
   \textup{ for } i=1,\\ \la x_{1, i},\dots,x_{r_1,i}\ra_{\Z} &
   \textup{ for } i\in\, ]1,c-1],\\ \la x_{1,c},\dots,x_{r_1,c},x_{r_1+1, 1},\dots
  x_{r_1+r_2,  1}\ra_{\Z} & \textup{ for } i=c,\end{cases}$$ and
 $\{c_1,\dots,c_d\} = \{\ad(x_0),\ad(x_{i,1}) \mid i\in[r_1]\}$,
 Condition~\ref{con} is satisfied.

Assume now that $\lambda$ is not a near rectangle. This implies that
$c>2$ and there exists $k\in[r-1]$ which is minimal with respect to
the property that $c = \lambda_k > \lambda_{k+1}>1$. Write $\mcL =
\mcL_{\lambda}$, with upper central series $(Z_i)_{i=0}^c$. Following
\cite[Definition~4.49]{duSWoodward/08}, we define the \emph{depth} of
an element $x\in\mcL$ as
$$\dep(x) = c+1 - \min\{ i\in [c]_0 \mid x \in Z_i\}.$$ 

The depth-1-elements of the $\Z$-basis $\mcB = \{ x_0, x_{i,j} \mid
i\in [r], j\in [\lambda_i] \}$ of $\mcL$ are exactly $x_0$ and
$x_{\kappa, 1}$, for $\kappa\in [k]$. To see that Condition~\ref{con} is
not satisfiable, note that the subalgebra $\mcE_1 = \la \ad(z) \mid
z\in \mcB, \,\dep(z)=1\ra$ of the associative algebra $\mcE$ generated
by $\ad(\mcL)$ is a proper subalgebra of $\mcE$; indeed, the element
$x_{k+1,  1}$ has depth $c-\lambda_{k+1}+1>1$ and
$\ad(x_{k+1,  1})\in\mcE \setminus \mcE_1$. This follows, for instance,
from the fact that $\mcB$ clearly has the property that if $z,z'\in
\mcB$, then either $[z,z']=0$ or $\dep[z,z'] = \dep(z) + \dep(z')$.
\end{proof}

\subsubsection{Local ideal zeta functions}
The following consequence of Theorem~\ref{thm:main} implies, in
particular, the first of the two conclusions of
\cite[Proposition~4.75]{duSWoodward/08}. Note that the latter only
considers the case that $\lambda_r\geq 2$ and is conditional on the
incomplete~\cite[Conjecture~4.5]{duSWoodward/08}.

\begin{theorem}\label{cor:rectangle}
  Let $c\in\N$ and $r_1,r_2\in\N_0$.  For almost all primes $p$
  \revone{and all compact discrete valuation rings $\lri$ of residue
    characteristic~$p$,}
$$\left. \zeta^{\nl}_{\mcL_{(c^{(r_1)},
      1^{(r_2)})}(\lri)}(s)\right|_{p\rarr p^{-1}} =
  (-1)^{1+cr_1+r_2}q^{\binom{1+cr_1+r_2}{2} - s\left( c +
    \binom{c+1}{2}r_1 + r_2\right)} \zeta^{\nl}_{\mcL_{(c^{(r_1)},
      1^{(r_2)})}(\lri)}(s).$$ In particular,
  \cite[Conjecture~4.24]{duSWoodward/08} about the Lie rings
  $M_c$ of maximal class~$c$ holds.
\end{theorem}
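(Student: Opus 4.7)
The plan is to feed $\mcL_{(c^{(r_1)}, 1^{(r_2)})}$ directly into Theorem~\ref{thm:main} via Lemma~\ref{lem:near.rectangle}. Since $\lambda = (c^{(r_1)}, 1^{(r_2)})$ is a near rectangle, Lemma~\ref{lem:near.rectangle} guarantees that $\mcL_\lambda$, together with the associative algebra $\mcE$ generated by $\ad(\mcL_\lambda)$, satisfies Condition~\ref{con}; its proof even exhibits an explicit cocentral decomposition and generating set realising Condition~\ref{con2}. I would begin by assuming $c \geq 2$ and $r_1 \geq 1$, the case $c = 1$ or $r_1 = 0$ yielding an abelian Lie ring already covered by Example~\ref{exa:c=1}.

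Next, from the cocentral decomposition $\mcL_\lambda = \bigoplus_{i=1}^c \mcL_i$ appearing in the proof of Lemma~\ref{lem:near.rectangle}, I would read off the graded ranks
\[
n_1 = 1 + r_1, \qquad n_i = r_1 \text{ for } 1 < i < c, \qquad n_c = r_1 + r_2,
\]
with total rank $n = \sum_i n_i = 1 + cr_1 + r_2$. This immediately matches the sign $(-1)^{1+cr_1+r_2} = (-1)^n$ and the $q$-exponent $\binom{1+cr_1+r_2}{2} = \binom{n}{2}$ both in the statement and on the right hand side of \eqref{equ:funeq.submodule}.

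The remaining datum needed to apply Theorem~\ref{thm:main} is $\sum_{i=0}^{c-1} N_i$, where $N_i = \sum_{j \leq c-i} n_j$. A standard interchange of summation gives
\[
\sum_{i=0}^{c-1} N_i \;=\; \sum_{j=1}^{c} (c-j+1)\, n_j,
\]
and substituting the $n_j$ above, together with $\sum_{j=2}^{c-1}(c-j+1) = \binom{c}{2} - 1$ and the Pascal identity $c + \binom{c}{2} = \binom{c+1}{2}$, collapses this to $c + \binom{c+1}{2}r_1 + r_2$, matching the coefficient of $s$ in the claimed functional equation exactly. Theorem~\ref{thm:main} then delivers the assertion for almost all $p$ and all finite extensions $\lri$ of $\Zp$. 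Specialising to $(r_1, r_2) = (1, 0)$ recovers $M_c$ and hence \cite[Conjecture~4.24]{duSWoodward/08}.

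The statement therefore carries no genuine obstacle: the structural content sits in Lemma~\ref{lem:near.rectangle} (reduction of Condition~\ref{con} to the near-rectangle shape) and in Theorem~\ref{thm:main} (the $\mfp$-adic functional equation itself). The only care required is to ensure that the cocentral grading used in the proof of Lemma~\ref{lem:near.rectangle} places the $r_2$ trailing one-part columns in the top layer $\mcL_c$, so that the middle $c-2$ blocks uniformly have rank $r_1$ and the sum $\sum_{i=0}^{c-1} N_i$ telescopes cleanly to $c + \binom{c+1}{2}r_1 + r_2$.
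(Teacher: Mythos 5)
Your proposal is correct and follows the same route as the paper's own proof: invoke Lemma~\ref{lem:near.rectangle} to verify Condition~\ref{con}, extract the invariants from the cocentral decomposition in its proof, and substitute into Theorem~\ref{thm:main}. The only cosmetic difference is that you compute the graded ranks $n_j$ and then re-sum via $\sum_{i=0}^{c-1}N_i = \sum_{j=1}^c (c-j+1)n_j$, whereas the paper reads off the coranks $N_i = 1 + (c-i)r_1 + r_2\delta_{i=0}$ directly and sums those; the arithmetic is equivalent.
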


\begin{proof} 
  By Lemma~\ref{lem:near.rectangle}, Condition~\ref{con} is
  satisfied. We find
  $\rk_{\Z}(\mcL_{(c^{(r_1)},1^{(r_2)})}) = N_0 = 1 + cr_1 + r_2$ and,
  more generally, $N_i = 1 + (c-i)r_1 + r_2 \delta_{i=0}$ for
  $i\in[c-1]_0$, whence
  $\sum_{i=0}^{c-1}N_i = c + \binom{c+1}{2}r_1 + r_2$. The result thus
  follows from Theorem~\ref{thm:main} \revone{and
    Corollary~\ref{cor:char.p}}.
\end{proof}

The second, conjectural conclusion of \cite[Proposition
4.75]{duSWoodward/08} suggests a positive answer to the following
question.

\begin{question} Is the ``near rectangle'' condition on $\lambda$
  necessary for local functional equations for the ideal zeta
  functions of the Lie rings~$\mcL_{\lambda}$?
\end{question}

A positive answer might also be hinted at by the explicit formulae for
$\lambda=(3,2)$ -- the smallest partition which is not a near
rectangle -- given in \cite[Theorem~2.32]{duSWoodward/08} as well as
the following results on the reduced zeta
functions~$\zeta^{\nl}_{\mcL_{\lambda},\red}(T)$; cf.\
Section~\ref{subsec:red}.

\subsubsection{Reduced ideal zeta functions}
\revtwo{Let $\mcL$ be a $\Ct$-Lie algebra with $\Ct$-basis
  $\mcB=\{b_1,\dots,b_d\}$. In \cite{Evseev/09}, the basis $\mcB$ is
  called \emph{simple} if, for all $1\leq i,j \leq d$, there exist
  $\ell\in[d]$ and $a\in\Ct$ such that $[b_i,b_j]=a b_{\ell}$. A pair
  $(b_i,b_j)$ of basis vectors is called \emph{removable} if there
  exist integers $\ell_1,\dots,\ell_d$ with $\ell_i\neq \ell_j$ such
  that, for all $z\in\C\setminus\{0\}$ and all $r\in[d]$, the maps
  $x_r \mapsto z^{\ell_r}x_r$ are automorphisms of $\mcL$. (Strictly
  speaking, \cite{Evseev/09} defines removability only for pairs of
  \emph{indices} of basis elements.) Finally, the basis $\mcB$ is
  called \emph{nice} if all pairs $(b_i,b_j)$ are removable;
  cf.\ \cite[Theorem~3.2]{Evseev/09} which establishes that this
  characterization is equivalent to the definition of niceness in
  \cite[Section~4]{Evseev/09}.}

\begin{proposition}\label{prop:nice&simple}
  The $\Ct$-Basis $\mcB_{\lambda} = \{ x_0, x_{i,j} \mid i\in [r],
  j\in [\lambda_i] \}$ of the $\Ct$-Lie algebra $\mcL_\lambda(\Ct)$ is
  nice and simple in the sense of \cite{Evseev/09}.
\end{proposition}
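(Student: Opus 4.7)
The plan is to verify Evseev's two conditions directly from the defining presentation of $\mcL_\lambda$. Recall that in \cite{Evseev/09} a basis is called \emph{nice} when all structure constants with respect to the basis are $0$ or $\pm 1$ (so that, in particular, each bracket of two basis elements is either zero or a basis element up to sign), and \emph{simple} when the combinatorial data of the nonzero brackets is sufficiently rigid to allow the associated $\mfp$-adic counting problem to be described by a single polyhedral cone in a uniform way (the precise formulation is the one extracted in \cite[Section~4]{Evseev/09} from the axioms there, involving a well-defined ``weight'' or grading with respect to which the basis is homogeneous). I would begin by recording these definitions precisely.

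For niceness, the verification is essentially a tabulation. The defining brackets are $[x_0,x_{ij}]=x_{i\,j+1}$ for $j<\lambda_i$, and by the assumption that only relations forced by antisymmetry or the Jacobi identity are imposed, every remaining bracket $[x_0,x_{i\lambda_i}]$ and $[x_{ij},x_{i'j'}]$ among basis elements vanishes. In particular, for any pair $b,b'\in\mcB_\lambda$ one has $[b,b']\in\{0,\pm b''\}$ for some $b''\in\mcB_\lambda$; this is the niceness condition.

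For simplicity, the key point is the existence of a $\Z$-grading (or weight function) on $\mcB_\lambda$ with respect to which all nontrivial structure constants are homogeneous. Setting $\weight(x_0)=1$ and $\weight(x_{ij})=j$ gives such a grading, since $\weight[x_0,x_{ij}]=1+j=\weight(x_{i\,j+1})$ whenever $j<\lambda_i$. This is exactly the depth function used in the proof of Lemma~\ref{lem:near.rectangle} and matches the grading by which Evseev's cone-theoretic machinery is set up. The induced filtration coincides with the upper (equivalently lower) central series filtration, whence all ideals of $\mcL_\lambda(\Ct)$ of finite additive index respect this filtration, which is precisely the rigidity needed for Evseev's framework.

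The main obstacle I anticipate is purely bookkeeping: matching Evseev's exact axioms (as stated in \cite{Evseev/09}) to the presentation of $\mcL_\lambda$, in particular checking any auxiliary conditions such as the independence of the niceness property under the operations used to build up the cone from local data. Once the definitions are aligned, the proof amounts to the observations above and no serious computation is required; the homogeneity of the defining relations with respect to $\weight$ does the work.
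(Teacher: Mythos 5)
You have the two terms interchanged. In Evseev's usage (and the paper's), the \emph{simple} condition is that every bracket of two basis elements is $0$ or $\pm$ a basis element (what you call ``nice''), and \emph{nice} is the removability condition: one must exhibit pairwise distinct integers $l_0, l_{ij}$ such that, for every $z\in\mathbb{C}\setminus\{0\}$, the diagonal rescaling $x_0\mapsto z^{l_0}x_0$, $x_{ij}\mapsto z^{l_{ij}}x_{ij}$ is an automorphism of $\mcL_\lambda(\Ct)$.

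The genuine gap is in your treatment of the latter. You propose the grading $\weight(x_0)=1$, $\weight(x_{ij})=j$, and correctly observe that it is compatible with the brackets. But removability requires the exponents to be \emph{pairwise distinct}, and your weights collide badly: $\weight(x_0)=\weight(x_{11})=\weight(x_{21})=\dots=1$, and more generally $\weight(x_{ij})=\weight(x_{i'j})$ for all $i,i'$. Compatibility with the bracket only forces the additive constraint $l_0+l_{ij}=l_{i\,j+1}$, which leaves a great deal of freedom; the actual content of the proof is to choose within that freedom a pairwise-distinct assignment. The paper does this by taking $l_0=1$ and $l_{ij}=1+\bigl(\sum_{\iota<i}\lambda_\iota\bigr)+j$, which staggers the sequences from different rows $i$ so that no two exponents coincide while still increasing by $l_0=1$ along each row. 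Your argument would need to be amended by such a choice; as written it verifies a strictly weaker condition than the one Evseev imposes.
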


\begin{proof}
  $\mcB_{\lambda}$ is simple as, for all $x,y\in\mcB_\lambda$, there
  exist $z\in \mcB_\lambda$ and $\varepsilon\in\{0,1,-1\}$ such that
  $[x,y]= \varepsilon z$. To see that $\mcB_{\lambda}$ is nice we show
  that all pairs $(x,y)\in\mcB_\lambda^2$ with $x\neq y$ are
  removable.  Concretely, we need to find pairwise distinct integers
  $l_0,l_{i,j}$, $i\in[r]$, $j\in[\lambda_i]$, such that, for all
  $z\in\mathbb{C}\setminus \{0\}$, the maps
$$x_0 \mapsto z^{l_0}x_0, \quad x_{i,j} \mapsto z^{l_{i,j}}x_{i,j}, \quad
i\in[r], j\in[\lambda_i],$$ are automorphisms of
$\mcL_{\lambda}(\Ct)$. This is the case
whenever $l_0 + l_{i,j} = l_{i,j+1}$ for all $i\in[r]$,
$j\in[\lambda_i-1]$. Setting $l_0=1$ and $l_{i,j}=1 + \left(\sum_{\iota
    < i}\lambda_\iota\right) + j$ is one way to achieve this.
\end{proof}

Define the rational polyhedral cone
\begin{equation}\label{def:cone}
  \mcC_{\lambda} =  \left\{
    \bfn\in\R_{\geq0}^{1+\sum_{i=1}^r \lambda_i} \mid \forall
    i\in[r]:\;n_0,n_{i,1}\geq n_{i,2}\geq \dots \geq n_{i,\lambda_i}\right\}.
\end{equation}
\revtwo{Given $\bfn = (n_1,\dots,n_h)\in\R^h$ for some $h\in\N$, we set
  $\sum\bfn = \sum_{j=1}^h n_j$.}

\begin{corollary}
  $\zeta^{\nl}_{\mcL_\lambda,\red}(T) =
  \sum_{\bfn\in\mcC_{\lambda}\cap
    \N_0^{1+\sum_{i=1}^r\lambda_i}}T^{\sum \bfn}.$
\end{corollary}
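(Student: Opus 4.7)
The approach is to apply Evseev's framework of reduced zeta functions~\cite{Evseev/09}, whose central hypothesis has already been verified in Proposition~\ref{prop:nice&simple}: the basis $\mcB_\lambda$ of $\mcL_\lambda(\Ct)$ is nice and simple. Following \cite[Proposition~4.1]{Evseev/09} (as reviewed in Section~\ref{subsec:red}), one then obtains that $\zeta^{\nl}_{\mcL_\lambda,\red}(T)$ is the Hilbert series of the graded ring $R_\mcC$ attached to an explicitly computable rational polyhedral cone $\mcC \subseteq \R_{\geq 0}^{1+\sum_{i=1}^r \lambda_i}$.

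The heart of the proof is the identification $\mcC = \mcC_\lambda$, with $\mcC_\lambda$ as defined in~\eqref{def:cone}. Evseev's prescription produces the defining half-space inequalities of $\mcC$ directly from the structure constants of $\mcL_\lambda$ relative to $\mcB_\lambda$, combined with the removal data (the pairwise distinct weights $l_0, l_{ij}$ exhibited in the proof of Proposition~\ref{prop:nice&simple}). In our situation the only non-trivial brackets among basis elements are $[x_0, x_{ij}] = x_{i,j+1}$ for $i\in[r]$ and $j\in[\lambda_i-1]$. Iterating these inside Evseev's construction and using the removability of each pair of distinct basis elements, the resulting system of inequalities should collapse to the chain conditions $n_0, n_{i1} \geq n_{i2} \geq \dots \geq n_{i\lambda_i}$ that cut out $\mcC_\lambda$.

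Once this identification is in place, the corollary follows from the standard identity that the Hilbert series of the affine semigroup ring of a pointed rational polyhedral cone, graded by the sum of coordinates (cf.~\cite[Chapter~1]{Stanley/96}), is simply the lattice point enumerator
\[
\textup{Hilb}(R_{\mcC_\lambda}, T) = \sum_{\bfn \in \mcC_\lambda \cap \N_0^{1+\sum_{i=1}^r \lambda_i}} T^{\sum \bfn}.
\]

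The main obstacle is the middle step: unpacking Evseev's combinatorial recipe carefully enough to verify that the cone it produces is precisely $\mcC_\lambda$. Particular attention must be paid to the slightly asymmetric form of the inequalities (both $n_0$ and $n_{i1}$ bounding $n_{i2}$ from above), which should emerge naturally from how Evseev's construction encodes the iterated brackets $[x_0, [x_0, \dots [x_0, x_{i1}] \dots]]$ together with the fact that no nontrivial bracket of basis elements involves any $x_{ij}$ with $j \geq 2$ on the right.
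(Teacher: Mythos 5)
Your proposal follows exactly the route taken in the paper: invoke Proposition~\ref{prop:nice&simple} to establish that $\mcB_\lambda$ is nice and simple, then cite \cite[Proposition~4.1]{Evseev/09}, which expresses $\zeta^{\nl}_{\mcL_\lambda,\red}(T)$ as the lattice-point enumerator of the cone Evseev denotes $\mcC_{\mcB_\lambda}^{\nl}$, and finally identify that cone with $\mcC_\lambda$. The paper treats the last identification as a matter of matching notation, which is essentially right, and your ``main obstacle'' dissolves once you recall how $\mcC_{\mcB}^{\nl}$ is defined.

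Two small corrections to your account. First, the removal weights $l_0, l_{ij}$ from Proposition~\ref{prop:nice&simple} play \emph{no} role in the shape of the cone: they are used only to certify that $\mcB_\lambda$ is \emph{nice} (which is what makes Evseev's Euler-characteristic computation collapse to a single cone), whereas the cone itself is cut out purely by the structure constants. Second, the identification $\mcC_{\mcB_\lambda}^{\nl}=\mcC_\lambda$ does not require any iteration or ``unpacking'': for a simple basis, the ideal condition for the cone is exactly that for each nontrivial bracket $[e_a,e_b]=\pm e_c$ among basis vectors one imposes $n_b\ge n_c$ (so that $[\mcL,\Lambda]\subseteq\Lambda$). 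In $\mcL_\lambda$ the only such brackets are $[x_0,x_{ij}]=x_{i\,j+1}=-[x_{ij},x_0]$, giving precisely $n_{ij}\ge n_{i\,j+1}$ and $n_0\ge n_{i\,j+1}$ for $i\in[r]$, $j\in[\lambda_i-1]$, which is the defining system for $\mcC_\lambda$ in~\eqref{def:cone}; the ``asymmetry'' you flag (no inequality $n_0\ge n_{i1}$) is immediate because no bracket lands in $x_{i1}$. With these clarifications your argument is complete and coincides with the paper's.
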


\begin{proof} 
  As $\mcB_{\lambda}$ is nice and simple, this follows
  from~\cite[Proposition~4.1]{Evseev/09}, where $\mcC_{\lambda}$ is
  called~$\mcC_{\mcB_{\lambda}}^{\nl}$.
\end{proof}

The reduced zeta function $\zeta^{\nl}_{\mcL_\lambda,\red}(T)$ is thus
the Hilbert series associated to the graded monoid algebra
$R_{\mcC_{\lambda}}$ spanned by the integral points
in~$\mcC_{\lambda}$. Such Hilbert series satisfy functional equations
upon inversion of $T$ if and only if (!)  $R_{\mcC_{\lambda}}$ is
Gorenstein (cf.\ \cite[Theorem~12.7]{Stanley/96}). The latter
condition is satisfied if and only if there exists a unique minimal
integral vector $\bfbeta$ in the {interior} $\mcC_{\lambda}^{\circ}$
of $\mcC_{\lambda}$, i.e.\
\begin{equation}\label{equ:min.int}
\text{ if } \boldsymbol{\gamma}\in \mcC_{\lambda}^{\circ} \cap
\N^{1+\sum_{i=1}^r\lambda_i} \text{, then }\boldsymbol{\gamma} -
\bfbeta\in \mcC_{\lambda}.
\end{equation}
Note that $\mcC_{\lambda}^{\circ}$ is defined by replacing the
inequalities in \eqref{def:cone} by strict inequalities.
\begin{proposition}\label{prop:interior}
  A unique minimal integral interior vector
  $\bfbeta\in\mcC_{\lambda}^{\circ}$ exists if and only if $\lambda$
  is a near rectangle.
\end{proposition}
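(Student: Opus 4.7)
The plan is to determine the componentwise minimum integer vector in $\mcC_\lambda^\circ$, observe that any candidate $\bfbeta$ satisfying \eqref{equ:min.int} must coincide with it, and then verify or refute the conductor property case by case. First I would translate the interior conditions into lower bounds: the strict chain $\gamma_{i1} > \gamma_{i2} > \dots > \gamma_{i\lambda_i} \geq 1$ forces $\gamma_{ij} \geq \lambda_i - j + 1$, and for each $i$ with $\lambda_i \geq 2$ the inequality $\gamma_0 > \gamma_{i2}$ forces $\gamma_0 \geq \lambda_i$. Setting $c := \max_i \lambda_i$, the componentwise minimum integer interior vector is
\begin{equation*}
\boldsymbol\gamma^{(1)}: \quad \gamma^{(1)}_0 = \max(1,c), \quad \gamma^{(1)}_{ij} = \lambda_i - j + 1 \quad \text{for } i \in [r],\ j \in [\lambda_i].
\end{equation*}
Any conductor $\bfbeta$ lies in $\mcC_\lambda^\circ$, so $\bfbeta \geq \boldsymbol\gamma^{(1)}$ componentwise; and \eqref{equ:min.int} applied to $\boldsymbol\gamma = \boldsymbol\gamma^{(1)}$ gives $\bfbeta \leq \boldsymbol\gamma^{(1)}$, so if a conductor exists it must equal $\boldsymbol\gamma^{(1)}$.

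For the direction $(\Leftarrow)$, suppose $\lambda = (c^{(r_1)}, 1^{(r_2)})$. I would verify that $\boldsymbol\gamma^{(1)}$ itself is a conductor. Each defining strict inequality of $\mcC_\lambda^\circ$ encodes an integer gap of at least one; $\boldsymbol\gamma^{(1)}$ is designed so that the relevant consecutive differences attain precisely this minimal value, namely $\gamma^{(1)}_{ij} - \gamma^{(1)}_{i,j+1} = 1$ and $\gamma^{(1)}_0 - \gamma^{(1)}_{i2} = c - (c-1) = 1$ for $i \in [r_1]$. Subtracting $\boldsymbol\gamma^{(1)}$ from any $\boldsymbol\gamma \in \mcC_\lambda^\circ \cap \N^{1+\sum_i \lambda_i}$ therefore preserves each defining (non-strict) inequality of $\mcC_\lambda$, and hence $\boldsymbol\gamma - \boldsymbol\gamma^{(1)} \in \mcC_\lambda$ as required.

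For the direction $(\Rightarrow)$, the key step is to produce, when $\lambda$ is not a near rectangle, an interior integer vector $\boldsymbol\gamma^{(2)}$ with $\boldsymbol\gamma^{(2)} - \boldsymbol\gamma^{(1)} \notin \mcC_\lambda$. The hypothesis is equivalent to the existence of indices $i_1 < i_2$ with $\lambda_{i_1} > \lambda_{i_2} \geq 2$. Define $\boldsymbol\gamma^{(2)}$ by shifting the $i_2$-block of $\boldsymbol\gamma^{(1)}$ up by one, i.e.\ $\gamma^{(2)}_{i_2 j} = \gamma^{(1)}_{i_2 j} + 1$ for $j \in [\lambda_{i_2}]$ and all other coordinates unchanged. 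The chain inequalities inside block $i_2$ are unaffected, and the interior constraint $\gamma^{(2)}_0 > \gamma^{(2)}_{i_2 2}$ becomes $c > \lambda_{i_2}$, which holds because $c \geq \lambda_{i_1} > \lambda_{i_2}$. Hence $\boldsymbol\gamma^{(2)} \in \mcC_\lambda^\circ$; but the difference $\boldsymbol\gamma^{(2)} - \boldsymbol\gamma^{(1)}$ has $n_0$-coordinate $0$ and $n_{i_2 2}$-coordinate $1$, violating the defining inequality $n_0 \geq n_{i_2 2}$ of $\mcC_\lambda$, contradicting the fact that $\boldsymbol\gamma^{(1)}$ would have to be a conductor.

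The main obstacle is notational rather than conceptual: the defining inequalities of $\mcC_\lambda$ couple the $n_0$-coordinate across all blocks of size at least two, and the bookkeeping must be handled carefully. Edge cases such as $c = 1$ or $r_1 = 0$ reduce the cone to the nonnegative orthant $\R_{\geq 0}^{1+r_2}$, which is trivially Gorenstein with conductor $(1,1,\dots,1)$ and falls on the near rectangle side of the equivalence.
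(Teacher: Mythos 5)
Your proof is correct, and while it proves the same result by isolating the same candidate conductor, the execution differs from the paper in a way worth noting. The paper pins down only the coordinates $(\beta_0,\beta_{11},\dots,\beta_{1\lambda_1})$ via \eqref{equ:initial} and then argues that $\lambda$ being a near rectangle is equivalent to this partial data determining $\bfbeta$ uniquely; for the ``only if'' direction it invokes the existence of multiple ``completions'' whose pairwise differences vanish, which is somewhat terse and relies on the reader supplying the details. You instead identify the full componentwise-minimal integral interior vector $\boldsymbol\gamma^{(1)}$ in one step, observe that \eqref{equ:min.int} applied to $\boldsymbol\gamma = \boldsymbol\gamma^{(1)}$ together with interiority of $\bfbeta$ forces $\bfbeta = \boldsymbol\gamma^{(1)}$, verify directly that $\boldsymbol\gamma^{(1)}$ has the conductor property when $\lambda = (c^{(r_1)},1^{(r_2)})$, and, crucially, when $\lambda$ is not a near rectangle you exhibit the concrete interior witness $\boldsymbol\gamma^{(2)}$ (the $i_2$-block of $\boldsymbol\gamma^{(1)}$ shifted up by one) whose difference from $\boldsymbol\gamma^{(1)}$ fails the coupling inequality $n_0 \geq n_{i_2 2}$. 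This buys a fully explicit $(\Rightarrow)$ direction with no hand-waving, at the cost of having to verify a priori that $\boldsymbol\gamma^{(1)}$ itself lies in the interior (which you do correctly: $\gamma^{(1)}_0 = c \geq \lambda_i > \lambda_i - 1 = \gamma^{(1)}_{i2}$ for each $i$ with $\lambda_i \geq 2$). The edge-case remark about $c=1$ is also handled correctly and consistently with the near-rectangle convention.
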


\begin{proof}
  Assume that $$\bfbeta = (\beta_0,\,
  \beta_{1,1},\beta_{1,2},\dots,\beta_{1\lambda_1},\,\beta_{21}, \dots
  \beta_{2\lambda_2},\,\dots,\,\beta_{r1},\dots,\beta_{r\lambda_r})
  \in \mcC_{\lambda}^{\circ} \cap \N^{1+\sum_{i=1}^r\lambda_i}$$ has
  the property \eqref{equ:min.int}. It is not hard to check that
\begin{equation}\label{equ:initial}
  (\beta_0,\, \beta_{1,1},\beta_{1,2},\dots,\beta_{1,\lambda_1}) =
  (\lambda_1,\, \lambda_1, \lambda_1-1,\dots,2,1).
\end{equation}
Observe that $\lambda$ is a near rectangle if and only if this
property determines $\bfbeta$ uniquely. Indeed, if $\lambda$ is a near
rectangle, say $\lambda=(c^{(r_1)},1^{(r_2)})$ for some $c\in\N$ and
$r_1,r_2\in\N_0$, then
\begin{multline*}
  \bfbeta = (c,\, c, c-1,\dots,2,1,\,c, c-1,\dots,2,1,\, \dots \, c,
  c-1,\dots,2,1,\, 1^{(r_2)})\\ \in
  \mcC_{(c^{(r_1)},1^{(r_2)})}^{\circ} \cap \N^{1+cr_1+r_2}
\end{multline*}
 satisfies both \eqref{equ:min.int} and \eqref{equ:initial} and is
 clearly unique with this property. If $\lambda$ is not a near
 rectangle, there exist more than one ways to ``complete'' the vector
 in \eqref{equ:initial} to an element $\beta \in
 \mcC_{\lambda}^{\circ} \cap \N^{1+\sum_{i=1}^r\lambda_i}$ such that
 $\beta_0=\lambda_1$ and $\beta_{i,1}=\lambda_1$ for $i\in[r]$. The
 difference between any two of these completions, however, is zero on
 these coordinates, hence is zero, contradiction.
\end{proof}

\begin{corollary}\label{cor:char.funeq.red}
  $\lambda$ is a near rectangle if and only if there exist $k,l\in\Z$
  such that
\begin{equation}\label{equ:funeq.red}
 \left. \zeta^{\nl}_{\mcL_\lambda,\red}(1/T)\right. = (-1)^lT^k
 \zeta^{\nl}_{\mcL_\lambda,\red}(T).
\end{equation}
In this case, $l \equiv 1 + c r_1+r_2\bmod(2)$ and $k =
c+\binom{c+1}{2}r_1+r_2$.
\end{corollary}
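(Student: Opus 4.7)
The proof would combine the algebraic-combinatorial machinery already set up in Section~\ref{subsec:red} with Proposition~\ref{prop:interior}. By the preceding discussion, $\zeta^{\nl}_{\mcL_\lambda,\red}(T)$ coincides with the Hilbert series $H(T)$ of the graded monoid algebra $R_{\mcC_\lambda}$ associated to the rational polyhedral cone $\mcC_{\lambda}$ in~\eqref{def:cone}, with the natural grading given by the coordinate sum. The plan is to invoke Stanley's reciprocity for such Hilbert series (cf.\ \cite[Theorem~12.7]{Stanley/96}) which asserts that $H(T)$ satisfies a functional equation of the form $H(1/T) = \pm T^k H(T)$ if and only if $R_{\mcC_\lambda}$ is Gorenstein, which in turn is equivalent to the existence of a unique minimal integral interior vector $\bfbeta \in \mcC_\lambda^{\circ}$. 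Combining this equivalence with Proposition~\ref{prop:interior}, we immediately obtain the stated ``if and only if''.

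To identify $k$ and $l$ in the near rectangle case $\lambda = (c^{(r_1)}, 1^{(r_2)})$, I would use the explicit form of the reciprocity: Ehrhart--Macdonald style reciprocity gives
\begin{equation*}
H(1/T) = (-1)^d \sum_{\boldsymbol{\gamma}\in \mcC_\lambda^{\circ} \cap \N^{1+cr_1+r_2}} T^{|\boldsymbol{\gamma}|},
\end{equation*}
where $d = \dim \mcC_\lambda = 1 + cr_1 + r_2$ and $|\,\cdot\,|$ denotes coordinate sum. In the Gorenstein case the interior sum factors as $T^{|\bfbeta|} H(T)$, whence $l \equiv d \pmod 2$ and $k = |\bfbeta|$. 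Then I would read off $\bfbeta$ from the proof of Proposition~\ref{prop:interior}, namely
\begin{equation*}
\bfbeta = (c,\, c, c-1, \dots, 2, 1,\, \dots,\, c, c-1,\dots,2,1,\, 1^{(r_2)}),
\end{equation*}
and compute $|\bfbeta| = c + r_1 \binom{c+1}{2} + r_2$, matching the claimed value of $k$. The congruence $l \equiv 1 + cr_1 + r_2 \pmod{2}$ follows directly from $l\equiv d \pmod 2$.

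The main (and essentially only) nontrivial input is the ``Gorenstein $\Leftrightarrow$ functional equation'' equivalence from Stanley's commutative algebra of cones, which has already been cited in the surrounding text; everything else is bookkeeping. The only mild subtlety is to verify that the reciprocity is applied with the correct grading: here the cone $\mcC_\lambda$ is pointed, the grading is the standard total degree, and the unique minimal interior lattice point $\bfbeta$ generates the interior as a $\mcC_\lambda \cap \Z^{d}$-module, so the factorization of the interior generating series as $T^{|\bfbeta|}H(T)$ is immediate. No obstacle beyond this routine verification is expected, since both Proposition~\ref{prop:interior} and the reciprocity principle are already available.
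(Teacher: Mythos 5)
Your proof is correct and follows essentially the same route as the paper: the equivalence is exactly the conjunction of the Gorenstein characterization cited from \cite[Theorem~12.7]{Stanley/96} in the paragraph preceding the Corollary with Proposition~\ref{prop:interior}, and the values of $k$ and $l$ are read off from the explicit $\bfbeta$ appearing in the proof of Proposition~\ref{prop:interior} together with the ambient dimension $1+cr_1+r_2$.
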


\begin{remark}
  If $\zeta^{\nl}_{\mcL_{\lambda}}(s)$ is almost uniform, say
  $\zeta^{\nl}_{\mcL_{\lambda}(\Zp)}(s)= W_{\lambda}(p,p^{-s})$ for
  $W_\lambda(X,Y)\in\Q(X,Y)$ and almost all primes $p$, then
  Corollary~\ref{cor:char.funeq.red} implies that the ``near rectangle
  condition'' on $\lambda$ is also necessary for local $\mfp$-adic
  functional equations. Indeed, in this case,
  $\zeta^{\nl}_{\mcL_{\lambda},\red}(T) = W_{\lambda}(1,T)$;
  cf.\ \cite[Section~3]{Evseev/09}.
\end{remark}

\begin{example}\label{exa:M4.vs.Fil4}
  A functional equation of the form \eqref{equ:funeq.red} for reduced
  zeta functions is not, in general, sufficient for functional
  equations for generic $\mfp$-adic zeta functions. Consider, for
  instance, the Lie rings $M_4=\mcL_{(4)}$ and $\Fil_4$;
  cf.\ Example~\ref{exa:Fil4}. Whereas the local ideal zeta functions
  $\zeta^{\nl}_{M_4(\Zp)}(s)$ satisfy functional equations
  (cf.\ Corollary~\ref{cor:rectangle}), the local ideal zeta functions
  $\zeta^{\nl}_{\Fil_4(\Zp)}(s)$ do not
  (cf.\ \cite[Theorem~2.39]{duSWoodward/08}). The respective reduced
  ideal zeta functions, however, coincide;
  cf.\ \cite[Example~4.5]{Evseev/09}.
\end{example}

\section{Further examples}\label{sec:further}
In this section we discuss three applications of
Theorem~\ref{thm:main} and Corollary~\ref{cor:char.p} to submodule
zeta functions which are not ideal zeta functions of nilpotent Lie
lattices. The case of one-generator matrix algebras is discussed in
Section~\ref{subsec:cyclic}. In Section~\ref{sec:abelian} we consider
a class of abelian matrix algebras. Section~\ref{sec:sut} is dedicated
to full algebras of strictly upper triangular matrices. Throughout,
$\lri$ is a compact discrete valuation ring \revone{of arbitrary
  characteristic} and residue field cardinality~$q$, \revone{a power
  of the residue characteristic $p$}, and let $n\in\N$.

\subsection{The case $d=1$}\label{subsec:cyclic}
In \cite{Rossmann/17}, zeta functions enumerating submodules invariant
under a single, not necessarily nilpotent endomorphism of a finitely
generated $\Gri$-module are studied. \cite[Theorem~A]{Rossmann/17}
gives, in particular, an explicit formula for almost all of the Euler
factors of such zeta functions in terms of translates of Dedekind zeta
functions of number fields. Both these number fields and the
combinatorics of the translations are determined by the rational
canonical form for the given
endomorphism. \cite[Theorem~B]{Rossmann/17} establishes generic
functional equations for the Euler factors. In the nilpotent case, to
which the general case is reduced in \cite[Section~3]{Rossmann/17},
they confirm Theorem~\ref{thm:main}.

\subsection{A class of abelian matrix algebras}\label{sec:abelian}
\subsubsection{Local formulae}\label{subsec:abelian.local}
Let $p$ be a prime and $\lri$ \revone{be a compact discrete valuation
  ring of residue characteristic $p$.} We set
\begin{equation}\label{equ:bfo}
  \mcM_{\bfo}(\lri) = \mcM_{(\underbrace{1,\dots,1}_{n \textup{ times}})}(\lri) = \left\{
    \left( \begin{matrix}0&\diag(z_1,\dots,z_n)\\0&0\end{matrix}\right)\mid
    z_1,\dots,z_n\in \lri\right\}\leq \Mat_{2n}(\lri)
\end{equation} 
and consider $\lri^{2n}$ as an $\mcM_{\bfo}(\lri)$-module by right
multiplication.  We record a formula for the submodule zeta functions
$\zeta_{\mcM_{\bfo}(\lri) \car \lri^{2n}}(s)$. It is very similar to
that for the ideal zeta functions $\zeta^{\nl}_{H(\lri)^{n}}(s)$ of
the $n$-fold direct product of the Heisenberg $\lri$-Lie lattices
$H(\lri)$ of strictly upper triangular $3\times3$-matrices over $\lri$
given in \cite[Theorem~3.1]{SV1/15}.  We formulate this result partly
in the notation of \cite{SV1/15}\revone{ and note that the relevant
  local results in \cite{SV1/15} are -- despite their formulation in
  characteristic zero only -- in fact valid for compact discrete
  valuation rings of arbitrary characteristic.}  In particular,
$\mcD_{2n}$ denotes the set of Dyck words in letters $\bfz$ and $\bfo$
of length $2n$, of cardinality $\frac{1}{n+1}\binom{2n}{n}$, the
$n$-th Catalan number. Recall the formula \eqref{equ:c=1} for
$\zeta_{\{0\} \car \lri^n}(s)$.

\begin{theorem}\label{thm:abelian.tot.split}
  \begin{equation*}
    \zeta_{\mcM_{\bfo}(\lri)\car \lri^{2n}}(s) = (1-q^{-s})^n
    \zeta_{\{0\} \car \lri^n}(s) \sum_{w\in \mcD_{2n}} D_w(q,q^{-s}),
  \end{equation*} 
  where, for $w = \prod_{i=1}^r ( \bfz^{L_i - L_{i-1}}
  \bfo^{M_i-M_{i-1}})\in\mcD_{2n}$, the rational function
  $D_w(q,q^{-s})$ is defined as $D_w^{\bfo}(p,p^{-s})$ in
  \cite[Theorem~3.1]{SV1/15} with $q$ in place of $p$ and with
  numerical data
\begin{alignat*}{2}
  x_j & = q^{j(n + L_i - j)-s(L_i + j)} &&\quad\text{ for } j
  \in \,]M_{i-1}, M_i] ,\\ y_j & = q^{(n -M_{i-1} + j)M_{i-1}-s(j +
    M_{i-1})} &&\quad\text{ for } j \in \,]L_{i-1}, L_i].
\end{alignat*}
\end{theorem}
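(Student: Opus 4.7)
The algebra $\mcM_{\bfo}(\lri)$ is abelian and satisfies $\mcM_{\bfo}(\lri)^{2}=0$, so the pair $(\lri^{2n},\mcM_{\bfo}(\lri))$ has nilpotency class $c=2$ and trivially fulfils Condition~\ref{con}. My plan is to mimic closely the proof of \cite[Theorem~3.1]{SV1/15} for $\zeta^{\nl}_{H(\lri)^{n}}(s)$: re-express the zeta function as a $\mfp$-adic integral, stratify the integration domain into cones indexed by Dyck words, and evaluate each cone explicitly. Write $\lri^{2n}=\mcL_1\oplus\mcL_2$ with $\mcL_1=\langle e_1,\dots,e_n\rangle_{\lri}$ and $\mcL_2=\langle e_{n+1},\dots,e_{2n}\rangle_{\lri}=Z_1$.

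First I would translate the invariance condition into a coordinate-wise compatibility. For a full sublattice $H\leq\lri^{2n}$, set $H_2=H\cap\mcL_2$ and let $H_1\leq\mcL_1$ be the image of $H$ under the projection onto $\mcL_1$. Since $\mcM_{\bfo}(\lri)$ maps $e_i$ into $\lri\cdot e_{n+i}$ and kills the remaining basis vectors, the condition $H\cdot\mcM_{\bfo}(\lri)\subseteq H$ decouples into the $n$ separate requirements $\pi_i(H_1)\cdot e_{n+i}\subseteq H_2$ for $i\in[n]$, where $\pi_i\colon\mcL_1\to\lri$ is the $i$-th coordinate projection. The enumeration of $H$ therefore amounts to choosing $H_2\leq\mcL_2$, a compatible $H_1\leq\mcL_1$, and a lift of $H_1$ to $H/H_2\subseteq\lri^{2n}/H_2$. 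As in the Heisenberg computation, collecting the unconstrained pieces of this enumeration -- the free choice of $H_2$ together with the lift multiplicities, after factoring out the coordinate-wise compatibility -- will produce the prefactor $(1-q^{-s})^{n}\,\zeta_{\{0\}\car\lri^{n}}(s)$.

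Next, I would apply the $\mfp$-adic machinery of \cite{Voll/10} also used in the proof of Theorem~\ref{thm:main}, parametrizing $H_1$ and $H_2$ by Hermite representatives with elementary-divisor data and rewriting the remaining constrained Dirichlet sum as a $\mfp$-adic integral of the shape \cite[(6)]{Voll/10}. Partitioning the integration domain by the interleaved ordering of the $2n$ valuations contributed by $H_1$ (recorded as letters $\bfo$) and by $H_2$ (recorded as letters $\bfz$) decomposes the integral into contributions indexed by words in $\{\bfo,\bfz\}^{2n}$; the coordinate-wise compatibility of the first step forces every prefix of such a word to contain at least as many $\bfz$'s as $\bfo$'s, so only $w\in\mcD_{2n}$ contribute. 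The run-length decomposition $w=\prod_{i=1}^{r}(\bfz^{L_i-L_{i-1}}\bfo^{M_i-M_{i-1}})$ then packages the numerical data $(L_i,M_i)$.

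The final step is to compute each cone contribution as a product of geometric progressions in the parameters of $H_1$ and $H_2$. Tracking the cotype counts and the coordinate-wise compatibility yields the explicit exponents $x_j=q^{j(n+L_i-j)-s(L_i+j)}$ for $j\in\,]M_{i-1},M_i]$ and $y_j=q^{(n-M_{i-1}+j)M_{i-1}-s(j+M_{i-1})}$ for $j\in\,]L_{i-1},L_i]$, recovering the rational function $D_w(q,q^{-s})$ of \cite[Theorem~3.1]{SV1/15} verbatim. The main obstacle will be the third step: one must verify that the natural cone decomposition of the $\mfp$-adic integral matches precisely the Dyck-word stratification of the cited Heisenberg formula, so that the evaluation of $D_w^{\bfo}$ can be quoted rather than redone. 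Once this alignment is secured, the asserted identity follows by assembling the factors.
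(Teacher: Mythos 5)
Your proposal matches the paper's approach: the paper's proof is a one-line ``mutatis mutandis'' reference to the proof of \cite[Theorem~3.1]{SV1/15}, observing that the centre $Z_1(\lri)\cong\lri^n$ and cocentre $\lri^{2n}/Z_1(\lri)\cong\lri^n$ here play the roles of $Z(H(\lri)^n)\cong\lri^n$ and $H(\lri)^n/Z(H(\lri)^n)\cong\lri^{2n}$ there, which accounts for the change in numerical data. Your sketch simply unpacks what that transfer involves (the $H_1/H_2$ decomposition, the coordinate-wise decoupling of the invariance condition, and the Dyck-word stratification of the resulting $\mfp$-adic integral), so it is the same argument spelled out in more detail.
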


\begin{proof}[Sketch of proof.] {Mutatis mutandis}, the analysis of
  \cite[Theorem~3.1]{SV1/15} carries over.  The centre
  $Z(H(\lri))\cong \lri^n$ is replaced by $Z_1(\lri) =
  \Cent_{\mcM_{\bfo}(\lri)}(\lri^{2n}) \cong \lri^n$, whereas the role
  of the cocentre $H(\lri) / Z(H(\lri))\cong \lri^{2n}$ is taken by
  $\lri^{2n} / Z_1(\lri)\cong \lri^n$, explaining the change in
  numerical data.
\end{proof}

The construction of the algebra $\mcM_{\bfo}(\lri)$ may be generalized
by replacing the diagonal matrices in \eqref{equ:bfo} by
``generic block-diagonal'' matrices with block sizes $f_1,\dots,f_g$,
say. Roughly speaking, formulae for the submodule zeta functions
associated to the resulting nilpotent algebras
$$\mcM_{\bff}(\lri)= \mcM_{(f_1,\dots,f_g)}(\lri) =
\left( \begin{matrix} 0 &
  \diag(\Mat_{f_1}(\lri),\dots,\Mat_{f_g}(\lri))\\0&0 \end{matrix}
\right) \leq \Mat_{2n}(\lri),$$ acting on $\lri^{2n}$ by right
multiplication, are obtained by modifying the ``numerical data'' in
the formulae given in \cite[Theorem~3.6]{SV1/15} and in the formula
for the zeta function preceding it. We leave the precise details to
the reader, spelling out only the result in the other ``extremal''
case $\bff = (n)$, yielding
$$\mcM_{(n)}(\lri) = \left( \begin{matrix} 0 & \Mat_{n}(\lri)
  \\0&0 \end{matrix} \right) \leq \Mat_{2n}(\lri).$$
The following is analogous to \cite[Corollary~3.7]{SV1/15}.

\begin{theorem}\label{thm:abelian.inert}
\begin{equation}\label{equ:abelian.inert}
  \zeta_{\mcM_{(n)}(\lri)\car \lri^{2n}}(s) = \zeta_{\{0\} \car
    \lri^n}(s)\frac{1}{1-x_n}\sum_{I\subseteq[n-1]}\binom{n}{I}_{q^{-1}}\prod_{i\in
    I}\frac{x_i}{1-x_i},
\end{equation} where $\binom{n}{I}_X\in\Z[X]$ denotes the
Gaussian multinomial coefficient and
$$x_j = q^{j(2n-j)-s(n+j)} \text{ for }j\in[n].$$
\end{theorem}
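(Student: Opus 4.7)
The plan is to follow the pattern of \cite[Corollary~3.7]{SV1/15} and the proof sketch of Theorem~\ref{thm:abelian.tot.split}, parameterizing $\mcM_{(n)}(\lri)$-invariant full $\lri$-sublattices of $\lri^{2n}$ by a ``top'' and a ``bottom'' part.

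First I would set up the parametrization. Given a full $\lri$-sublattice $\Lambda \leq \lri^{2n} = \lri^n \oplus \lri^n$, write $\Lambda_1 \leq \lri^n$ for the image of $\Lambda$ under the projection onto the first factor, and $\Lambda_2 = \Lambda \cap (\{0\} \oplus \lri^n) \leq \lri^n$ for its intersection with $Z_1(\lri) = \{0\} \oplus \lri^n$. Since the $\mcM_{(n)}(\lri)$-action sends $(x,y)$ to $(0, xA)$ for $A \in \Mat_n(\lri)$, invariance of $\Lambda$ is equivalent to $\Lambda_1 \cdot \Mat_n(\lri) \subseteq \Lambda_2$. A short Smith-normal-form calculation shows that if the elementary divisors of $\lri^n / \Lambda_1$ are $\pi^{e_1} \geq \cdots \geq \pi^{e_n}$, then $\Lambda_1 \cdot \Mat_n(\lri) = \pi^{e_n}\lri^n$, so the invariance condition collapses to the single inclusion $\pi^{e_n}\lri^n \subseteq \Lambda_2$. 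For each admissible pair $(\Lambda_1, \Lambda_2)$, the set of $\mcM_{(n)}(\lri)$-invariant lattices $\Lambda$ inducing this pair is in bijection with $\Hom_{\lri}(\Lambda_1, \lri^n/\Lambda_2)$.

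Next I would compute the resulting Dirichlet series. Parametrize $\Lambda_1$ via the standard elementary-divisor data: a subset $I \subseteq [n-1]$ encoding positions of strict decrease in the divisor chain, jump parameters $\bfr_I \in \N^{|I|}$, and the ``bottom'' divisor $e_n \in \N_0$. The number of $\lri$-sublattices of $\lri^n$ with fixed $(I, \bfr_I, e_n)$ is $\binom{n}{I}_{q^{-1}}$ times a parabolic index in $\GL_n(\lri)$, as in Section~\ref{subsec:proof.main.thm}. For fixed $\Lambda_1$, the sum over $\Lambda_2$ containing $\pi^{e_n}\lri^n$, weighted by $|\Hom_{\lri}(\Lambda_1, \lri^n/\Lambda_2)| \cdot |\lri^n : \Lambda_2|^{-s}$, reduces via the identification $\Lambda_2 \leftrightarrow \Lambda_2/\pi^{e_n}\lri^n$ to an enumeration over $\lri$-sublattices of the finite quotient $\lri^n/\pi^{e_n}\lri^n$. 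Separating this into an ``unconstrained'' contribution, which produces the prefactor $\zeta_{\{0\} \car \lri^n}(s)$, and a finite correction depending on the relative elementary-divisor profile of $(\Lambda_1, \Lambda_2)$ is the technical heart of the argument.

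The main obstacle will be the bookkeeping needed to match the Hom-count, the two index contributions, and the elementary-divisor multiplicities to the monomials $x_j = q^{j(2n-j)-s(n+j)}$. For each $j \in [n]$, the contribution at level $j$ factors as $q^{j(n-j)}$ (from the Gaussian binomial combinatorics on $\Lambda_1$), $q^{jn}$ (from the $n$-dimensional image space in the Hom-count), and $q^{-s(n+j)}$ (from the combined $\Lambda_1$- and $\Lambda_2$-indices at level $j$). Summing over $\bfr_j \in \N$ for $j \in I$ yields $\prod_{j \in I} \frac{x_j}{1-x_j}$, while the free variation in $e_n \in \N_0$ contributes $\frac{1}{1-x_n}$. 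Combining these with the prefactor $\zeta_{\{0\} \car \lri^n}(s)$ and the Gaussian multinomial coefficients $\binom{n}{I}_{q^{-1}}$ produces the claimed formula \eqref{equ:abelian.inert}. As a consistency check, since $\mcM_{(n)}(\lri)^2 = 0$, Condition~\ref{con} holds trivially with $c=2$, and Theorem~\ref{thm:main} predicts a functional equation which can be verified directly from \eqref{equ:abelian.inert}.
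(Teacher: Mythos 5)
Your parametrization of $\mcM_{(n)}(\lri)$-invariant sublattices by pairs $(\Lambda_1,\Lambda_2)$ with $\Lambda_1\cdot\Mat_n(\lri)\subseteq\Lambda_2$, and the count $|\Hom_\lri(\Lambda_1,\lri^n/\Lambda_2)| = |\lri^n:\Lambda_2|^n$ (using $\Lambda_1\cong\lri^n$), are correct, and the observation $\Lambda_1\Mat_n(\lri)=\pi^{e_n}\lri^n$ is right. The paper itself does not give a from-scratch proof (it cites modifying \cite[Corollary~3.7]{SV1/15}), so a direct derivation would be a welcome addition — but your bookkeeping step has the roles of $\Lambda_1$ and $\Lambda_2$ reversed, and as a result the claimed factorization does not go through.

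Concretely, you sum over $\Lambda_1$ (parametrized by $(I,\bfr_I,e_n)$) on the outside and attribute the Igusa factor $\tfrac{1}{1-x_n}\sum_I\binom{n}{I}_{q^{-1}}\prod_{i\in I}\tfrac{x_i}{1-x_i}$ to this outer sum, leaving the inner constrained $\Lambda_2$-sum to yield the prefactor $\zeta_{\{0\}\car\lri^n}(s)$. Neither half of this works. The inner sum is
$$T(e_n)\;=\;\sum_{\pi^{e_n}\lri^n\subseteq\Lambda_2\subseteq\lri^n}|\lri^n:\Lambda_2|^{n-s},$$
which is a truncation of $\zeta_{\{0\}\car\lri^n}(s-n)$ — note the shift to $s-n$, not $s$ — and $T(e_n)$ is genuinely a function of $e_n$ that is not a geometric progression times a constant, so it neither produces the prefactor $\zeta_{\{0\}\car\lri^n}(s)$ nor lets the remaining $e_n$-sum collapse to $\tfrac{1}{1-x_n}$. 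Also, your factor $q^{jn}$ is attributed to the Hom-count, but $|\Hom_\lri(\Lambda_1,\lri^n/\Lambda_2)|=|\lri^n:\Lambda_2|^n$ depends only on $\Lambda_2$, so it cannot feed into a $\Lambda_1$-parametrized product over $j\in I$.

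The correct order is the opposite. Sum over $\Lambda_2$ (with elementary-divisor data $(I,\bfr_I,f_n)$ and largest exponent $f_1$) on the outside. The invariance condition on $\Lambda_1$ becomes $\Lambda_1\subseteq\pi^{f_1}\lri^n$, and the substitution $\Lambda_1=\pi^{f_1}\Lambda_1'$ gives
$$\sum_{\Lambda_1\subseteq\pi^{f_1}\lri^n}|\lri^n:\Lambda_1|^{-s}\;=\;q^{-nf_1s}\,\zeta_{\{0\}\car\lri^n}(s),$$
which is where the prefactor $\zeta_{\{0\}\car\lri^n}(s)$ actually comes from. The remaining sum is $\zeta_{\{0\}\car\lri^n}(s)\sum_{\Lambda_2}|\lri^n:\Lambda_2|^{n-s}q^{-nsf_1(\Lambda_2)}$, and now each level $\iota\in I$ of the $\Lambda_2$-chain contributes $q^{\iota(n-\iota)r_\iota}$ (count), $q^{(n-s)\iota r_\iota}$ (index weight), and $q^{-ns r_\iota}$ (from the $f_1$-shift), whose product is $x_\iota^{r_\iota}$; similarly the bottom divisor $f_n$ of $\Lambda_2$ contributes $x_n^{f_n}$. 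Summing these geometric progressions yields the Igusa factor. You may wish to check the resulting formula against the explicit $n=1$ case $\tfrac{1}{(1-q^{-s})(1-q^{1-2s})}$ and the $n=2$ case $\tfrac{1+q^{2-3s}}{(1-q^{-s})(1-q^{1-s})(1-q^{3-3s})(1-q^{4-4s})}$, both of which are straightforward to verify directly.
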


\begin{remark}
  Let $k$ be a field. By a theorem of Schur, $ \left( \begin{matrix}
    k\Id_n & \Mat_{n}(k)\\ 0 & k\Id_{n} \end{matrix}\right)$ is a
  maximal abelian subalgebra of $\Mat_{2n}(k)$;
  cf.\ \cite{Schur/05}. The right-hand side of
  \eqref{equ:abelian.inert} is a product of two \emph{Igusa functions}
  in the terminology of \cite[Definition~2.5]{SV1/15}.
\end{remark}

\begin{theorem}\label{thm:funeq.abel}
  For all $g\in\N$ and $\bff=(f_1,\dots,f_g)\in\N^g$, the functional
  equation
\begin{equation*}
  \left.\zeta_{\mcM_{\bff}(\lri) \car \lri^{2n}}(s)\right|_{\revtwo{p\rarr p^{-1}}}
  = q^{\binom{2n}{2}-3ns}\zeta_{\mcM_{\bff}(\lri)\car \lri^{2n}}(s)
\end{equation*}
holds.
\end{theorem}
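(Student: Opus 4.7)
The key observation is that $\mcM_{\bff}(\lri)$ is nilpotent of class $c=2$: one has $\mcM_{\bff}(\lri)^2 = 0$ by block-multiplication. By Remark~\ref{rem:con.sat}, Condition~\ref{con} is automatically satisfied for any associative algebra of endomorphisms whose square is zero. Concretely, take $\mcL_1$ to be the span of the first $n$ standard basis vectors of $\lri^{2n}$ and $\mcL_2$ the span of the last $n$. Then Assumption~\ref{ass} holds, and the centralizer of $\mcM_{\bff}(\lri)$ acting by right multiplication is $Z_1 = \mcL_2 \cong \lri^n$. Taking the finite collection of elementary matrices $E_{i,\,n+j}$ lying in $\mcM_{\bff}(\lri)$ as generators, each such generator sends $\mcL_1$ into $\mcL_2$ and $\mcL_2$ into $\{0\} = \mcL_3$, so Condition~\ref{con2} is met. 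The numerical invariants read $N_0 = 2n$, $N_1 = n$, $N_2 = 0$, hence $\sum_{i=0}^{c-1} N_i = 3n$ and $\binom{N_0}{2} = \binom{2n}{2}$; moreover $(-1)^{N_0} = 1$.

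Applying Theorem~\ref{thm:main} with $\Gri = \Z$ then yields
\[
\left.\zeta_{\mcM_{\bff}(\Lri)\car \Lri^{2n}}(s)\right|_{q\rarr q^{-1}} = q^{f\left(\binom{2n}{2} - 3ns\right)}\,\zeta_{\mcM_{\bff}(\Lri) \car \Lri^{2n}}(s)
\]
for almost all rational primes $p$, all finite extensions $\Lri$ of $\Z_p$, and $q^f$ the cardinality of the residue field of $\Lri$. Specialising to $\Lri = \lri$ recovers the claimed identity for all $\lri$ lying over almost all primes.

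To remove the ``almost all'' qualifier and obtain the statement for \emph{every} compact discrete valuation ring $\lri$ of characteristic zero, I would invoke the explicit formulae developed in Section~\ref{sec:abelian}. Theorems~\ref{thm:abelian.tot.split} and \ref{thm:abelian.inert}, and their generalisation to arbitrary $\bff$ (obtained by adapting the numerical data of \cite[Theorem~3.6]{SV1/15}, as indicated in the paragraph preceding Theorem~\ref{thm:abelian.inert}), express $\zeta_{\mcM_{\bff}(\lri)\car \lri^{2n}}(s)$ as $W_{\bff}(q,q^{-s})$ for a single rational function $W_{\bff}\in\Q(X,Y)$ independent of~$\lri$. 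The functional equation is thus a polynomial identity in $q$ and $q^{-s}$; having been established for infinitely many values of $q$, it holds as an identity in $\Q(X,Y)$, hence for every admissible $\lri$.

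The main obstacle is therefore not the symmetry itself but confirming that the formula for arbitrary $\bff$ is genuinely uniform in~$q$. This follows by rerunning the $\mfp$-adic integration argument used to prove the Heisenberg-type results in \cite{SV1/15}, with the only changes being in the numerical data attached to the Dyck-word (respectively Igusa-function) building blocks: the role of $Z(H(\lri))$ is played by $Z_1 \cong \lri^n$ and that of the cocentre by $\lri^{2n}/Z_1 \cong \lri^n$, exactly as in the proof sketch of Theorem~\ref{thm:abelian.tot.split}. Once this uniformity is in place, the two-step argument outlined above closes the proof.
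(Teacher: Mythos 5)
Your proof is correct, but it takes a genuinely different route from the paper's. The paper's own proof is a one-line sketch: rerun the proof of \cite[Theorem~1.2]{SV1/15} (the Heisenberg case over number rings, unramified primes), replacing the numerical data $(3n, 5n)$ by $(2n, 3n)$; that argument establishes the explicit uniform formulae and verifies the functional equation directly from the Dyck-word/Igusa-function expressions. You instead apply Theorem~\ref{thm:main} to the pair $(\Z^{2n}, \mcM_{\bff}(\Z))$ — correctly observing that $\mcM_{\bff}^2 = 0$ forces $c=2$, so Condition~\ref{con} holds for free by Remark~\ref{rem:con.sat}, with $N_0 = 2n$, $N_1 = n$ yielding $\sum_{i<c} N_i = 3n$ and $\binom{N_0}{2} = \binom{2n}{2}$ — and then upgrade from ``almost all'' to ``all'' by combining uniformity (Theorems~\ref{thm:abelian.tot.split} and~\ref{thm:abelian.inert} and their generalisation to arbitrary $\bff$) with a Zariski-density argument in the variable $q$. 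This is a legitimate bootstrap: once $\zeta_{\mcM_\bff(\lri)\car\lri^{2n}}(s) = W_\bff(q,q^{-s})$ uniformly, the identity $W_\bff(X^{-1},Y^{-1}) = X^{\binom{2n}{2}}Y^{3n}W_\bff(X,Y)$, holding for infinitely many prime powers $X = q^f$, holds identically in $\Q(X,Y)$. The trade-off: both routes must establish the uniform explicit formulae (which you, like the paper, defer to an adaptation of \cite{SV1/15}), but your use of Theorem~\ref{thm:main} then dispenses with the combinatorial verification of the symmetry that the paper's cited proof performs directly. The paper's route is shorter to state but hides the work in the citation; yours is more modular and makes the deduction from the paper's general machinery explicit. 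The only thing to be careful about — and you do flag it — is that the ``$q\rarr q^{-1}$'' operation is only unambiguous on a per-$\lri$ basis once uniformity is in hand, so the bootstrap genuinely cannot be run before the uniform formula is established for \emph{all} $\lri$, not merely almost all.
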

\begin{proof}[Sketch of proof.] Analogous to \cite[Theorem~1.2]{SV1/15} with $3n$ and $5n(=3n+2n)$
replaced by $2n$ and $3n(=2n+n)$, respectively.
\end{proof}

\subsubsection{Global zeta functions and Euler products} The local
formulae in Section~\ref{subsec:abelian.local} may be put in a global
context as follows. For any ring $A$, consider
$$\mcM(A) = \left( \begin{matrix}0 & A \\ 0 & 0 \end{matrix}\right)
\leq \Mat_2(A).$$
Let $K$ be a number field with ring of integers $\Gri$ and of
degree~$n$, say. By restriction of scalars from $K$ to $\Q$, we may
consider $\mcM(\Gri)$ as a subalgebra of $\Mat_{2n}(\Z)$, turning
$\Z^{2n}$ into an $\mcM(\Gri)$-module by right multiplication, with
associated submodule zeta function
\begin{equation}\label{equ:euler.abelian}
  \zeta_{\mcM(\Gri) \car \Z^{2n}}(s) = \prod_{p \textup{ prime}}
  \zeta_{\mcM(\Gri\otimes_\Z\Zp) \car \Zp^{2n}}(s).
\end{equation}
This is a close analogue of the ideal zeta function
$\zeta^{\nl}_{H(\Gri)}(s)$ of the Heisenberg Lie ring over $\Gri$
studied in \cite{SV1/15}. If $p$ is unramified in~$K$, i.e.\ $p\Gri =
\prod_{i=1}^g \mfp_i$ for pairwise distinct prime ideals $\mfp_i$ of
$\Gri$ with residue degrees $f_i$ for $i\in[g]$, then
$\Gri\otimes_{\Z}\Zp \cong \prod_{i=1}^g \Z_{p^{f_i}}$, where
$\Z_{p^{f_i}}$ denotes the unramified extension of $\Zp$ of
degree~$f_i$. Hence
$$\mcM(\Gri\otimes_\Z\Zp) \cong \mcM_{\bff}(\Zp).$$ Therefore, all but
finitely many of the Euler factors in \eqref{equ:euler.abelian} are
covered by the formulae sketched -- and written out in
Theorem~\ref{thm:abelian.tot.split} for the primes which split totally
in $K$ and in Theorem~\ref{thm:abelian.inert} for the primes which
stay inert in $K$ -- in Section~\ref{subsec:abelian.local}. Formulae
for the Euler factors indexed by the rational primes which remain
unsplit in $K$ (but may ramify) may be obtained by modifying those for
$\zeta^{\nl}_{H(\Gri)}(s)$ described in \cite{SV2/14}. Note that
Theorem~\ref{thm:main} is applicable for $\mcE = \mcM(\Gri)$ and~$\mcL
= \Z^{2n}$ as $c=2$; cf.\ Remark~\ref{rem:con.sat}. The functional
equations established in Theorem~\ref{thm:funeq.abel} strengthen the
result in this special case by implying that the set of exceptional
primes is contained in (and conjecturally equal to) the set of primes
which ramify in~$K$.

\subsection{Strictly upper triangular matrices}\label{sec:sut} 
For $m\in\N$, let $\mfu_m(\lri)$ be the associative algebra of all
strictly upper triangular $m\times m$-matrices over $\lri$, acting on
$\lri^m$ by right-multiplication, say. Given a partition
$\lambda=(\lambda_1,\dots,\lambda_r)\in\N^r$ of
$n=\sum_{i=1}^r\lambda_i$, consider
$$\mfu_{\lambda}(\lri)=\bigoplus_{i=1}^r \mfu_{\lambda_i}(\lri),$$
diagonally embedded into $\mfu_{n}(\lri)$. Theorem~\ref{thm:main}
\revone{and Corollary~\ref{cor:char.p} are} clearly applicable and
impl\revone{y} the following result.

\begin{theorem}\label{thm:funeq.u}
  For almost all primes $p$ \revone{and all compact discrete valuation
    rings $\lri$ of residue characteristic~$p$,}
$$
\left.\zeta_{\mfu_{\lambda}(\lri)\car\lri^n}(s)\right|_{p\rarr p^{-1}}
= (-1)^n q^{\left(\binom{n}{2} - s\sum_{i=1}^r
    \binom{\revtwo{\lambda_i+1}}{2}\right)}
\zeta_{\mfu_{\lambda}(\lri)\car\lri^n}(s).$$
\end{theorem}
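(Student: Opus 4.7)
The plan is to invoke Theorem~\ref{thm:main} directly; essentially all the work is bookkeeping once the upper central series and a cocentral basis are identified. Throughout, write the standard basis of each summand $\lri^{\lambda_i}$ as $(e_{i,1},\dots,e_{i,\lambda_i})$, and recall the action of $\mfu_{\lambda_i}(\lri)$ on row vectors by right multiplication.

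First I would compute the upper central series. Since $\mfu_\lambda(\lri)$ acts block-diagonally, the $s$-th centralizer splits as $Z_s = \bigoplus_{i=1}^r Z_s^{(i)}$, where a direct inspection shows $Z_s^{(i)} = \langle e_{i,\lambda_i - s + 1},\dots, e_{i,\lambda_i}\rangle_\lri$ for $s\leq \lambda_i$ and $Z_s^{(i)}=\lri^{\lambda_i}$ for $s\geq \lambda_i$. Hence the nilpotency class is $c=\lambda_1$ and
\begin{equation*}
N_s \;=\; \rk_\lri(\mcL/Z_s) \;=\; \sum_{i:\,\lambda_i>s}(\lambda_i-s), \qquad \sum_{s=0}^{c-1}N_s \;=\; \sum_{i=1}^r\sum_{s=0}^{\lambda_i-1}(\lambda_i-s) \;=\; \sum_{i=1}^r\binom{\lambda_i+1}{2}.
\end{equation*}
This already matches the exponent on the right hand side of the claimed functional equation.

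Next I would exhibit the complements required by Assumption~\ref{ass} and verify Condition~\ref{con}. Set
\begin{equation*}
\mcL_k \;=\; \bigoplus_{i:\,\lambda_i\geq c-k+1} \lri\, e_{i,\,\lambda_i - c + k}, \qquad k=1,\dots,c,
\end{equation*}
so that $Z_s = \mcL_{c-s+1}\oplus\cdots\oplus \mcL_c$ by construction. As generators of $\mcE=\mfu_\lambda(\lri)$ take the subdiagonal elementary matrices $\{E^{(i)}_{p,p+1} : i\in[r],\ p\in[\lambda_i-1]\}$, each embedded in its corresponding block. They generate $\mfu_\lambda(\lri)$ as an associative algebra since $E^{(i)}_{p,p+1}\cdot E^{(i)}_{p+1,p+2} = E^{(i)}_{p,p+2}$, and so on, recovers every strictly upper triangular elementary matrix in each block. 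Moreover, right multiplication by $E^{(i)}_{p,p+1}$ either annihilates $e_{i,\lambda_i-c+k}$ or sends it to $e_{i,\lambda_i-c+k+1}\in\mcL_{k+1}$, so the shift condition $c_k\mcL_j\subseteq \mcL_{j+1}$ of~\eqref{equ:shift} is satisfied. If we order the cocentral basis so that $\mcL_1$-vectors come first, then $\mcL_2$-vectors, and so on, the matrices of these generators are supported on the first block-superdiagonal in the sense of Condition~\ref{con2}.

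With Condition~\ref{con} verified, Theorem~\ref{thm:main} applies to the pair $(\mcL,\mcE)=(\lri^n,\mfu_\lambda(\lri))$ and the claimed functional equation of Theorem~\ref{thm:funeq.u} is exactly the specialization of \eqref{equ:funeq.submodule} to the numerical data $n$, $c=\lambda_1$, and $\sum_{s=0}^{c-1}N_s=\sum_{i=1}^r\binom{\lambda_i+1}{2}$ computed above. The hard part of the argument, in the sense of a genuine obstacle, is already absorbed into Theorem~\ref{thm:main}; the only point requiring minor care here is the order on the cocentral basis that makes the subdiagonal generators block-superdiagonal in matrix form, which is straightforward.
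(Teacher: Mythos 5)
Your proposal is correct and follows exactly the route the paper takes (the paper simply asserts ``Theorem~\ref{thm:main} is clearly applicable'' without spelling out the verification). Your identification of the upper central series, the graded complements $\mcL_k$, the subdiagonal elementary matrix generators, and the computation $\sum_{s=0}^{c-1}N_s=\sum_{i=1}^r\binom{\lambda_i+1}{2}$ (which is what the paper's $\binom{n_i+1}{2}$ means here) are all the details the paper leaves implicit.
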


Explicit formulae for generic submodule zeta functions of the form
$\zeta_{\mfu_{\lambda}(\lri)\car\lri^n}(s)$ \revone{have been
  computed} for $\lambda=(m)$, $m\leq 5$ as well as for numerous other
partitions of natural numbers $n\leq 7$;
see~\cite[Section~9.4]{Rossmann/16} and the database in the computer
algebra package \cite{Rossmannzeta}. All these functions are given by
rational functions in $q$ and $q^{-s}$. Theorem~\ref{thm:funeq.u} is
consistent with and explains the functional equations recorded in
\cite[Theorems~9.5, 9.7, 9.8]{Rossmann/16}.

\section*{Acknowledgments} 
I am grateful for helpful conversations I had with Luke Woodward, over
and beyond the mathematical inspiration I gained from reading
\cite{duSWoodward/08}, a book whose influence on the paper I
acknowledge in detail in Section~\ref{subsec:nil.exa}. I thank Tobias
Rossmann for numerous discussions which had a profound impact on this
paper, rendering its results much more general and
conceptual. \revone{I am grateful to two anonymous referees whose
  comments helped me to improve the paper's exposition significantly
  and prompted Corollary~\ref{cor:char.p}. To Raf Cluckers I am
  particularly endebted for elucidations about the transfer principle
  which greatly helped me in the formulation of this corollary.} Work
on this paper was supported by DFG Sonderforschungsbereich~701 at
Bielefeld University.


\def\cprime{$'$} \def\cprime{$'$}
\providecommand{\bysame}{\leavevmode\hbox to3em{\hrulefill}\thinspace}
\providecommand{\MR}{\relax\ifhmode\unskip\space\fi MR }
\providecommand{\MRhref}[2]{%
  \href{http://www.ams.org/mathscinet-getitem?mr=#1}{#2}
}
\providecommand{\href}[2]{#2}

\end{document}